\def\rth2{{\mathbb{R}}^2}
\newcommand{\Z}{\mathbb{Z}}
\newcommand{\boB}{\mathcal{B}}
\def \R {{\Bbb R}}
\def \esf {{\Bbb S}}
\def \M {{\Bbb M}}
\def \H {{\Bbb H}}
\def \N {{\Bbb N}}
\def \D {{\Bbb D}}
\def \a {\alpha}
\def \be {\beta}
\def \g {\gamma}
\def \G {\Gamma}
\def \ve {\varepsilon}
\newcommand{\dis}{\displaystyle}
\newtheorem{theorem}{Theorem}[section]
\newtheorem{proposition}[theorem]{Proposition}
\newtheorem{definition}[theorem]{Definition}
\newtheorem{lemma}[theorem]{Lemma}
\newtheorem{corollary}[theorem]{Corollary}
\newtheorem{remark}[theorem]{Remark}
\newtheorem{claim}[theorem]{Claim}
\newcommand{\myskip}[1]{}
\DeclareMathOperator{\re}{Re}
\DeclareMathOperator{\im}{Im}
\begin{document}

\title{The Dirichlet problem for the minimal surface equation -with
possible infinite boundary data- over domains in a Riemannian surface}
\author{Laurent Mazet, M. Magdalena Rodr\'\i guez\thanks{Research
    partially supported a CNRS grant and a MEC/FEDER grant no.
    MTM2007-61775. The second author would like to thanks L'Institut
    de Math\'ematiques de Jussieu (UMR 7586) for its hospitality during
    the preparation of this manuscript.} and Harold Rosenberg}

\maketitle

%%%%%%%%%%%%%%%%%%%
%%%%%%%%%%%%%%%%%%%

\section{Introduction}\label{secintro}

%%%%%%%%%%%%%%%%%%%
%%%%%%%%%%%%%%%%%%%

In \cite{jes1}, Jenkins and Serrin considered bounded domains
$D\subset\R^2$, with $\partial D$ composed of straight line segments and
convex arcs. They found necessary and sufficient conditions on the lengths
of the sides of inscribed polygons, which guarantee the existence of a
minimal graph over $D$, taking certain prescribed values (in
$\R\cup\{\pm\infty\}$) on the components of $\partial D$

Perhaps the simplest example is $D$ a triangle and the boundary data is
zero on two sides and $+\infty$ on the third side. The conditions of
Jenkins-Serrin reduce to the triangle inequality  here and the solutions
exists. It was discovered by Scherk in 1835.

This also works on a parallelogram with sides of equal length. One
prescribes $+\infty$ on opposite sides and $-\infty$ on the other two
sides. This solution was also found by Scherk.

The theorem of Jenkins and Serrin also applies to some non-convex domains.
They only require $\partial D$ to be composed of a finite number of
convex arcs, together with their endpoints.

In a very interesting paper \cite{sp3}, Joel Spruck solved the Dirichlet
problem for the constant mean curvature $H$ equation over bounded domains
$D\subset \R^2$, with $\partial D$ composed of circle arcs of curvature
$\pm2H$, together with convex arcs of curvature larger than $2H$. The
boundary data now is $\pm\infty$ on the circle arcs and prescribed
continuous data on the convex arcs. He gave necessary and sufficient
conditions on the perimeter, and area, of inscribed polygons that
solve the Dirichlet problem.

In recent years there has been much activity on this Dirichlet problem
over domains $D$ contained in a Riemannian surface $\M$
\cite{pin1,you}. When $\M$ is the hyperbolic plane $\H^2$, there are
non-compact domains for which this problem has been solved, and interesting
applications have been obtained (see for example \cite{cor2,
hrs1, NeRo2}

In this paper we will extend the solution of this Dirichlet problem to
general domains. In the case of a Riemannian surface $\M$, we consider
non-convex domains (see Section \ref{secth1}). For $\M=\H^2$, we study
non-compact domains.

Our techniques for doing this in $\H^2$ are new (and apply to domains in
arbitrary $\M$). Previously one found a solution to the Dirichlet problem
by taking
limits of monotone sequences of solutions whose boundary data converges to
the prescribed data. A basic tool to make this work is the maximum
principle for solutions: if $u$ and $v$ are solutions and $u\le v$ on
$\partial D$, then $u\le v$ on $D$. However, there are domains for which
the maximum principle fails (we discuss this in Section
\ref{seccontreexemple}). In order to solve the Dirichlet problem in the
absence of a maximum principle we use the idea of divergence lines
introduced by Laurent Mazet in his thesis \cite{mazet0}. This enables us to
obtain
convergent subsequences of non-necessarily monotone sequences.

This lack of a general maximum principle implies that one no longer has
uniqueness (up to an additive constant, in the case of infinite boundary
data) for the solutions. In section \ref{secPrincMax}, we obtain uniqueness
theorems for certain domains and we give examples where this fails.

%%%%%%%%%%%%%%%%%%%
%%%%%%%%%%%%%%%%%%%

\section{Preliminaries}\label{secprel}

%%%%%%%%%%%%%%%%%%%
%%%%%%%%%%%%%%%%%%%

From now on, $\M$ will denote a Riemannian surface. In the following,
$\mbox{div}$, $\nabla$ and $|\cdot|$ are defined with respect to the metric
on $\M$. Let $\Omega$ be a domain in $\M$ and $u:\Omega\to \R$ be a smooth
function. We define $W_u=\sqrt{1+|\nabla u|^2}$. The graph of such a
smooth function $u$ that satisfies
$$
\mbox{div}\left(\frac{\nabla u}{W_u}\right)=0,
$$
is a minimal surface in $\M\times \R$; referred to as a \emph{minimal
graph}. In the following we will denote $X_u=\frac{\nabla u}{W_u}$.

The next results have been proven by Jenkins and Serrin~\cite{jes1}
for $\M=\R^2$, by Nelli and Rosenberg~\cite{NeRo2} when $\M=\H^2$, and
by Pinheiro~\cite{pin1} in the general setting. In fact, these results
were proven for bounded and geodesically convex domains in~\cite{pin1},
although their proofs remain valid in a more general setting.

\begin{theorem}[Compactness theorem]
  \label{th:compactness}
  % p�ginas 20,21,22,23 de Ana Lucia
  Let $\{u_n\}$ be a uniformly bounded sequence of minimal graphs in
  a bounded domain $\Omega\subset\M$. Then, there exists a subsequence
  of $\{u_n\}$ converging on compact subsets of $\Omega$ to a
  minimal graph $u$ on $\Omega$.
\end{theorem}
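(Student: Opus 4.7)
The plan is to reduce the statement to a standard Arzel\`a--Ascoli argument once a uniform interior $C^{2,\alpha}$ bound on $\{u_n\}$ is established over compact subsets of $\Omega$. Since the quasilinear equation $\mathrm{div}(X_u)=0$ degenerates only in the regime where $|\nabla u|\to\infty$, the heart of the matter is an interior gradient estimate of the form
\[
|\nabla u_n|(p)\;\le\;C\bigl(d(p,\partial\Omega),\ \sup_\Omega|u_n|,\ \M\bigr)
\]
holding on each compact $K\subset\Omega$. From that point on, the equation is uniformly elliptic on compact sets and classical regularity theory takes over.

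To prove the interior gradient estimate I see two essentially equivalent routes. The more analytic one is a Bernstein-type argument applied, with a suitable cut-off, to the logarithm of the angle function $W_{u_n}$ (or to $|\nabla u_n|^2$), using that the graph is a minimal surface in $\M\times\R$; in the Riemannian surface setting the only extra ingredient is to carry along the Gauss curvature of $\M$ in the Bochner-type identity, which is harmless on a fixed compact $K$. The more geometric alternative is to observe that the graph of $u_n$ is a stable minimal surface in $\M\times\R$ (being a graph over an open set), apply Schoen's curvature estimate for stable minimal surfaces to obtain a pointwise bound on the second fundamental form in the interior, and then convert this curvature bound into a gradient bound using that the graph lies in a slab of bounded height.

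Once $|\nabla u_n|$ is uniformly bounded on every $K\subset\subset\Omega$, the minimal surface equation becomes linear, uniformly elliptic, and in divergence form with coefficients depending smoothly on $\nabla u_n$. De Giorgi--Nash--Moser followed by Schauder estimates then furnish uniform $C^{k,\alpha}$ bounds on any $K'\subset\subset K$ for every $k$. Exhausting $\Omega$ by relatively compact open sets $\Omega_1\subset\subset\Omega_2\subset\subset\cdots$ with $\bigcup_j\Omega_j=\Omega$, applying Arzel\`a--Ascoli on each $\overline{\Omega_j}$, and extracting a diagonal subsequence produces a limit $u\in C^\infty(\Omega)$ with $\sup_\Omega|u|\le\sup_n\sup_\Omega|u_n|$. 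Passing to the $C^2$-limit in $\mathrm{div}(X_{u_n})=0$ shows that $u$ itself satisfies the minimal surface equation and is thus a minimal graph on $\Omega$.

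The main obstacle, unsurprisingly, is the interior gradient estimate in the Riemannian setting: one has to verify that the constant depends only on local geometric data on a neighborhood of $K$ (injectivity radius and curvature bounds there) together with $\sup|u_n|$, rather than on any global information about $\Omega$ or $\partial\Omega$. This is by now well documented in the references cited in the paragraph preceding the statement, so for the present purposes one simply appeals to it.
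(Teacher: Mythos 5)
Your outline is correct and is essentially the standard argument: the paper itself gives no proof of this theorem, instead citing Jenkins--Serrin, Nelli--Rosenberg and Pinheiro, and the route you describe (interior gradient estimate via stability/Schoen-type curvature bounds or a Bernstein-type argument, then uniform ellipticity, Schauder, and a diagonal Arzel\`a--Ascoli extraction) is exactly what those references carry out. No gap to report.
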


\begin{theorem}[Monotone convergence theorem]
  \label{th:monotone}
  Let $\{u_n\}$ be an increasing sequence of minimal graphs on a
  domain $\Omega\subset\M$.  There exists an open set ${\cal U}\subset
  \Omega$ (called the {\it convergence set}) such that $\{u_n\}$
  converges uniformly on compact subsets of ${\cal U}$ and diverges
  uniformly to $+\infty$ on compact subsets of ${\cal
    V}=\Omega-\overline{\cal U}$ ({\it divergence set}).  Moreover, if
  $\{u_n\}$ is bounded at a point $p\in \Omega$, then the
  convergence set ${\cal U}$ is non-empty (it contains a neighborhood
  of $p$).
\end{theorem}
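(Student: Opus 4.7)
My plan is to take the convergence set $\mathcal{U}$ to be the interior of the set where $\{u_n(p)\}$ is bounded, and $\mathcal{V} := \Omega\setminus\overline{\mathcal{U}}$. Since the sequence is monotone increasing, at every point $p\in\Omega$ the pointwise limit $u(p):=\lim_n u_n(p)\in\R\cup\{+\infty\}$ exists. Three things must be verified: (i) on $\mathcal{U}$ the sequence converges uniformly on compacts to a minimal graph; (ii) on compact subsets of $\mathcal{V}$ it diverges uniformly to $+\infty$; and (iii) (the ``moreover'' clause) the pointwise-bounded set $\{p : u(p)<\infty\}$ is already contained in $\mathcal{U}$.

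Part (i) is a direct application of the compactness theorem. On any compact $K\subset\mathcal{U}$, $\{u_n\}$ is uniformly bounded below by $u_1$ (continuous hence bounded on $K$) and uniformly bounded above by covering $K$ with finitely many neighborhoods on which the sequence is bounded by definition of $\mathcal{U}$. Theorem~\ref{th:compactness} then produces a subsequential minimal-graph limit on a neighborhood of $K$, and monotonicity upgrades this to convergence of the full sequence. Exhausting $\mathcal{U}$ by such $K$ gives the minimal graph $u$ as the uniform-on-compacts limit.

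The technical heart of the theorem is part (iii): \emph{if $u_n(p_0)\le M$ for all $n$, then $\{u_n\}$ is uniformly bounded on a neighborhood of $p_0$.} I plan to prove this via a Harnack-type inequality for the non-negative differences $v_n:=u_n-u_1$. A direct computation along the segment $u_s := (1-s)u_1 + s u_n$ yields $X_{u_n}-X_{u_1} = A_n \nabla v_n$ where $A_n = \int_0^1\bigl(W_{u_s}^{-1}I - W_{u_s}^{-3}\,\nabla u_s\otimes\nabla u_s\bigr)\,ds$ is symmetric and positive-definite. Since $\mathrm{div}(X_{u_n}-X_{u_1})=0$, the function $v_n$ satisfies a homogeneous divergence-form linear elliptic equation. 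Although the ellipticity of $A_n$ degenerates when $|\nabla u_s|$ is large, the special structure of the minimal surface equation permits the classical Harnack inequality for minimal-graph differences (used already in~\cite{jes1} and adapted to the Riemannian setting in~\cite{pin1}), yielding $\sup_{V}v_n\le C\inf_{V}v_n \le C\,v_n(p_0)$ on a small ball $V\ni p_0$, with $C$ depending only on the ambient geometry of a fixed outer ball. Uniformity in $n$ then gives $\sup_V u_n\le \sup_V u_1 + C(M-u_1(p_0))$, which proves (iii).

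Given (iii), part (ii) follows by contradiction: if $u_n\not\to\infty$ uniformly on some compact $K\subset\mathcal{V}$, then along a subsequence there are points $p_{n_k}\in K$ with $u_{n_k}(p_{n_k})\le M$; passing to a convergent subsequence $p_{n_k}\to p_\infty\in K$ and applying monotonicity ($u_j(p_{n_k})\le u_{n_k}(p_{n_k})\le M$ for $n_k\ge j$) together with continuity gives $u_j(p_\infty)\le M$ for every $j$, hence $u(p_\infty)\le M$. By (iii) this places $p_\infty\in\mathcal{U}$, contradicting $p_\infty\in K\subset\mathcal{V}$. The main obstacle is clearly the Harnack estimate of (iii): standard Moser--De~Giorgi theory does not apply directly because of the lack of uniform ellipticity, so one must exploit the geometric/divergence structure of minimal graph differences rather than rely on generic linear theory.
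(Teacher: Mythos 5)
Your overall architecture is the classical one and is sound: everything reduces to the local boundedness lemma (iii) (``bounded at a point $\Rightarrow$ uniformly bounded on a neighborhood''), and parts (i) and (ii) then follow from Theorem~\ref{th:compactness} and monotonicity as you say. (One organizational slip: in (i) you cover $K$ by ``neighborhoods on which the sequence is bounded by definition of $\mathcal{U}$'', but $\mathcal{U}$ being the interior of the \emph{pointwise} boundedness set only gives pointwise bounds on those neighborhoods; uniform bounds are exactly the content of (iii), so (iii) must come first.) The genuine gap is in (iii) itself. The inequality you assert, $\sup_{V}v_n\le C\inf_{V}v_n$ for the differences $v_n=u_n-u_1$ with $C$ depending only on the ambient geometry of a fixed outer ball, is false, and the divergence-line phenomenon that this paper is built around supplies counterexamples. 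Take an increasing sequence whose divergence set $\mathcal{V}$ is a nonempty polygonal domain bounded by an interior geodesic chord $L$ (Theorem~\ref{th:divergenceset}), and let $p_0$ lie in the convergence set at distance $\varepsilon$ from $L$, with $\varepsilon$ smaller than the radius of $V$. Then $\inf_V v_n\le v_n(p_0)$ stays bounded while $\sup_V v_n\to+\infty$ because $V$ meets $\mathcal{V}$; hence no Harnack constant depending only on the geometry of $V$ and an outer ball can exist. Your own computation explains why: the equation $\mathrm{div}(A_n\nabla v_n)=0$ has ellipticity ratio of order $1+|\nabla u_s|^2$, which is exactly what blows up on divergence lines, and no ``special structure'' of the difference equation restores a uniform Moser constant. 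Any correct local estimate must have the radius of the good neighborhood (or the constant) depend on the bound $M$ at $p_0$ and degenerate as $M\to+\infty$.

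The classical tool that actually carries this step — and what Jenkins--Serrin use — is not a Harnack inequality for differences but the interior gradient estimate for \emph{non-negative solutions} of the minimal surface equation (Finn in two variables, Bombieri--De Giorgi--Miranda, Korevaar--Spruck and Rosenberg--Schulze--Spruck in the Riemannian setting). Apply it to $w_n:=u_n-\inf_{B(p_0,d)}u_1\ge 0$, which is a genuine solution with $w_n(p_0)$ bounded by $M-\inf_{B(p_0,d)}u_1$; this yields $|\nabla u_n(p_0)|\le G(M,u_1,d)$ uniformly in $n$, where $d=\mathrm{dist}(p_0,\partial\Omega)$. One then concludes by the curvature-estimate/local-graph argument of Lemma~\ref{lem:conv_div}(1): a neighborhood of $(p_0,u_n(p_0))$ of size controlled by $d$ and $G$ in the graph of $u_n$ is a bounded graph over the (uniformly non-vertical) tangent plane, so $u_n\le M+C'$ on a ball of radius $\delta(G,d)>0$. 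With (iii) established this way, the rest of your proof goes through.
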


Now we recall some results which allow us to describe the divergence set
${\cal V}$ associated to a monotone sequence of minimal graphs.

\begin{lemma}[Straight line lemma]\label{lem:convexhull}
  Let $\Omega\subset\M$ be a domain, $C\subset\partial\Omega$ a convex
  compact arc, and $u\in{\cal C}^0(\Omega\cup C)$ a minimal graph on
$\Omega$.
  Denote by ${\cal C}(C)$ the (open) convex hull of $C$.
  \begin{itemize}
  \item[(i)] If $u$ is bounded above on $C$ and $C$ is strictly convex,
then $u$ is bounded above on $K\cap \Omega$, for every compact
set $K\subset{\cal C}(C)$. 
  \item[(ii)] If $u$ diverges to $+\infty$ or $-\infty$ as we approach
    $C$ within $\Omega$, then $C$ is a geodesic arc.
  \end{itemize}
\end{lemma}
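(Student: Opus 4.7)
Both parts rely on the fact that for a minimal graph $u$ the vector field $X_u=\nabla u/W_u$ is divergence-free and satisfies $|X_u|<1$ pointwise, with $|X_u|\to 1$ precisely where $|\nabla u|$ blows up. The workhorse is the divergence theorem $\int_{\partial D}\langle X_u,\eta\rangle\,ds=0$ combined with the universal bound $|\langle X_u,\eta\rangle|\le |X_u|<1$ for any unit conormal $\eta$.

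For part (ii), assume $u\to+\infty$ as we approach $C$ from within $\Omega$. My plan is first to show that along a curve $\gamma_\varepsilon\subset\Omega$ at geodesic distance $\varepsilon$ from $C$, $X_u$ tends uniformly to the unit normal of $\gamma_\varepsilon$ pointing toward $C$, with $|X_u|\to 1$; this follows from the blow-up hypothesis and standard gradient estimates in the non-parametric theory. Applying the divergence theorem to the thin strip $D_\varepsilon$ bounded by a subarc $C'\subset C$, the corresponding parallel arc $\gamma_\varepsilon'\subset\gamma_\varepsilon$, and two short geodesic transversals connecting their endpoints, one finds that the fluxes through $C'$ and $\gamma_\varepsilon'$ nearly cancel, the transversals contributing $O(\varepsilon)$. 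In the limit these fluxes equal $|C'|$ and $-|\gamma_\varepsilon'|$ respectively, so $|C'|-|\gamma_\varepsilon'|=O(\varepsilon)$. Comparing with the first-variation formula $|\gamma_\varepsilon'|=|C'|-\varepsilon\int_{C'}k_g\,ds+O(\varepsilon^2)$ and localizing to an arbitrary subarc $C'$ forces $\int_{C'}k_g\,ds=0$ for every $C'\subset C$, hence $k_g\equiv 0$ and $C$ is a geodesic.

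For part (i), my plan is a Scherk-type barrier argument exploiting strict convexity. Given a compact $K\subset\mathcal{C}(C)$, I would cover $K$ by finitely many small lens regions $L_i\subset\mathcal{C}(C)$, each bounded by a subarc $C_i\subset C$ and its geodesic chord $\sigma_i$, chosen so that $K$ stays at positive distance from each $\sigma_i$. Using known Jenkins--Serrin-type existence on convex lenses, I would construct on $L_i$ a minimal graph $v_i$ with $v_i=+\infty$ on $\sigma_i$ and $v_i\equiv M:=\sup_C u$ on $C_i$, finite in the interior of $L_i$. A comparison with $u$ on $L_i\cap\Omega$ would then give $u\le v_i$ there, producing a uniform bound on $K\cap\Omega$.

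The main obstacle lies in the comparison step of part (i): without a general maximum principle one cannot immediately conclude $u\le v_i$, since $\partial\Omega$ may enter $L_i$ in uncontrolled ways and the inequality on $\partial(L_i\cap\Omega)$ is only known on the portion lying in $C_i\cup\sigma_i$. Two remedies to try: (a) compare $u$ with $v_i+\delta$ on the truncated sublevel domains $\{u<\lambda\}\cap L_i$ and pass $\lambda\to+\infty$; or, more in the spirit of the paper, (b) deduce (i) from (ii) by contradiction: an unbounded sequence $p_n\in K\cap\Omega$ with $u(p_n)\to+\infty$ would, via Theorem~\ref{th:monotone} applied to translated subsequences $u-u(p_n)$ and the divergence-line machinery, produce an arc of $\partial\Omega\cap\overline{\mathcal{C}(C)}$ on which $u\to+\infty$; by part (ii) this arc is a geodesic in $\overline{\mathcal{C}(C)}$, and strict convexity of $C$ together with $u$ being bounded above on $C$ is what rules such an arc out near $p$. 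A secondary technical point in (ii) is to establish the uniform convergence $X_u\to$ conormal cleanly, which I would do by comparing $u$ with suitable one-dimensional Scherk barriers transverse to $C$.
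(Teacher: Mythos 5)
The paper does not actually prove this lemma: it is quoted from \cite{jes1}, \cite{NeRo2} and \cite{pin1}, so I am comparing your proposal against the standard argument those references use.

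For part (ii) your bookkeeping does not close. The divergence theorem on the strip $D_\varepsilon$ gives $F_u(C')+F_u(\gamma'_\varepsilon)=O(\varepsilon)$, and your statement that $F_u(\gamma'_\varepsilon)\to-|\gamma'_\varepsilon|$ is only an $o(1)$ statement as $\varepsilon\to 0$; since $|\gamma'_\varepsilon|\to|C'|$ trivially, the resulting identity $|C'|-|\gamma'_\varepsilon|=o(1)+O(\varepsilon)$ carries no information, and in particular it cannot detect the first-variation term $\varepsilon\int_{C'}k_g\,ds$, which is itself $O(\varepsilon)$ for \emph{every} value of $\int_{C'}k_g$. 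To extract the curvature you would need $F_u(\gamma'_\varepsilon)=-|\gamma'_\varepsilon|+o(\varepsilon)$, a quantitative rate that is not available (and certainly not uniform near the endpoints of $C'$). The standard proof avoids parallel curves entirely: take a short subarc $C'$ and its minimizing geodesic chord $\sigma'$; for $C'$ short the lens between them lies in $\Omega$, the divergence theorem gives $F_u(C')=-F_u(\sigma')$, the hypothesis $u\to+\infty$ gives $F_u(C')=|C'|$, and $|F_u(\sigma')|<|\sigma'|$ because $|X_u|<1$ on the interior chord (Lemma~\ref{lem:flux}); since $|\sigma'|\le|C'|$ this forces $|C'|<|C'|$ unless the lens is empty, i.e.\ unless $C'=\sigma'$ is a geodesic. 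Note also that the one genuinely analytic input -- $F_u(C')=|C'|$, equivalently your ``$X_u\to\nu$ uniformly'' -- is precisely what you defer to ``standard gradient estimates'': those bound $|\nabla u|$ but say nothing about its direction, and Lemma~\ref{lem:flux}$(iii)$ is stated only for geodesic arcs, so this step requires the level-set flux argument of Jenkins--Serrin and cannot be waved through.

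Part (i) is a plan with an acknowledged hole rather than a proof. You correctly identify that the comparison $u\le v_i$ cannot be run because $\partial\Omega$ may intrude into the lens $L_i$, but neither remedy is carried out and neither works as stated. In remedy (a), on the free boundary $\{u=\lambda\}$ of the truncated domain you would need $\lambda\le v_i+\delta$, which fails near $C_i$ where $v_i=M$, so the truncation does not restore the maximum principle. Remedy (b) applies Theorem~\ref{th:monotone} to the non-monotone sequence $u-u(p_n)$ (that theorem requires monotonicity) and invokes the divergence-line machinery, which is developed in Section~4 only for $\H^2$ and for sequences of graphs, not for a single solution on a general $\M$; and even granting an arc of $\partial\Omega$ inside $\overline{{\cal C}(C)}$ on which $u\to+\infty$, the final step ``strict convexity rules such an arc out'' is asserted, not argued. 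A complete proof of (i) has to produce a barrier whose comparison with $u$ uses only the data on $C$ and the finiteness of $u$ at interior points -- this is where the strict convexity of $C$ (through the strict inequality $|\sigma'|<|C'|$, which makes the Scherk-type barrier over the lens exist) genuinely enters, and it is the part of the lemma your proposal leaves open.
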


\begin{definition}
Let $u$ be a minimal graph on a domain $\Omega\subset\M$ and assume that
$\partial \Omega$ is arcwise smooth. When $C$ is an arc in $\Omega$ and
$\nu$ is a unit normal to $C$ in $\M$ we define the \emph{flux} of $u$
across $C$ for such choice of $\nu$ by 
$$
F_u(C)=\int_C\langle X_u,\nu\rangle ds,
$$
where $ds$ is the arc length of $C$. Since the vector field $X_u$ is
bounded
and has vanishing divergence, the flux is also defined across a curve
$\Gamma\subset \partial\Omega$, in that case, $\nu$ is chosen to be the
outer normal to $\partial\Omega$.
\end{definition}

In the paper, when a flux is computed across a curve $C$, the curve $C$
will be always seen as part of the boundary of a subdomain. The normal
$\nu$ will then be chosen as the outer normal to the subdomain.

\begin{lemma}\label{lem:flux}
  Let $u$ be a minimal graph on a domain $\Omega\subset\M$.
  \begin{itemize}
  \item[(i)] For every compact bounded domain $\Omega'\subset\Omega$,
    we have $F_u(\partial\Omega')=0$.
  \item[(ii)] Let $C$ be a piecewise smooth interior curve or a convex
    curve in $\partial\Omega$ where $u$ extends continuously and takes
    finite values.  Then $|F_u(C)|<|C|$.
  \item[(iii)] Let $T\subset\partial\Omega$ be a geodesic arc such
    that $u$ diverges to $+\infty$ (resp $-\infty$) as one approaches
    $T$ within $\Omega$. Then $F_u(T)=|T|$ (resp. $F_u(T)=-|T|$).
  \end{itemize}
\end{lemma}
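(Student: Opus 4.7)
The three parts require rather different arguments, handled in increasing order of difficulty.

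Part (i) is immediate from the divergence theorem: the minimal graph equation says exactly that $X_u$ is smooth and divergence-free on $\Omega$, so for any compact $\Omega' \subset \Omega$,
$$F_u(\partial\Omega') \;=\; \int_{\Omega'}\mbox{div}(X_u)\, dA \;=\; 0.$$

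For part (ii), the starting point is the pointwise bound $|X_u|^2 = |\nabla u|^2/(1+|\nabla u|^2) < 1$ on $\Omega$; Cauchy--Schwarz then gives $|\langle X_u,\nu\rangle|\le|X_u|$. When $C$ is a compact interior curve, $X_u$ is continuous on $C$ and its modulus attains some maximum $M<1$, so $|F_u(C)|\le M|C|<|C|$. When $C\subset\partial\Omega$ is a convex boundary arc on which $u$ extends continuously with finite values, I would approximate $C$ by a family of interior curves $C_\delta$ such that $C$, $C_\delta$ and two short transverse arcs bound a thin strip $R_\delta$; applying part (i) to $R_\delta$ and the interior case to $C_\delta$, then letting $\delta\to0$, yields $|F_u(C)|\le|C|$. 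The strict inequality is recovered because equality would force $|\nabla u|\equiv+\infty$ along $C$, contradicting a boundary gradient estimate for minimal graphs over convex arcs with continuous finite data.

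Part (iii) is the main obstacle. The geometric intuition is that since $u\to+\infty$ on the geodesic $T$, the tangent planes to the graph become vertical along $T\times\R$, so that $X_u\to\nu$. To formalize this I would use Fermi coordinates $(t,s)$ near $T$, with $t$ an arclength parameter along $T$ and $s\ge0$ the signed distance into $\Omega$; set $T_\delta=\{s=\delta\}$ and let $R_\delta$ be the thin strip bounded by $T$, $T_\delta$, and two transverse geodesic segments $A_\delta$, $B_\delta$ of length $\delta$. Applying part (i) to $R_\delta$ gives
$$F_u(T)+F_u(T_\delta)+F_u(A_\delta)+F_u(B_\delta)=0,$$
where all fluxes use the outer normals to $R_\delta$ (so at $T$ the outer normal of $R_\delta$ coincides with $\nu$, while at $T_\delta$ it is $\partial_s$). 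Part (ii) bounds $|F_u(A_\delta)|,|F_u(B_\delta)|\le\delta$ and $|F_u(T_\delta)|<|T_\delta|\to|T|$, which already yields $|F_u(T)|\le|T|$.

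The matching lower bound, which I expect to be the hardest step, requires showing $\langle X_u,\partial_s\rangle\to-1$ uniformly in $t$ as $s\to0^+$; then $F_u(T_\delta)\to-|T|$, and the identity above forces $F_u(T)=|T|$. My plan for this uniform convergence is a blow-up/compactness argument: interior curvature estimates for minimal surfaces in $\M\times\R$ let me extract subsequential limits of suitable vertical translates of the graph near a point of $T$, and the hypothesis that $T$ is a geodesic (together with Lemma~\ref{lem:convexhull}(ii) applied to any such limit) ensures that the only possibility is the vertical half-plane over $T$, forcing the tangent plane to be vertical and hence $X_u$ to align with $\nu$. The case $u\to-\infty$ is handled by applying the $+\infty$ case to $-u$.
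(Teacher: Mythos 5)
The paper never proves this lemma: it is imported wholesale from Jenkins--Serrin, Nelli--Rosenberg and Pinheiro, so there is no internal argument to compare yours with, and I can only judge the proposal on its own terms. Part (i) and the interior case of (ii) are correct.

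The genuine gap is the strict inequality in (ii) for a boundary arc $C$ with continuous finite data. Your limiting argument only yields $|F_u(C)|\le|C|$, and the route you propose to strictness --- ``equality forces $|\nabla u|\equiv+\infty$ along $C$, contradicting a boundary gradient estimate'' --- fails twice over. No boundary gradient estimate holds for merely continuous data: already over a straight segment, continuous non-Lipschitz boundary values produce a minimal graph whose gradient blows up at the boundary, so an infinite gradient on $C$ is not a contradiction; and even granting $\langle\nabla u,\nu\rangle\to+\infty$, this is perfectly compatible with $u$ having finite continuous boundary values. The step from equality to a contradiction is the actual content of the statement and is missing. The mechanism that works is different: equality $F_u(C)=|C|$ propagates to every subarc $C'$ (each satisfies $F_u(C')\le|C'|$ and they sum to $|C|$). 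If some subarc $C'$ is not geodesic, compare it with its geodesic chord $\Gamma'$: the flux balance on the lens between them gives $|C'|=|F_u(\Gamma')|<|\Gamma'|\le|C'|$, a contradiction, because $|X_u|<1$ pointwise on the interior arc $\Gamma'$. If $C$ is geodesic, one must show that $F_u(C)=|C|$ forces $u\to+\infty$ on $C$ --- this is exactly the paper's Lemma~\ref{lem:xulf}, proved by comparison with a Scherk-type graph and a Collin--Krust flux computation --- contradicting finiteness. Neither step is a gradient estimate.

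For (iii) your plan is viable and is essentially the mechanism the paper uses for divergence lines in Lemma~\ref{lem:conv_div}, but you have omitted the one observation that makes it legitimate: a curvature estimate at $(q,u(q))$ needs a lower bound on the distance to the \emph{boundary of the graph}, and the naive bound $\mathrm{dist}(q,\partial\Omega)$ degenerates as $q\to T$. The point is that, precisely because $u\to+\infty$ on $T$, the graph has no boundary above $T$, so for $q$ near an interior point of $T$ the relevant distance is to the graph over $\partial\Omega\setminus T$ and stays bounded below. This yields vertical limit planes (necessarily over $T$, since the limits lie over $\overline\Omega$ and a vertical plane over any other geodesic through a point of $T$ would leave $\overline\Omega\times\R$ --- not by Lemma~\ref{lem:convexhull}(ii), which concerns the original graph and does not apply to the limit), uniformly only on compact subsets of the interior of $T$; the endpoints are then absorbed using $|\langle X_u,\nu\rangle|\le1$ on arcs of small length. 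With that supplied, your flux bookkeeping for (iii) is correct.
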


\begin{remark}\label{rem:corner}
  From Lemma~\ref{lem:flux} and the triangle inequality, we deduce
  that, if $u:\Omega\to\R$ is a minimal graph and
  $T_1,T_2\subset\partial\Omega$ are two geodesics where $u$ diverges
  to $+\infty$ as we approach them, then $T_1,T_2$ cannot meet at a
  strictly convex corner (strictly convex with respect to $\Omega$).
\end{remark}

% The last item in the previous lemma is obtained from Assertion 4.9
%  of~\cite{pin1} together with the following lemma.

The last statement in Lemma~\ref{lem:flux} admits the following
generalization.

\begin{lemma}\label{lem:limitflux}
  For each $n\in\N$, let $u_n$ be a minimal graph on a fixed domain
  $\Omega\subset\M$ which extends continuously to $\overline\Omega$,
  and let $T$ be a geodesic arc in~$\partial\Omega$.
  \begin{itemize}
  \item[(i)] If $\{u_n\}$ diverges uniformly to $+\infty$ on compact
    sets of $T$ while remaining uniformly bounded on compact sets of
    $\Omega$, then $F_{u_n}(T)\to|T|$.
  \item[(ii)] If $\{u_n\}$ diverges uniformly to $+\infty$ on
    compact sets of $\Omega$ while remaining uniformly bounded on compact
    sets of $T$, then $F_{u_n}(T)\to-|T|$.
  \end{itemize}
\end{lemma}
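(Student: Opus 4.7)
The plan is to flux-decompose $F_{u_n}(T)$ on a thin strip of $\Omega$ along $T$, pass to the limit as the strip shrinks, and control the interior via Theorem~\ref{th:compactness} and the boundary behavior of a subsequential limit via a local Scherk-type barrier. Fix a compact subarc $T'\subset T$ (the conclusion for $T$ follows by exhaustion) and choose Fermi coordinates $(s,t)$ along $T'$, with $T'\subset\{t=0\}$ and $\Omega$ locally equal to $\{t>0\}$. For $\alpha>0$ small the subdomain $\Omega_\alpha=\{(s,t):s\in(s_1,s_2),\,0<t<\alpha\}$ is bounded by $T'$, an interior arc $\tau_\alpha$ at $t=\alpha$, and two sides of length~$\alpha$. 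Applying Lemma~\ref{lem:flux}(i) to $u_n$ on~$\Omega_\alpha$ (with outer normals) gives
\begin{equation*}
F_{u_n}(T')=-F_{u_n}(\tau_\alpha)-F_{u_n}(\mathrm{sides}),
\end{equation*}
with $|F_{u_n}(\mathrm{sides})|\le 2\alpha$ since $|X_{u_n}|\le 1$.

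For part~(i), extract via Theorem~\ref{th:compactness} a subsequence of $\{u_n\}$ converging smoothly on compact subsets of $\Omega$ to a minimal graph $u$; in particular $F_{u_n}(\tau_\alpha)\to F_u(\tau_\alpha)$ for each fixed $\alpha$. Once I know that $u$ itself diverges to $+\infty$ as one approaches $T$, Lemma~\ref{lem:flux}(iii) applied to $u$ gives $F_u(T)=|T|$, and the same strip identity applied to $u$ on the sub-rectangle $\{\beta<t<\alpha\}$, sent $\beta\to0$, yields $F_u(\tau_\alpha)\to-|T'|$ as $\alpha\to0$. Combining with the side bound and the elementary ``every subsequence has a further subsequence with the same limit'' argument yields $F_{u_n}(T')\to|T'|$, and hence $F_{u_n}(T)\to|T|$.

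The crucial step is the transfer of boundary divergence from $u_n$ to the limit $u$. For each $P\in T$ and each $M>0$ I construct, in a sufficiently small geodesic half-disk $D_P$ around $P$, a Scherk-type minimal graph $v_M$ with $v_M=+\infty$ on $D_P\cap T$ and $v_M=0$ on $\partial D_P\cap\Omega$; such a solution exists because at small scales $D_P$ is nearly Euclidean and the Jenkins--Serrin length condition is satisfied. Since $u_n\ge M$ on $D_P\cap T$ for $n$ large and $u_n\ge-K$ on the compact set $\partial D_P\cap\Omega$ (uniformly in~$n$), the maximum principle on the bounded convex domain $D_P$ yields $u_n\ge v_M-K$ there, so in the limit $u\ge v_M-K$, and letting $M\to\infty$ forces $u\to+\infty$ at~$T$.

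Part~(ii) uses the same strip decomposition, but now $u_n$ is bounded on $T$ and blows up on $\tau_\alpha$, so no interior limit exists. Instead I build a Scherk-type barrier $w_M$ on $\Omega_\alpha$ (or a sub-rectangle) with $w_M=+\infty$ on $\tau_\alpha$, $w_M=0$ on~$T$ and on the sides; the maximum principle yields $u_n\ge w_M+\inf_Tu_n-C$ inside, controlling the outer normal derivative of $u_n$ at $T$ from below in absolute value and forcing $\langle X_{u_n},\nu\rangle\to-1$ uniformly on $T'$ as $M\to\infty$ and $n\to\infty$. Feeding this into the strip identity yields $F_{u_n}(T)\to-|T|$. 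The main obstacle in both parts is the Scherk-type barrier construction, which in a general Riemannian surface~$\M$ requires verifying the Jenkins--Serrin compatibility condition in small geodesic disks; all remaining steps are routine applications of Theorem~\ref{th:compactness} and the flux identities of Lemma~\ref{lem:flux}.
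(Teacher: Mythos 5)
The paper does not actually prove this lemma --- it is quoted from \cite{jes1}, \cite{NeRo2} and \cite{pin1} --- so your argument has to stand on its own. Its skeleton is the standard one and is fine: the strip identity $F_{u_n}(T')=-F_{u_n}(\tau_\alpha)-F_{u_n}(\mathrm{sides})$ with $|F_{u_n}(\mathrm{sides})|\le 2\alpha$, the extraction of an interior limit $u$, the reduction of (i) to the statement that $u\to+\infty$ at $T$ (so that Lemma~\ref{lem:flux}(iii) gives $F_u(T')=|T'|$ and $-F_u(\tau_\alpha)\ge|T'|-2\alpha$), and the subsequence bookkeeping are all correct. The genuine gap is in the barrier step, which is exactly where the content of the lemma lies. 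Your comparison $u_n\ge v_M-K$ on $D_P$ needs $u_n\ge -K$ \emph{uniformly in $n$} on $\partial D_P\cap\Omega$, and you justify this by calling that semicircle ``the compact set $\partial D_P\cap\Omega$''. It is not compact in $\Omega$: its two endpoints lie on $T$. The hypotheses give uniform bounds only on compact subsets of $\Omega$ and uniform divergence only on compact subsets of $T$; they say nothing, a priori, about $u_n$ at interior points whose distance to $T$ tends to $0$ with $n$, so nothing you have written excludes $u_n$ dipping to $-n$ arbitrarily close to $T'$. The same unproven uniform bound is needed on the lateral sides of $\Omega_\alpha$ in part (ii). The standard repair is to first establish a uniform lower bound on a full one-sided neighborhood of a compact subarc of $T$, for instance by comparing $u_n$ on a thin geodesic quadrilateral over $T'$ with a Scherk-type graph equal to $-\infty$ on the two short lateral geodesic sides and to $0$ on the remaining two sides: the $-\infty$ data absorbs the uncontrolled lateral boundary, and the other two sides are precisely a compact subarc of $T$ and a curve compactly contained in $\Omega$, where the hypotheses do apply. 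Without some such step, part (i) is incomplete.

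Two further points. Part (ii) as written does not work: from the one-sided inequality $u_n\ge w_M+\inf_Tu_n-C$ in $\Omega_\alpha$, with no equality of $u_n$ and the barrier on $T'$, you cannot deduce anything about $\partial u_n/\partial\nu$ on $T'$ --- a Hopf-type comparison of normal derivatives, or the flux comparison of Lemma 2 of \cite{ck1} used elsewhere in this paper, requires the two functions to coincide on the arc in question. Part (ii) needs a different mechanism; after the strip identity it amounts to showing that the outward flux of $u_n$ across the interior curve $\tau_\alpha$ tends to $|\tau_\alpha|$, which is in essence the divergence-line computation (convergence of the downward normals to a horizontal vector along a curve separating a region where $u_n\to+\infty$ from one where it is bounded), not a one-sided barrier bound. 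Finally, two smaller defects: your barriers are produced by invoking Theorem~\ref{th1}, which is circular inside this paper since the second and third cases of its proof use Lemma~\ref{lem:limitflux} (the small half-disk and thin-quadrilateral Scherk graphs must be constructed independently, as in \cite{pin1}); and the definitions of $v_M$ and $w_M$ are garbled --- a graph cannot simultaneously equal $+\infty$ on an arc and vary with $M$; you presumably mean boundary data $M$ there, increasing to the ideal Scherk graph.
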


The following result is adapted to the situation of the next section. The
boundary of a domain $\Omega$ is finitely piecewise smooth and locally
convex if it is composed of a finite number of open smooth arcs which are
convex towards $\Omega$, together with their endpoints. These endpoints
are called
the vertices of $\Omega$. 

\begin{theorem}[Divergence set theorem]
  \label{th:divergenceset}
Let $\Omega\subset\M$ be a bounded domain with finitely piecewise smooth
and locally convex boundary. Let $\{u_n\}$ be an increasing (resp.
decreasing) sequence of minimal graphs on $\Omega$. For every open smooth
arc $C\subset\partial\Omega$, we assume that, for every $n$, $u_n$ extend
continuously on $C$ and either $u_n|_C$ converges to a continuous function
or $u_n|_C\nearrow+\infty$ (resp. $u_n|C\searrow-\infty$). Let ${\cal V}$
be the divergence set associated to  $\{u_n\}$ 
  \begin{enumerate}
  \item The boundary of ${\cal V}$ consists of a finite set of
    non-intersecting interior geodesic chords in $\Omega$ joining two
    vertices of $\partial\Omega$, together with geodesics in
    $\Omega$.
    \item A component of ${\cal V}$ cannot only consist of an isolated
      point nor an interior chord.
    \item No two interior chords in $\partial{\cal V}$ can have a
      common endpoint at a convex corner of ${\cal V}$.
  \end{enumerate}
\end{theorem}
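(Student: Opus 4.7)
The plan is to treat the three assertions in order, relying on the straight-line lemma (Lemma~\ref{lem:convexhull}) and on careful flux bookkeeping via Lemmas~\ref{lem:flux} and~\ref{lem:limitflux}. The unifying first step is to show that the monotone limit $u$ of $\{u_n\}$ on $\mathcal{U}$ diverges to $+\infty$ as one approaches any point $p\in\partial\mathcal{V}\cap\Omega$ from within $\mathcal{U}$. Indeed, if $u_n(p)$ stayed bounded at some such $p$, Theorem~\ref{th:monotone} applied at $p$ would place a neighborhood of $p$ in $\mathcal{U}$, contradicting $p\in\partial\mathcal{V}$. A Dini-type argument for monotone continuous functions upgrades this pointwise divergence to uniform divergence of $u_n$ on compact subsets of every interior arc of $\partial\mathcal{V}$, which is precisely the input needed for Lemma~\ref{lem:limitflux}.

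For assertion~(1), I would first show that each interior arc of $\partial\mathcal{V}$ is a geodesic by adapting Lemma~\ref{lem:convexhull}(ii) to $u$ on the adjacent $\mathcal{U}$-component (after checking, by a local barrier argument, that the arc is $C^1$ and locally convex toward $\mathcal{U}$). Arcs of $\partial\mathcal{V}$ that lie in $\partial\Omega$ are geodesics by direct application of that lemma to the individual $u_n$. A chord endpoint cannot lie in the interior of a smooth arc $A\subset\partial\Omega$: for strictly convex $A$, Lemma~\ref{lem:convexhull}(i) applied to $u_n|_A$ (with its finite convergent boundary data) would force $u$ bounded on the $\mathcal{V}$-sector adjacent to the endpoint, contradicting the established blowup; the geodesic case is handled analogously by flux. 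Transverse intersections of two chords inside $\Omega$ are excluded by a flux computation around the intersection together with Lemma~\ref{lem:limitflux}, and finiteness follows since $\partial\Omega$ has finitely many vertices and the chords are non-intersecting.

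For assertion~(2), an isolated-point component $\{p\}$ of $\overline{\mathcal{V}}$ admits a punctured neighborhood contained in $\mathcal{U}$ on which $\{u_n\}$ is uniformly bounded on compacta; Theorem~\ref{th:monotone} applied at a nearby point then places $p$ itself in $\mathcal{U}$, a contradiction. For an interior chord $T$ comprising an entire component of $\overline{\mathcal{V}}$, both sides of $T$ lie in $\mathcal{U}$; I would choose thin geodesic strips $D^{\pm}\subset\mathcal{U}$ flanking $T$ with $T$ as a boundary arc. Applying Lemma~\ref{lem:limitflux}(i) on each strip (uniform divergence on $T$, uniform boundedness inside $D^{\pm}$) gives $F_{u_n}(T)\to|T|$ both with the outer normal to $D^+$ and with the outer normal to $D^-$. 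Since these two normals are opposite, the same sequence converges to both $|T|$ and $-|T|$, forcing $|T|=0$ and contradicting the non-degeneracy of $T$.

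Assertion~(3) will be the main obstacle and I expect it to require the most delicate flux bookkeeping. Suppose $T_1,T_2\subset\partial\mathcal{V}$ are interior chords meeting at a strictly convex corner $q$ of $\mathcal{V}$ with interior angle $\alpha<\pi$. Form a small geodesic triangle $P\subset\mathcal{V}$ with vertex $q$, two sides of equal length along $T_1,T_2$, and closing geodesic side $\gamma$. By Lemma~\ref{lem:flux}(i),
\[
F_{u_n}(T_1)+F_{u_n}(T_2)+F_{u_n}(\gamma)=0,
\]
all normals outward to $P$. On each $T_i$, the outward normal to $P$ is opposite to the outward normal of the adjacent $\mathcal{U}$-component, so Lemma~\ref{lem:limitflux}(i) applied on the $\mathcal{U}$-side yields $F_{u_n}(T_i)\to-|T_i|$ on $\partial P$. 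Since $|F_{u_n}(\gamma)|\le|\gamma|$ by Lemma~\ref{lem:flux}(ii), letting $n\to\infty$ gives $|T_1|+|T_2|\le|\gamma|$, in direct contradiction with the strict geodesic triangle inequality $|\gamma|<|T_1|+|T_2|$ valid for the non-degenerate triangle $P$ (here $\alpha<\pi$ is crucial). The delicate points are keeping track of normal orientations and ensuring uniform divergence on compact subsets of each $T_i$, which is what makes Lemma~\ref{lem:limitflux} applicable.
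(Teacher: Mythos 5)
A preliminary remark: the paper itself does not prove Theorem~\ref{th:divergenceset}; it is quoted from Jenkins--Serrin, Nelli--Rosenberg and Pinheiro, so there is no in-paper proof to compare against. Judged on its own terms, your arguments for assertions (2) and (3) are essentially the classical ones and your orientation bookkeeping is correct: the two-sided application of Lemma~\ref{lem:limitflux}(i) across a chord flanked by $\mathcal{U}$ on both sides, and the small-triangle flux computation at a convex corner combined with the strict triangle inequality, are exactly the standard proofs (the latter mirrors the paper's own Remark~\ref{rem:corner}).

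The genuine gap is in assertion (1), which is where the substance of the theorem lies, and on which everything else depends. Lemma~\ref{lem:flux}(iii) and Lemma~\ref{lem:limitflux} are stated only for \emph{geodesic} arcs, and Lemma~\ref{lem:convexhull}(ii) requires a compact convex arc sitting in the boundary of the domain of a single minimal graph; so all of your flux bookkeeping presupposes that $\partial\mathcal{V}\cap\Omega$ is already known to be a locally finite union of sufficiently regular arcs. You defer precisely this point --- that each interior piece of $\partial\mathcal{V}$ is a $C^1$ arc, locally convex toward $\mathcal{U}$ --- to an unspecified ``local barrier argument''; but a priori $\partial\mathcal{V}\cap\Omega$ is just a relatively closed set, and constructing barriers requires knowing the local geometry of $\partial\mathcal{V}$, which is the very thing to be proved. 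The established route runs in the opposite order: one first shows, by a local blow-up/curvature-estimate argument (as in Lemmas~\ref{lem:conv_div} and~\ref{lem:continuation} of this paper, or by Jenkins--Serrin's flux analysis of the level sets $\{u_n>c\}$), that through every degenerate point there passes a complete geodesic chord contained in the degenerate set and running to $\partial\Omega$; only then do your steps (2) and (3) become available. Relatedly, you never exclude a chord terminating at an interior point of $\Omega$ (this needs the continuation argument), and your claim that an arc of $\partial\mathcal{V}$ lying in $\partial\Omega$ is a geodesic ``by direct application of Lemma~\ref{lem:convexhull}(ii) to the individual $u_n$'' does not parse: each $u_n$ is finite on such an arc, so part (ii) does not apply, and in the case $u_n|_C\nearrow+\infty$ a strictly convex $C$ can indeed lie in $\partial\mathcal{V}$ without being geodesic (take $u_n\equiv n$ on a disk), so the statement must be read as describing only $\partial\mathcal{V}\cap\Omega$.
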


\begin{theorem}[Maximum principle for bounded domains]\label{th:max}
  %p�ginas 4,5,6 del art�culo de Ana Lucia
  Let $\Omega\subset\M$ be a bounded domain, and $E\subset\partial\Omega$ a
  finite set of points. Suppose that $\partial\Omega\backslash E$
consists of smooth
  arcs $C_k$, and let $u_1,u_2$ be minimal graphs on $\Omega$
  which extend continuously to each $C_k$. If $u_1\leq u_2$ on
  $\partial\Omega \backslash E$, then $u_1\leq u_2$ on~$\Omega$.
\end{theorem}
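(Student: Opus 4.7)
The plan is to argue by contradiction. Assume $u_1(p)>u_2(p)$ at some $p\in\Omega$ and fix $\epsilon>0$ so that $\Omega_\epsilon:=\{x\in\Omega : u_1(x)>u_2(x)+\epsilon\}$ is nonempty. By the continuous extension to each $C_k$ and the hypothesis $u_1\leq u_2$ there, we have $\overline{\Omega_\epsilon}\cap\partial\Omega\subset E$. Consider the smooth, divergence-free vector field $Y:=X_{u_1}-X_{u_2}$ on $\Omega$; it is bounded by $|Y|<2$ since $|X_{u_i}|<1$. The algebraic engine is the pointwise inequality $\langle Y,\nabla(u_1-u_2)\rangle\geq 0$, with equality if and only if $\nabla u_1=\nabla u_2$, which follows from the strict convexity of $\xi\mapsto\sqrt{1+|\xi|^2}$ (equivalently, the strict monotonicity of $\xi\mapsto\xi/\sqrt{1+|\xi|^2}$).

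The main technical step addresses the possible blow-up of $u_1-u_2$ near $E$ by a double cutoff. For $M>0$ set $h_M:=\min\bigl((u_1-u_2-\epsilon)^+,\,M\bigr)$, and for small $\delta>0$ let $\eta_\delta$ be Lipschitz with $\eta_\delta=0$ on $B_\delta(E)$, $\eta_\delta=1$ off $B_{2\delta}(E)$, and $|\nabla\eta_\delta|\leq C/\delta$. The product $h_M\eta_\delta$ is Lipschitz with compact support in $\Omega$ because $h_M$ vanishes outside $\overline{\Omega_\epsilon}$, whose intersection with $\partial\Omega$ lies in $E\subset B_\delta(E)$. Since $\mbox{div}(Y)=0$, integration by parts gives
\begin{equation*}
0 \;=\; \int_\Omega\mbox{div}\bigl(h_M\eta_\delta\,Y\bigr)\,dA \;=\; \int_\Omega \eta_\delta\langle Y,\nabla h_M\rangle\,dA \;+\; \int_\Omega h_M\langle Y,\nabla\eta_\delta\rangle\,dA.
\end{equation*}
The second term is $O(M\delta)$ since $|h_M|\leq M$, $|Y|<2$, $|\nabla\eta_\delta|\leq C/\delta$ and the support of $\nabla\eta_\delta$ has area $O(\delta^2)$. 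Sending $\delta\to 0$ by monotone convergence yields $\int_\Omega\langle Y,\nabla h_M\rangle\,dA=0$; nonnegativity of the integrand then forces $\nabla u_1=\nabla u_2$ almost everywhere on $\{0<(u_1-u_2-\epsilon)^+<M\}$, and letting $M\to\infty$ gives $\nabla u_1\equiv\nabla u_2$ throughout $\Omega_\epsilon$.

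Consequently $u_1-u_2$ is constant, equal to some $c_K>\epsilon$, on each connected component $K$ of $\Omega_\epsilon$. If $\partial K\cap\Omega\neq\emptyset$, continuity of $u_1-u_2$ at a point $q$ there forces simultaneously $(u_1-u_2)(q)\leq\epsilon$ and $(u_1-u_2)(q)=c_K>\epsilon$, which is impossible. Otherwise $\partial K\subset E$, so $K$ is both open and closed in the connected domain $\Omega$, forcing $K=\Omega$ and contradicting $u_1\leq u_2$ on the arcs $C_k$. Since $\epsilon>0$ was arbitrary, $u_1\leq u_2$ on $\Omega$. The key obstacle, addressed by the double cutoff and the order $\delta\to 0$ before $M\to\infty$, is that $u_1$ and $u_2$ may fail to be bounded (or to extend continuously) near $E$, so one cannot directly apply the divergence theorem to $(u_1-u_2-\epsilon)^+\,Y$ over $\Omega_\epsilon$.
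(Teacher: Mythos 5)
The paper does not actually prove Theorem~\ref{th:max}: it is quoted as a known preliminary result from Jenkins--Serrin, Nelli--Rosenberg and Pinheiro. Your argument is correct and is essentially the standard proof from those sources: the divergence-free field $Y=X_{u_1}-X_{u_2}$, the strict monotonicity inequality $\langle X_{u_1}-X_{u_2},\nabla(u_1-u_2)\rangle\geq 0$ with equality only where the gradients coincide, and a truncation-plus-cutoff to neutralize the finitely many exceptional boundary points; the same flux mechanism reappears in the paper itself in Lemma~\ref{lem:xulf} and Proposition~\ref{colkru}, following Collin--Krust. The two delicate points --- that $\overline{\Omega_\epsilon}$ meets $\partial\Omega$ only in $E$, so that $h_M\eta_\delta$ really is Lipschitz with compact support in $\Omega$, and that the truncation at height $M$ must be in place before sending $\delta\to0$ because $u_1-u_2$ may be unbounded near $E$ --- are both identified and handled correctly, as is the endgame ruling out a component of $\Omega_\epsilon$ with boundary contained in $E$ via connectedness of $\Omega$.
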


\begin{theorem}[Boundary values lemma]
  Let $\Omega\subset\M$ be a domain and let $C$ be a compact convex
  arc in $\partial\Omega$. Suppose $\{u_n\}$ is a sequence of
  minimal graphs on $\Omega$ converging uniformly on compact subsets
  of $\Omega$ to a minimal graph $u:\Omega\to\R$. Assume each $u_n$ is
  continuous in $\Omega\cup C$ and $\{u_n|_C\}$ converges uniformly
  to a function $f$ on $C$. Then $u$ is continuous in $\Omega\cup C$
  and $u|_C=f$.
\end{theorem}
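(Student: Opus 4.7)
The plan is to establish boundary continuity of $u$ at each $p\in C$ via local barriers and the maximum principle. Fix $p\in C$ and $\varepsilon>0$; it suffices to show $|u(q)-f(p)|\le \varepsilon$ for $q\in\Omega$ sufficiently close to $p$. Since $f=\lim_n u_n|_C$ is continuous on $C$ and $u_n|_C\to f$ uniformly, one can choose a subarc $C_p\subset C$ containing $p$ and an integer $N$ so that $|u_n-f(p)|\le \varepsilon/2$ on $C_p$ for all $n\ge N$. Using the convexity of $C$ at $p$, select a small subdomain $D\subset\Omega$ with $\partial D=C_p\cup\gamma$, where $\gamma$ is a short smooth (say geodesic) arc in $\Omega$ joining the two endpoints of $C_p$; such a $\gamma$ lies in $\Omega$ for $C_p$ small enough because $\Omega$ sits locally on the convex side of $C$. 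Since $\gamma$ is a compact subset of $\Omega$ and $u_n\to u$ uniformly on compact subsets, the sequence $\{u_n\}$ is uniformly bounded on $\gamma$; fix a constant $M$ larger than that bound.

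Next, construct minimal-graph barriers $w^{\pm}\colon D\to\R$, continuous on $\overline D$ except possibly at the two endpoints of $C_p$, with
\[
w^{+}|_{C_p}=f(p)+\tfrac{\varepsilon}{2},\quad w^{+}|_\gamma=M,\qquad w^{-}|_{C_p}=f(p)-\tfrac{\varepsilon}{2},\quad w^{-}|_\gamma=-M.
\]
Existence of such $w^{\pm}$ on a sufficiently small $D$ follows from the classical solvability of the Dirichlet problem for the minimal graph equation over small convex regions with piecewise continuous boundary data (Jenkins--Serrin in $\R^2$, extended to the Riemannian setting by Nelli--Rosenberg and Pinheiro, as cited in the introduction). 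This barrier construction is the main obstacle of the argument: one must know that the Dirichlet problem is solvable on small convex pieces of $\M$ with the prescribed large values on the interior arc and the nearly constant values on the convex arc $C_p$. Once it is granted, the remainder is routine.

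Having $w^{\pm}$, apply Theorem~\ref{th:max} on $D$ with exceptional set $E$ consisting of the two endpoints of $C_p$: since $w^{-}\le u_n\le w^{+}$ on $\partial D\setminus E$ for every $n\ge N$ (by the choice of $C_p$, $N$, and $M$), the same inequality holds throughout $D$. Letting $n\to\infty$ gives $w^{-}\le u\le w^{+}$ on $D$. Because $w^{\pm}$ are continuous at $p$ with $w^{\pm}(p)=f(p)\pm\varepsilon/2$, one concludes $|u(q)-f(p)|\le\varepsilon$ for $q\in\Omega$ close enough to $p$. Since $\varepsilon>0$ was arbitrary, $u(q)\to f(p)$ as $q\to p$ in $\Omega$, so $u$ extends continuously to $C$ with $u|_C=f$. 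If $p$ happens to be an endpoint of $C$, the same argument applies with $C_p$ chosen as a one-sided subarc of $C$ with endpoint $p$.
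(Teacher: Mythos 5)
The paper does not actually prove this lemma; it is quoted from \cite{jes1}, \cite{NeRo2} and \cite{pin1}, and your barrier--plus--comparison argument is essentially the proof given in those references. For points $p$ in the relative interior of $C$ your argument is correct, up to one small repair: when $C$ is a geodesic arc (which is allowed, since ``convex'' here includes geodesic), the geodesic chord joining the endpoints of $C_p$ is $C_p$ itself and does not lie in $\Omega$. You should instead take $\gamma$ to be a short arc bowing into $\Omega$ and convex towards $D$ (a circular arc, say), so that both boundary arcs of $D$ are convex towards $D$; then the first case of Theorem~\ref{th1} (Perron process with barriers at convex boundary points) produces $w^{\pm}$, and these are continuous at $p$ because $p$ is an interior point of the convex arc $C_p$ carrying constant data. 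With that, the application of Theorem~\ref{th:max} with $E$ the two corners of $D$, and the passage to the limit, are fine.

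The genuine problem is your last sentence. At an endpoint $p$ of $C$ the ``same argument'' does not apply: the closing arc $\gamma$ of $D$ must itself terminate at $p\in\partial\Omega$, so it is no longer a compact subset of $\Omega$ and the uniform bound $M$ on $u_n|_\gamma$ is unavailable; moreover $p$ becomes a corner of $D$ at which $w^{\pm}$ need not be continuous. This is not a fixable technicality --- the conclusion itself fails at endpoints. Take $\Omega$ the open upper half-disk in $\R^2$, $C$ the closed diameter, and let $u_n$ be the minimal graph with boundary data $0$ on $C$ and, on the semicircle, equal to $1$ except on the arcs of length $1/n$ adjacent to the corners, where it interpolates continuously down to $0$. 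Each $u_n$ is continuous on $\overline\Omega$ with $u_n|_C\equiv 0$, and $\{u_n\}$ increases to the bounded minimal graph $u$ with data $0$ on the open diameter and $1$ on the open semicircle; since $u$ extends continuously to the open semicircle with value $1$, one has $\limsup_{q\to(1,0)}u(q)=1\neq 0=f\bigl((1,0)\bigr)$. So the lemma must be read, as in \cite{jes1}, as a statement about the relative interior of $C$ --- which is also the only form in which the paper uses it --- and the endpoint claim should simply be deleted from your proof.
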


%%%%%%%%%%%%%%%%%%%
%%%%%%%%%%%%%%%%%%%

\section{A general Jenkins-Serrin theorem on $\M\times\R$}
\label{secth1}

%%%%%%%%%%%%%%%%%%%
%%%%%%%%%%%%%%%%%%%

Let $\Omega\subset\M$ be a bounded domain whose boundary consists of a
finite number of open geodesic arcs
$A_1,\cdots,A_{k_1},B_1,\cdots,B_{k_1}$ and a finite number of open
convex arcs $C_1,\cdots,C_{k_3}$ (convex towards $\Omega$), together
with their endpoints.  We mark the $A_i$ edges by $+\infty$, the $B_i$
edges by $-\infty$, and assign arbitrary continuous data $f_i$ on the
arcs $C_i$.

\begin{definition}
  {\rm We define a {\it solution for the Dirichlet problem on
      $\Omega$} as a minimal graph $u:\Omega\to\R$ which assumes the
    above prescribed boundary values on $\partial\Omega$.}
\end{definition}

Our aim in this section is to solve this Dirichlet problem on
$\Omega$. We assume that no two $A_i$ edges and no two $B_i$ edges
meet at a convex corner (see Remark~\ref{rem:corner}). When $\Omega$
is geodesically convex, this was done in~\cite{pin1}; in general we
need another condition on the $\partial\Omega$. We assume the
following technical condition is satisfied:
\begin{quote}
\begin{itemize}
\item[(C1)] If $\{C_i\}_i=\emptyset$, then neither
  $\cup_{i=1}^{k_1}\overline{A_i}$ nor
  $\cup_{i=1}^{k_2}\overline{B_i}$ is a connected subset of
  $\partial\Omega$.
\end{itemize}
\end{quote}

We will say that a domain $\Omega$ as above is a \textit{Scherk domain}.
We notice that the hypothesis (C1) implies that $k_1\ge 2$ and
$k_2\ge2$ when $\{C_i\}_i=\emptyset$. We remark that (C1) is always
satisfied when $\M=\R^2,\H^2$.

Condition (C1) is not necessary for the existence of a solution to the
Dirichlet problem on $\Omega$ (see Remark~\ref{rem:esf}) but we need to
assume this for our proof.

\begin{claim}\label{cl:cond1}
In particular, condition \mbox{\rm (C1)} holds when there exists a
component $\Gamma$ of $\partial\Omega$ and a strongly
geodesically convex\footnote{A set $D\subset\M$ is said to be
\emph{strongly geodesically convex} when, for every $p,q\in \overline{D}$,
there exists a unique length-minimizing geodesic arc $\gamma$ in $\M$
joining $p,q$ and $\gamma\subset \overline{D}$; moreover, $\gamma$ is the
only geodesic arc in $\overline{D}$ joining $p,q$.} domain
$\Omega'\subset\M$ containing $\Omega$ such that $\partial\Omega'=\Gamma$.
\end{claim}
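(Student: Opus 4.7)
I will prove (C1) by contradiction. Suppose $\{C_i\}_i=\emptyset$ and, say, $\bigcup_{i=1}^{k_1}\overline{A_i}$ is a connected subset of $\partial\Omega$; the argument for $\bigcup\overline{B_i}$ is symmetric. Since distinct components of $\partial\Omega$ are pairwise disjoint closed curves, connectedness forces every $\overline{A_i}$ to lie in a single component of $\partial\Omega$, which is either $\Gamma$ itself or some other component $\Gamma'\neq\Gamma$.

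\textbf{Geometry of $\Gamma$.} Two facts about $\Gamma=\partial\Omega'$ will follow from the strong geodesic convexity of $\Omega'$. Since $\Omega\subseteq\Omega'$ share $\Gamma$ as a boundary component and the remaining components of $\partial\Omega$ lie at positive distance from $\Gamma$, for a sufficiently small neighborhood $U$ of $\Gamma$ one has $\Omega\cap U=\Omega'\cap U$, so convexity of a corner of $\Gamma$ toward $\Omega$ is the same as toward $\Omega'$. First, every corner of $\Gamma$ is convex toward $\Omega$: a reflex corner of $\partial\Omega'$ would allow two nearby points on the two meeting arcs to be joined by a length-minimizing geodesic of $\M$ that exits $\overline{\Omega'}$, contradicting strong convexity. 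Second, $\Gamma$ must have at least three corners: otherwise I could pick two distinct points on $\Gamma$ such that the two complementary arcs of $\Gamma$ between them are two distinct geodesic arcs (each arc of $\Gamma$ is geodesic, since $\{C_i\}_i=\emptyset$) contained in $\overline{\Omega'}$ and joining those points, contradicting the uniqueness clause of strong geodesic convexity.

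\textbf{Alternation and contradiction.} Because every corner of $\Gamma$ is convex and the Section~\ref{secth1} hypothesis forbids two $A_i$'s or two $B_i$'s from meeting at a convex corner, the arcs along $\Gamma$ must alternate strictly between $A$-type and $B$-type. A cyclic alternating sequence has even length, so combined with ``at least three corners'' we get at least four arcs on $\Gamma$, with at least two of each type, interleaved. If all $A_i\subseteq\Gamma$, the interleaving splits $\bigcup\overline{A_i}$ into at least two components, contradicting connectedness. If instead all $A_i\subseteq\Gamma'\neq\Gamma$, then $\Gamma$ contains no $A$-arc and hence only $B$-arcs (since $\{C_i\}_i=\emptyset$), making each of its $\geq 3$ convex corners a meeting point of two $B$-arcs, again contradicting the Section~\ref{secth1} hypothesis. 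Either way we reach a contradiction, so (C1) holds.

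\textbf{Main obstacle.} The delicate step is the ``at least three corners'' assertion for $\Gamma$, which relies specifically on the \emph{uniqueness} clause of strong geodesic convexity --- that the minimizing geodesic is the \emph{only} geodesic arc in $\overline{\Omega'}$ joining its endpoints --- rather than on mere existence of minimizers inside $\overline{\Omega'}$. Without it, a bigon $\Gamma$ consisting of one $A$-arc and one $B$-arc meeting at two convex corners would not be excluded, and (C1) would fail since $\bigcup\overline{A_i}$ would then be a single connected arc. Exploiting this uniqueness is therefore the crucial ingredient of the proof.
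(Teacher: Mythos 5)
Your proof is correct and follows essentially the same route as the paper's: strong convexity of $\Omega'$ forces the corners of $\Gamma$ to be convex (hence the $A$/$B$ edges alternate), and the uniqueness clause of strong geodesic convexity rules out $\Gamma$ being a closed geodesic, a geodesic loop, or a bigon, so $\Gamma$ carries at least two interleaved edges of each type and (C1) follows. You are somewhat more explicit than the paper about justifying the alternation and about the case where all the $A_i$ would lie on a boundary component other than $\Gamma$, but the substance is identical.
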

\begin{proof}
Suppose $\{C_i\}_i=\emptyset$. Since $\Gamma$ is the boundary of
$\Omega'$ and~$\overline{\Omega'}$ is geodesically convex, we can
rename the $A_i,B_i$ edges so that $\Gamma=A_1$ or $\Gamma=B_1$ or
$\Gamma=A_1\cup B_1\cup\cdots\cup A_k\cup B_k$ (cyclically ordered). The
first two cases are not allowed: in fact, in that cases $A_1$ or $B_1$
would be closed and two points on it would be joined by two geodesic arcs
in $\Gamma\subset\overline{\Omega'}$.

In the third case, we have $k\ge 2$. If $k=1$, the common endpoints of
$A_1$ and $B_1$ are joined by two geodesic arcs, $A_1$ and $B_1$, in
$\overline{\Omega'}$ which is impossible. Thus $k\ge 2$ and (C1) holds.
\end{proof}

A polygonal domain ${\cal P}$ is said to be {\it inscribed
  in $\Omega$} when ${\cal P}\subset\Omega$ and its vertices are drawn
from the set of endpoints of the
$A_i,B_i,C_i$ edges.  Given a polygonal domain ${\cal P}$ inscribed in $\Omega$,
we denote by $\gamma$ the perimeter of $\partial{\cal P}$, and by $\alpha$
(resp.~$\beta$) the total length of the edges $A_i$ (resp. $B_i$)
lying in $\partial{\cal P}$.

\begin{theorem}
\label{th1}
Let $\Omega$ be a Scherk domain. If the family $\{C_i\}_i$ is non-empty,
there exists a solution to the Dirichlet problem on $\Omega$ if and only if
\begin{equation}
  \label{hipJS}
  2\a<\g\qquad\mbox{and }\qquad 2\be<\g
\end{equation}
for every polygonal domain ${\cal P}$ inscribed in $\Omega$. Moreover,
such a solution is unique, if it exists.

When $\{C_i\}_i$ is empty, there is a solution to the Dirichlet
problem for $\Omega$ if and only if $\alpha=\beta$ when ${\cal P}=\Omega$, and
inequalities in (\ref{hipJS}) hold for all other polygonal domains
inscribed in $\Omega$. Such a solution is unique up to an additive
constant, if it exists.
\end{theorem}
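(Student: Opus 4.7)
\emph{Necessity.} Given a solution $u$ and any inscribed polygon $\mathcal{P}$, Lemma~\ref{lem:flux}(i) gives $F_u(\partial\mathcal{P})=0$. The $A$-edges in $\partial\mathcal{P}$ contribute $+\alpha$ and the $B$-edges contribute $-\beta$ by Lemma~\ref{lem:flux}(iii), while the remaining arcs (of total length $\gamma-\alpha-\beta$, on which $u$ is finite) contribute a quantity of absolute value strictly smaller than $\gamma-\alpha-\beta$ by Lemma~\ref{lem:flux}(ii). This gives $|\alpha-\beta|<\gamma-\alpha-\beta$, equivalent to~\eqref{hipJS}, unless the remainder is empty; the latter forces $\mathcal{P}=\Omega$ and $\{C_i\}=\emptyset$, from which one reads off $\alpha=\beta$.

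\emph{Construction of a solution.} For each $n$ I would produce a minimal graph $u_n$ on $\Omega$ with boundary values $n$ on each $A_i$, $-n$ on each $B_i$ and $f_i$ on each $C_i$ (interpolated continuously across the corners), obtained by exhausting $\Omega$ by strongly geodesically convex subdomains (where Pinheiro's theorem applies) and taking limits. The maximum principle on those subdomains (Theorem~\ref{th:max}) makes $\{u_n\}$ monotone increasing in $n$, so Theorem~\ref{th:monotone} yields a convergence set $\mathcal{U}$ and a divergence set $\mathcal{V}$. Once $\mathcal{V}=\emptyset$ is established, Theorem~\ref{th:compactness} produces the desired minimal graph $u$ on $\Omega$; the inequality $u\ge u_n=n$ on $A_i$ forces $u\to+\infty$ there (symmetrically on $B_i$), and the Boundary values lemma gives $u=f_i$ on each $C_i$.

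\emph{The main obstacle is showing $\mathcal{V}=\emptyset$,} which is where~\eqref{hipJS} enters the proof. Suppose for contradiction that some component $P$ of $\mathcal{V}$ is nonempty. By Theorem~\ref{th:divergenceset}, $\partial P$ is a polygon whose sides are either interior geodesic chords of $\Omega$ joining vertices of $\partial\Omega$ or arcs of $\partial\Omega$. The straight line lemma eliminates $C_i$-arcs from $\partial P$ (a $C_i$-arc in $\partial P$ would be both strictly convex and a location where $u_n\to\infty$ from the $P$-side, contradicting continuity of each $u_n$), and Remark~\ref{rem:corner} together with a direct flux estimate eliminates $B_i$-arcs. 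So $\partial P$ contains only interior chords and $A$-arcs. Applying Lemma~\ref{lem:limitflux} to suitable $n$-dependent translations of $u_n$ (e.g.~$u_n-n$ near the $A$-arcs and $u_n-u_n(p_0)$ for a reference point $p_0\in\mathcal{U}$ near each interior chord) gives $F_{u_n}(T)\to|T|$ on each $A$-arc and $F_{u_n}(T)\to-|T|$ on each interior chord (with outer normal from $P$). Passing to the limit in $F_{u_n}(\partial P)=0$ yields $\alpha'=\ell$, where $\alpha'$ and $\ell$ are the total lengths of $A$-arcs and interior chords in $\partial P$; this forces $\gamma'=2\alpha'$, contradicting the strict inequality $2\alpha'<\gamma'$ in~\eqref{hipJS} applied to $P$. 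The degenerate case $P=\Omega$ can occur only when $\{C_i\}=\emptyset$, and condition (C1) then excludes it by forcing $\partial\Omega$ to contain $B$-arcs. A symmetric argument with a decreasing approximating sequence rules out a set where $u_n\to-\infty$. The principal technical difficulty is the careful translation bookkeeping behind the flux limits and the case analysis on the combinatorics of $\partial P$.

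\emph{Uniqueness.} For two solutions $u$ and $v$ I would use a Collin--Krust flux argument: $X_u-X_v$ is divergence-free and $\langle X_u-X_v,\nabla(u-v)\rangle\ge 0$, with equality only when $\nabla u=\nabla v$. The flux of $X_u-X_v$ vanishes across each $A_i$ and each $B_i$ (the fluxes of $u$ and $v$ agree by Lemma~\ref{lem:flux}(iii)), and on each $C_i$ one has $u-v=0$. Integrating $(u-v)\,\mathrm{div}(X_u-X_v)$ over an appropriate exhaustion of $\Omega$ and transferring derivatives via the divergence theorem makes the boundary term vanish, forcing $\nabla(u-v)\equiv 0$; hence $u-v$ is constant, and the constant is zero as soon as some $C_i$ is present, giving the two statements of the theorem.
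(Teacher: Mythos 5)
Your necessity argument and your uniqueness sketch are essentially the paper's (the paper delegates uniqueness to Pinheiro, which is the same Jenkins--Serrin/Collin--Krust flux computation you describe). The problem is in the existence part, where your proof has a genuine gap: you assert that the maximum principle makes the sequence $\{u_n\}$ with boundary values $n$ on the $A_i$, $-n$ on the $B_i$ and $f_i$ on the $C_i$ monotone increasing. It does not. The boundary data increase on the $A_i$ but \emph{decrease} on the $B_i$ (from $-n$ to $-(n+1)$), so Theorem~\ref{th:max} gives no comparison between $u_n$ and $u_{n+1}$, and the sequence is not monotone in either direction as soon as both families are nonempty. Everything downstream of that claim collapses: Theorem~\ref{th:monotone} and Theorem~\ref{th:divergenceset}, which you use to define and analyze $\mathcal{V}$, are stated only for monotone sequences. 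This is not a technicality one can wave away --- the entire case structure of the paper's proof exists to circumvent exactly this. The paper first treats the one-sided problems ($\{B_i\}=\emptyset$ with $f_i$ bounded below, and symmetrically), where the approximating sequence genuinely is monotone and your divergence-set/flux contradiction applies; it then handles $\{C_i\}\neq\emptyset$ by sandwiching the non-monotone $u_n$ between the one-sided solutions $u^-\le u_n\le u^+$ and invoking the compactness theorem rather than monotone convergence; and it handles $\{C_i\}=\emptyset$ by a further renormalization $u_n=v_n-\mu_n$ (with $\mu_n$ chosen via the connectivity of sublevel sets, using condition (C1)) together with auxiliary barriers $u_i^{\pm}$, since otherwise there is no way to pin down the additive constant and keep the sequence bounded on any compact set. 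Your proposal contains none of this, and in particular gives no mechanism at all for extracting a convergent subsequence in the two cases where both $+\infty$ and $-\infty$ data are present.

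A secondary but real defect is the construction of $u_n$ itself by ``exhausting $\Omega$ by strongly geodesically convex subdomains.'' A Scherk domain need not be convex (the paper stresses this in Remark~\ref{rem1}), and a non-convex domain is not an increasing union of convex subdomains whose boundaries approach $\partial\Omega$, so the prescribed data on the concave-side arcs cannot be transferred through such an exhaustion. The paper instead solves the finite-continuous-data problem on the non-convex $\Omega$ directly by the Perron process, using solvability on small geodesic disks and local barriers at each boundary point (each boundary arc being convex \emph{toward} $\Omega$), and only then builds the infinite-data solutions on top of that.
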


\begin{remark}\label{rem1}\mbox{}
  \begin{enumerate}
    \item The Scherk domain $\Omega$ need not be convex, even when
      there are no $A_i$ and $B_j$ edges. There are no conditions in
      the latter case; the solution need not be continuous at the
      vertices. 
  \item Theorem~\ref{th1} corresponds to Theorem 4 in~\cite{jes1}, in
    the case $\M=\R^2$.
  \item Theorem~\ref{th1} has been proven, when $\Omega$ is a
    geodesically convex domain, by Nelli and Rosenberg~\cite{NeRo2}
    (in the case $\M=\H^2$) and by Pinheiro~\cite{pin1}.
  \end{enumerate}
\end{remark}

\begin{proof}
  The uniqueness part in Theorem~\ref{th1} can be proven exactly as
  in~\cite{pin1}.
    Let us now prove the conditions of Theorem~\ref{th1} are necessary
for existence.
  Suppose there is a minimal graph $u$ solving the Dirichlet
problem. 
  When $\{C_i\}_i=\emptyset$ and ${\cal P}=\Omega$, using
  Lemma~\ref{lem:flux} we have
  \[
  \textstyle{
    \alpha=\sum_i|A_i|=\sum_i F_u(A_i)=-\sum_i F_u(B_i)=\sum_i|B_i|=\beta ,}
  \]
  as we wanted to prove. In the other case, again by Lemma~\ref{lem:flux},
  we obtain:
  \begin{itemize}
  \item $\sum_{A_i\subset\partial{\cal P}} F_u(A_i)+
    \sum_{B_i\subset\partial{\cal P}} F_u(B_i)+
    F_u(\partial{\cal P}-\cup_i A_i-\cup_i B_i)=0$.
  \item $\sum_{A_i\subset\partial{\cal P}} F_u(A_i)=
    \sum_{A_i\subset\partial{\cal P}} |A_i|=\alpha$.
  \item $ \sum_{B_i\subset\partial{\cal P}} F_u(B_i)=
    -\sum_{B_i\subset\partial{\cal P}} |B_i|=-\beta$.
  \item $ |F_u(\partial{\cal P}-\cup_i A_i- \cup_i B_i)|<
    \gamma-\alpha-\beta$.
    \end{itemize}
    From all this, $|\alpha-\beta|<\gamma-\alpha-\beta$, so
    $2\alpha<\gamma$ and $2\beta<\gamma$, as desired.\\

  Finally, let us prove the conditions are sufficient. We distinguish the
  following cases:\\

    \noindent {\bf $\star$ First case: Suppose that the families
    $\{A_i\}_i,\{B_i\}_i$ are both empty. }\\
  In this case, Theorem~\ref{th1} is proven, exactly as in~\cite{jes1}
  for $\M=\R^2$, by means of the Perron process (see~\cite{gt1,jes1}),
  using the fact that the solution to the Dirichlet problem exists for
  small geodesic disks~\cite{pin1} and a standard barrier argument (a
  barrier exists at every convex boundary point, see~\cite{pin1}).\\

  \noindent
  {\bf $\star$ Second case: Suppose $\{B_i\}_i=\emptyset$
    and each $f_i$ is bounded below. }\\
  Using the previous step, there exists, for every $n\in\N$, a unique
  minimal graph ${u_n:\Omega\to\R}$ such that:
  \[
  \left\{\begin{array}{lllll}
      u_n=n      & \mbox{, on the } A_i \mbox{ edges.}\\
      u_n=\min\{n,f_i\} & \mbox{, on the } C_i \mbox{ edges.}\\
    \end{array}\right.
  \]
  From the maximum principle for bounded domains (Theorem~\ref{th:max}), we
deduce
  that $\{u_n\}$ is a non-decreasing sequence.  Thus
  Lemma~\ref{lem:convexhull} and Theorem~\ref{th:divergenceset} assure
  that, if it is non-empty, the divergence set ${\cal V}$ of
  $\{u_n\}$ consists of a finite number of polygonal domains
  inscribed in $\Omega$. Assume that ${\cal V}$ is connected
  (otherwise, we will similarly argue on each component of ${\cal
    V}$).  By Lemma~\ref{lem:flux}, the flux of $u_n$ along $\partial{\cal
V}$ vanishes; this is,
$$
  \sum_{A_i\subset\partial{\cal V}} F_{u_n}(A_i)+ F_{u_n}(\partial{\cal V}-
  \cup_i A_i)=0 .
$$
  On the other hand,
  Lemma~\ref{lem:limitflux} says that $F_{u_n}(\partial{\cal V}-\cup_i
  A_i)\rightarrow -(\gamma-\alpha)$ as $n\rightarrow +\infty$.  Since
$\sum_{A_i\subset\partial{\cal V}}
  |F_{u_n}(A_i)|\leq\alpha$, we obtain $2\alpha-\gamma\geq 0$, which
  contradicts~(\ref{hipJS}). Hence ${\cal V}=\emptyset$, and
  $\{u_n\}$ converges uniformly on compact sets of $\Omega$ to a
  minimal graph $u:\Omega\to\R$. The desired boundary
  conditions for $u$ are obtained from standard barrier arguments.\\

  Theorem~\ref{th1} can be proven analogously when $\{A_i\}_i$ is
  empty and each $f_i$ is bounded above.

  \noindent
  {\bf $\star$ Third case: Suppose
    $\{C_i\}_i\neq\emptyset$. }\\
  By the previous step, there exist (unique) minimal graphs $u^+,
  u^-, u_n:\Omega\to\R$ with the following boundary values:
  \[
  \left\{\begin{array}{lllll}
      u^+=+\infty &  ,\ u^-=0       & \mbox{and} & u_n=n      & \mbox{, on the } A_i \mbox{ edges,}\\
      u^+=0       &  ,\ u^-=-\infty & \mbox{and} & u_n=-n     & \mbox{, on the } B_i\mbox{ edges,}\\
      u^+=f_i^+   &  ,\ u^-=f_i^-   & \mbox{and} & u_n=f_{i,n} & \mbox{, on the } C_i \mbox{ edges,}\\
    \end{array}\right.
  \]
  where $f_i^+=\max\{0,f_i\}$, $f_i^-=\min\{0,f_i\}$ and $f_{i,n}$
  denotes the function $f_i$ truncated above and below by $n$ and
  $-n$, respectively.  By Theorem~\ref{th:max}, $u^-\leq u_n\leq u^+$,
for every
  $n$. Using the compactness theorem (Theorem~\ref{th:compactness})
  and a diagonal process we can extract a subsequence of $\{u_n\}$
  which converges on compact sets of $\Omega$ to a minimal graph $u$.
  The desired boundary conditions for $u$ are obtained from
  standard barrier arguments.  \\

  \noindent
  {\bf $\star$ Fourth case: Suppose $\{C_i\}_i=\emptyset$. }\\
  From the first case, we know there exists for each $n\in\N$ a
  minimal graph $v_n:\Omega\to\R$ such that
  \[
  \left\{\begin{array}{ll}
      v_n=n     & \mbox{, on the } A_i \mbox{ edges.}\\
      v_n=0     & \mbox{, on the } B_i \mbox{ edges.}\\
    \end{array}\right.
  \]
  And the maximum principle implies that $0\le
v_n\le n$. 
  For every $c\in(0,n)$, we define
  \[
  E_c=\{p\in D\ |\ v_n(p)>c\} ,\quad F_c=\{p\in D\ |\ v_n(p)<c\} ,
  \]
  and denote by $E_c^i$ (resp. $F_c^i$) the component of $E_c$ (resp.
  $F_c$) whose closure contains the edge $A_i$ (resp. $B_i$). From the
  maximum principle for bounded domains, we can deduce $E_c=\cup_i E_c^i$
and
  $F_c=\cup_i F_c^i$.
  
  Condition (C1) ensures that the set $F_c$ (resp. $E_c$) is
  disconnected for $c=\varepsilon$ (resp.  $c= n-\varepsilon$), with
  $\varepsilon>0$ small enough. On the other hand, $F_c$ is connected
  when $c=n-\varepsilon$ for $\varepsilon>0$ small enough, so we can
  define
  \[
  \mu_n=\inf\{c\in(0,n)\ |\ \mbox{ the set } F_c \mbox{ is
    connected}\},
  \]
  and $u_n=v_n-\mu_n$.

  In order to prove that a subsequence of $\{u_n\}$ converges, let
  us consider the auxiliary functions
  \[
  u^+=\max_i\{u_i^+\}\, , \qquad u^-=\min_i\{u_i^-\}\, ,
  \]
  where $u_i^+,u_i^-:\Omega\to\R$ are the unique
  minimal graphs given by
\[
  \begin{array}{lcr}
    \left\{\begin{array}{ll}
        u_i^+=+\infty      & \mbox{, on } \cup_{i'\neq i} A_{i'}\\
        u_i^+=0     & \mbox{, on }  (\cup_j B_j)\cup A_i\\
      \end{array}\right.
    & \mbox{}\hspace{1cm} &
    \left\{\begin{array}{ll}
      u_i^-=-\infty      & \mbox{, on } \cup_{i'\neq i} B_{i'}\\
      u_i^-=0     & \mbox{, on } (\cup_j A_j)\cup B_i\\
    \end{array}\right.
  \end{array}
  \]
  (such functions $u_i^+,u_i^-$ exist thanks to the second case
  studied previously).

  Observe that, by definition of $\mu_n$, both $E_{\mu_n},F_{\mu_n}$ are
  disconnected.  In particular, for every $i_1$, there exists a $i_2$
  such that $E_{\mu_n}^{i_1}\cap E_{\mu_n}^{i_2}=\emptyset$, and we
  obtain, applying the maximum principle,
  \[
  0\leq u_n|_{E_{\mu_n}^{i_1}}\leq u_{i_2}^+|_{E_{\mu_n}^{i_1}} .
  \]
  Similarly, for every $j_1$, there exists a $j_2$ such that
  $F_{\mu_n}^{j_1}\cap F_{\mu_n}^{j_2}=\emptyset$, and
  \[
  u_{j_2}^-|_{F_{\mu_n}^{j_1}} \leq u_n|_{F_{\mu_n}^{j_1}}\leq 0.
  \]
  From this it is not very difficult to prove that $u^-\leq u_n\leq
  u^+$. Hence, the compactness theorem ensures that a subsequence of
  $\{u_n\}$ converges uniformly on compact subsets of $\Omega$ to a
  minimal graph~$u$. Let us check that $u$ satisfies the desired
  boundary conditions.

  Suppose that, after passing to a subsequence, $\{\mu_n\}$
  converges to some $\mu_\infty<+\infty$. Hence, $u=-\mu_\infty$ on
  each $B_i$ and $u$ diverges to $+\infty$ when we approach $A_i$
  within $\Omega$. From Lemma~\ref{lem:flux}, we get
  \[
  \textstyle{
    \sum_i F_u(A_i)+\sum_i F_u(B_i)=F_u(\partial\Omega)=0 ,}
  \]
  \[
  \textstyle{
    \sum_i F_u(A_i)=\alpha \qquad\mbox{and} \qquad
    |\sum_i F_u(B_i)|<\beta,}
  \]
  which contradicts the assumption $\alpha=\beta$. Thus the whole
  sequence $\{\mu_n\}$ diverges to $+\infty$.  Analogously, we can
  prove that $n-\mu_n\to+\infty$ as $n\to+\infty$, and
  Theorem~\ref{th1} is proven.
\end{proof}

\begin{remark}\label{rem:esf}
  The following example shows condition (C1) is not necessary:
  Consider a hemisphere $\Omega_0\subset\esf^2$ and a geodesic
  triangle $T_ 1\subset\Omega_0$. By Theorem~\ref{th1}, there exists a
  minimal graph on $\Omega_0-T_1$ with boundary data $0$ on
  $\partial\Omega_0$ and $+\infty$ on $\partial T_1$ (up to its
  vertices). Considering the $\pi$- rotation about $\partial\Omega_0$,
  we get a minimal graph defined on the sphere with two geodesic
  triangles $T_1,T_2$ removed which has boundary data $+\infty$ on the
  edges of $\partial T_1$ and $-\infty$ on the edges of $\partial
  T_2$, see Figure~\ref{fig:esf}.
\end{remark}

\begin{figure}\begin{center}
\epsfysize=6cm \epsffile{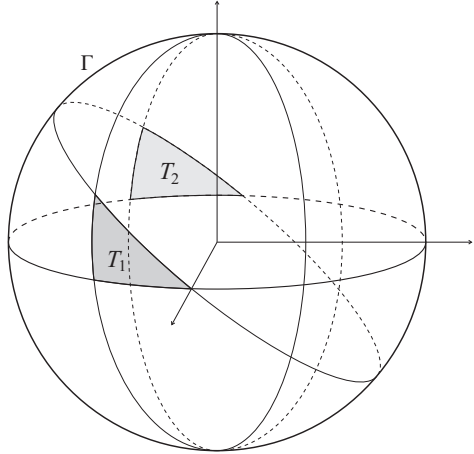}
\end{center}
\caption{$\Omega=\esf^2-(T_1\cup T_2)$ does not satisfies the
  condition (C1) when $\partial T_1=A_1\cup A_2\cup A_3$ and $\partial
  T_2=B_1\cup B_2\cup B_3$.}
\label{fig:esf}
\end{figure}

Before ending this section, let us give a result which is the converse of
statement $(iii)$ in Lemma \ref{lem:flux}.

\begin{lemma}\label{lem:xulf}
Let $u$ be a minimal graph on a domain $\Omega\subset\M^2$. Let
$T\subset\partial\Omega$ be a geodesic arc such that $F_u(T)=|T|$ (resp.
$F_u(T)=-|T|$). Then $u$ takes on $T$ the boundary value $+\infty$ (resp 
$-\infty$).
\end{lemma}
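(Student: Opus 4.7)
I argue by contradiction, using a local Scherk barrier and a flux estimate. Suppose $u$ does not take the boundary value $+\infty$ on $T$: there exist a point $p$ in the interior of $T$, a constant $M\in\R$, and a sequence $q_k\to p$ in $\Omega$ with $u(q_k)\le M$.

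The starting point is a flux bound on subarcs. From the additivity of the flux together with the pointwise bound $|X_u|<1$---which, via a limiting application of $F_u(\partial\Omega')=0$ to subdomains bounded by $T'$ and a nearby interior parallel curve, gives $|F_u(T')|\le|T'|$ for every subarc $T'\subseteq T$---the hypothesis $F_u(T)=|T|$ forces $F_u(T')=|T'|$ on every subarc $T'\subseteq T$.

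Next, near $p$ I would build a thin half-collar $V\subset\Omega$ whose boundary consists of a short subarc $T_0\subset T$ containing $p$ in its interior, together with a strictly convex arc $\gamma\subset\overline{\Omega}$ joining the endpoints of $T_0$. By taking $V$ thin enough, $V$ becomes a Scherk domain satisfying the strict Jenkins--Serrin inequality $2|T_0|<|T_0|+|\gamma|$, so Theorem~\ref{th1} (second case, with $T_0$ playing the role of an $A$-edge and $\gamma$ a $C$-arc with zero boundary data) produces a Scherk barrier $\phi:V\to\R$ with $\phi=+\infty$ on $T_0$ and $\phi=0$ on $\gamma$.

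The core of the argument is then to combine $\phi$ with the Straight Line Lemma (Lemma~\ref{lem:convexhull}(i)) and the sequential bound $u(q_k)\le M$ to establish a uniform upper bound on $u$ in a tubular neighborhood of $p$ inside $V$. Once this is available, the interior gradient estimate for the minimal surface equation gives a uniform upper bound on $|\nabla u|$, hence $|X_u|\le 1-\delta$, on a slightly smaller neighborhood. Consequently, $F_u(T_1)\le (1-\delta')|T_1|<|T_1|$ for a small subarc $T_1\subset T_0$ around $p$, contradicting the equality on subarcs established above. The main obstacle is this last boundedness step: promoting the sequential estimate $u(q_k)\le M$ to a uniform bound on a full neighborhood of $p$ requires a delicate generalized maximum-principle comparison with $\phi$ (for which $u$ must be controlled on $\gamma$, problematic near the endpoints of $T_0$ where $u$ need not extend continuously), together with the Straight Line Lemma to propagate the resulting estimate inward. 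The analogous statement for $F_u(T)=-|T|$ follows by applying the argument to $-u$.
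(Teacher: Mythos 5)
Your reduction to the subarc flux equality $F_u(T')=|T'|$ and your construction of a Scherk barrier on a half-collar are both sound and match the opening of the paper's proof. But the step you yourself flag as ``the main obstacle'' is a genuine gap, not a technicality that a more careful comparison would fix. From $u(q_k)\le M$ along one sequence $q_k\to p$ you cannot deduce a uniform upper bound for $u$ on a neighborhood of $p$: the maximum principle needs control of $u$ on the whole boundary of a subdomain, which is unavailable on $T_0$ (that is exactly what is in question), and Lemma~\ref{lem:convexhull}(i) only propagates bounds from a \emph{strictly convex boundary arc} into its convex hull --- it says nothing about interior points accumulating on a geodesic boundary arc. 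Moreover, even granting $|u|\le M'$ on a half-neighborhood of $p$, the interior gradient estimate yields $|X_u|\le 1-\delta$ only on compact subsets of $\Omega$, at a distance from $T$ that degenerates as one approaches the boundary; it does not give a bound up to the arc $T_1$ itself, so the strict inequality $F_u(T_1)<|T_1|$ for the boundary flux does not follow (and Lemma~\ref{lem:flux}(ii) requires a continuous extension with finite values, which you have not established).

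The paper's proof takes a different and gap-free route: it builds the barrier $v$ on the half-disk $\Omega'$ with $v=+\infty$ on $T'$ and $v=u$ on the strictly convex part of $\partial\Omega'$, and then proves $u\equiv v$ outright, rather than trying to bound $u$. The identity is obtained by a Collin--Krust flux argument on $O=\{u<v-\ve\}$: truncating at distance $\mu$ from $T'$, the flux of $X_u-X_v$ across the level curve $\{v-u=\ve\}$ is positive and nondecreasing as $\mu\to 0$ (Lemma 2 of \cite{ck1}), the two short sides contribute $O(\mu)$, and the side on $T'$ contributes nothing because \emph{both} $F_u$ and $F_v$ attain the extremal value $|\cdot|$ there (your subarc equality for $u$, and Lemma~\ref{lem:flux}(iii) for $v$). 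This monotonicity-of-flux mechanism is the idea your proposal is missing; without it, or some substitute for it, the contradiction you aim for cannot be reached.
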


\begin{proof}
Let us consider $p\in T$, and $\Omega'$ be the set of points in $\Omega$ at
distance less than $\delta$ from $p$ ($\delta$ is chosen very small),
$\Omega'$ is a half-disk. Let $T'$ be $T\cap\partial\Omega'$, we have
$F_u(T')=|T'|$ and the other part of $\partial \Omega'$ is strictly
convex. From Theorem \ref{th1}, there exists on $\Omega'$ a minimal graph
$v$ with $u=v$ on $\partial\Omega'\backslash T'$ and $v=+\infty$ on $T'$.
The lemma is proved if we show that $u=v$. 

If the lemma is not true, we can assume that $\{u<v-\ve\}$ is
nonempty; where $\ve$ is chosen to be a regular value of $v-u$. Let $O$
denote $\{u<v-\ve\}$. Let $C$ be the connected component of the complement
of $O$ which has $\partial\Omega'\backslash T'$ in its boundary and we
consider $O'$ the complement of $C$: we have $O\subset O'$ and $\partial
O'\subset \partial O \cup T'$. Let $q$  be a point in $\partial O'
\cap\Omega'$. For $\mu>0$, let $O'(\mu)$ be the set of point $O'$ at
distance larger than $\mu$ from $T'$. Let $q_1$ and $q_2$ be the
endpoints of the connected component of $\partial O'(\mu)\cap
\partial O'$ which contains $q$. Let $p_i$ be the projection of $q_i$ on
$T'$. Let $\widetilde{O}(\mu)$ be the domain bounded by the segments
$[q_1,p_1]$, $[p_1,p_2]\subset T'$, $[p_2,q_2]$ and the boundary component
of $O'(\mu)$ between $q_2$ and $q_1$. On this last component
$\Gamma(\mu)$ the vector $X_u-X_v$ points outside $\widetilde{O}(\mu)$.
Since $F_u(\partial \widetilde{O}(\mu)) =0= F_v(\partial
\widetilde{O}(\mu))$, we have:
\begin{align*}
0<\int_{\Gamma(\mu)}\langle X_u-X_v,\nu\rangle&= -
\int_{[p_1,q_1]\cup[p_2,q_2]} \langle X_u-X_v,\nu\rangle - 
\int_{[p_1,p_2]}\langle X_u-X_v,\nu\rangle\\
&\le 4\mu-\int_{[p_1,p_2]}\langle X_u-X_v,\nu\rangle 
\end{align*}
By hypothesis on $u$ and $v$ and Lemma \ref{lem:flux}$-(iii)$, the last
term vanishes; moreover the integral on $\Gamma(\mu)$ increases as $\mu$
goes to $0$ (see Lemma 2 in \cite{ck1}). Thus we have a contradiction and
$u=v$.
\end{proof}

%%%%%%%%%%%%%%%%%%%
%%%%%%%%%%%%%%%%%%%

\section{A particular case:  $\M=\H^2$}\label{secH2}

%%%%%%%%%%%%%%%%%%%
%%%%%%%%%%%%%%%%%%%

In the rest of the paper we study the Dirichlet problem for unbounded
domains in $\H^2$. 

Collin and Rosenberg~\cite{cor2} have extended
Theorems~\ref{th:divergenceset} and~\ref{th:max} to some unbounded
domains. More precisely, they consider simply connected domains
$\Omega\subset\H^2$ whose boundary consists of finitely many ideal
geodesics and finitely many complete convex arcs (convex towards
$\Omega$) together with their endpoints at infinity, $\Omega$
satisfying the following assumption:

\begin{figure}\begin{center}
\epsfysize=5cm \epsffile{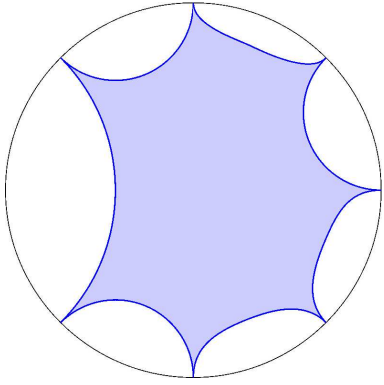}
\end{center}
\caption{A domain $\Omega\subset\H^2$ satisfying condition (C-R).}
\label{fig:CR}
\end{figure}

\begin{quote}
  (C-R)\quad If $C\subset \partial\Omega$ is a convex arc with endpoint
  $p\in\partial_\infty\H^2$, then the other arc $\g$ of
  $\partial\Omega$ having $p$ as an endpoint is asymptotic to $C$ at
  $p$; i.e., if $\{x_n\}$ is a sequence in $\g$ converging to $p$,
  then ${\rm dist}_{\H^2}(x_n,C)\to 0$ (see Figure~\ref{fig:CR}).
\end{quote}
They solve the Dirichlet problem for such domains.  The same results
without assuming $\Omega$ is simply connected can be obtained from
Theorem~\ref{th1}, following Collin and Rosenberg's ideas. Our aim is to
weaken the hypotheses on $\Omega$, in particular the (C-R)
hypothesis. Also we will allow $\Omega$ to have arcs in
$\partial_\infty\H^2$ in its closure.

%%%%%%%%%%%%%%%%%%%

\subsection{Minimal graphs over unbounded domains}

%%%%%%%%%%%%%%%%%%%

%%%%%%%%%%%%%%
\subsubsection{First examples}
%%%%%%%%%%%%%%

Let $p$ be a point in $\partial_\infty 
\H^2$. We consider the half-plane model for the hyperbolic plane,
$\H^2=\{(x,y)\in\R^2\ |\ y>0\}$ with metric $\langle\ ,\ \rangle
=\frac{1}{y^2}\, g_0$, where $g_0$ is the Euclidean metric and assume that
$p$ is the point of coordinates $(0,0)$. For $(\phi,\theta)\in
\R\times(0,\pi)$ we consider the point $q=(e^\phi\cos\theta,
e^\phi\sin\theta)\in \R\times \R_+^*=\H^2$. We will call
$(\phi,\theta)$ the polar coordinates of $q$ centered at $p$. In these new
coordinates, the hyperbolic metric becomes $\frac{1}{\sin^2\theta}
(d\phi^2+ d\theta^2)$; the coordinates $(\phi,\theta)$ are conformal. 

We notice that there are several polar coordinates centered at $p$
\textit{i.e.} given a point $q \in \H^2$ there exists one 
hyperbolic isometry fixing $p$ such that the polar coordinates centered at
$p$ of $q$ becomes $(0,\pi/2)$. The curves $\{\phi=constant\}$ are
geodesics. The curve $\{\theta=\pi/2\}$ is also a geodesic  of $\H^2$ and,
 for any $\theta_0\in(0,\pi)$, the curve $\{\theta=\theta_0\}$ is 
equidistant
to this geodesic; we denote by
\begin{equation}\label{def:distance}
d_{\theta_0}=
\left|\int_{\theta_0}^{\pi/2}\frac{d\theta}{\sin\theta}\right|
\end{equation}
the distance between the geodesic $\{\theta=\pi/2\}$ and its equidistant
$\{\theta=\theta_0\}$.

A minimal graph $u$ which takes constant values on the equidistant curves
to the geodesic $\{\theta=\pi/2\}$ can be written
$u(\phi,\theta)=f(\theta)$, where $f$ satisfies the
following differential equation (see Appendix \ref{appendix}):
$$
\dfrac{d}{d\theta}\left(\frac{f'}{\sqrt{
1+\sin^2\theta\left|f'\right|^2}} \right)=0
$$
Thus, by integrating this equation with $f(0)=0$, we get minimal surfaces
that were first obtained by Sa Earp~\cite{e} and Abresch (see Appendix
\ref{appendix}).
\begin{lemma}\label{lem:barrier}
Let $\theta_0\in(0,\pi/2]$. There is a minimal graph $h_{\theta_0}$
defined on the domain $\Omega_{\theta_0}=\{0<\theta<\theta_0\}$ which takes
constant values on the equidistant curves to
$\{\theta=\pi/2\}$, have boundary data $0$ on the boundary arc
$\{\theta=0\}$ and satisfies $\dfrac{d h_{\theta_0}}{d\nu}=+\infty$ on
$\{\theta=\theta_0\}$ ($\nu$ is the outer unit normal to
$\partial\Omega_{\theta_0}$). When
$\theta_0<\pi/2$, $h_{\theta_0}$ takes a constant finite value on
$\{\theta=\theta_0\}$ and $h_{\pi/2}$ diverges to $+\infty$ on the
geodesic $\{\theta=\pi/2\}$
\end{lemma}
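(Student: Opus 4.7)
The plan is to look for the graph in the form $h_{\theta_0}(\phi,\theta)=f(\theta)$ suggested before the statement. Since the minimal graph equation has already been reduced to
$$\frac{d}{d\theta}\left(\frac{f'}{\sqrt{1+\sin^2\theta\,(f')^2}}\right)=0,$$
a first integration yields a first integral $\dfrac{f'}{\sqrt{1+\sin^2\theta\,(f')^2}}=c$ for some constant $c\in\R$. Solving algebraically for $f'$ (and choosing the positive branch to get an increasing function) gives
$$f'(\theta)=\frac{c}{\sqrt{1-c^2\sin^2\theta}},$$
valid as long as $c^2\sin^2\theta<1$.

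Next, I would choose the constant to realize the boundary behavior: take $c=1/\sin\theta_0$, so that
$$f'(\theta)=\frac{1}{\sqrt{\sin^2\theta_0-\sin^2\theta}},$$
which blows up exactly as $\theta\to\theta_0^-$. This is the analytic incarnation of $dh_{\theta_0}/d\nu=+\infty$ along $\{\theta=\theta_0\}$, because the Euclidean normal derivative and the hyperbolic one differ only by a bounded conformal factor, and the tangential coordinate $\phi$ plays no role. Define
$$h_{\theta_0}(\phi,\theta)=f(\theta)=\int_0^\theta\frac{d\tau}{\sqrt{\sin^2\theta_0-\sin^2\tau}}.$$
By construction, $f(0)=0$, so $h_{\theta_0}=0$ on $\{\theta=0\}$ (which is a geodesic of $\H^2$).

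It remains to describe the behavior at the other boundary arc. Near $\theta=\theta_0$ one has $\sin^2\theta_0-\sin^2\theta\sim 2\sin\theta_0\cos\theta_0\,(\theta_0-\theta)$ if $\theta_0<\pi/2$, so the integral defining $f(\theta_0)$ converges and $h_{\theta_0}$ extends continuously to the equidistant $\{\theta=\theta_0\}$ with a finite constant value along it. When $\theta_0=\pi/2$ the expression simplifies to $f'(\theta)=1/\cos\theta$, whose antiderivative is $\ln(\sec\theta+\tan\theta)$, and this diverges to $+\infty$ as $\theta\to\pi/2$, giving $h_{\pi/2}=+\infty$ on the geodesic $\{\theta=\pi/2\}$. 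Finally, since $|f'|\to\infty$ at $\theta_0$, the outer unit normal derivative of $h_{\theta_0}$ also diverges to $+\infty$ there, completing the verification.

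The step that is essentially all of the work is the correct choice of the integration constant $c$: one has to see that $c=1/\sin\theta_0$ is the unique choice producing the infinite-slope condition at $\theta_0$, and then read off the dichotomy between $\theta_0<\pi/2$ and $\theta_0=\pi/2$ directly from convergence of the elementary improper integral above. No compactness or Perron machinery is needed, which is why Lemma~\ref{lem:barrier} serves as a ready-made barrier in what follows.
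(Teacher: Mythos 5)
Your proof is correct and is essentially the paper's own argument: the authors also integrate the first integral $f'/\sqrt{1+\sin^2\theta\,(f')^2}=A$ and read off the dichotomy from the cases $A=1$ (giving $h_{\pi/2}$, with $f'=1/\cos\theta$) and $A>1$ (giving $h_{\theta_0}$ with $\theta_0=\arcsin(1/A)$), exactly as in their Appendix, case $H=0$. One small correction: the arc $\{\theta=0\}$ is not a geodesic of $\H^2$ but an arc of the ideal boundary $\partial_\infty\H^2$ (the positive $x$-axis in the half-plane model), so the value $0$ there is an asymptotic boundary value; this does not affect your argument since $f$ is continuous at $\theta=0$ with $f(0)=0$.
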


In the half-plane model, the minimal graph $h_{\pi/2}$ is defined on
$\R_+^*\times\R_+^*$ by 
\begin{equation}\label{expressionh}
h_{\pi/2}(x,y)=\ln\frac{\sqrt{x^2+y^2}+y}{x}
\end{equation}
Then if $\Omega$ is a domain bounded by a geodesic and
an arc in $\partial_\infty\H^2$, Lemma \ref{lem:barrier} gives a minimal
graph $h$ over $\Omega$ with value $0$ on the arc in $\partial_\infty\H^2$
and $h=+\infty$ on the geodesic. We notice that $\pm h+M$  is a minimal
graph over $\Omega$ with value $M$ on the arc in $\partial_\infty\H^2$ and
$\pm\infty$ on the geodesic. These minimal graphs are examples of solutions
to a Dirichlet problem that can be recovered by the work of Collin and
Rosenberg in \cite{cor2}.

In the following, we want to generalize such examples. The above surfaces
will be used as barriers to study boundary values and uniqueness. As above,
the domains $\Omega$ we shall study have arcs in $\partial_\infty\H^2$ as
boundary; thus we shall denote by $\partial\Omega$ the boundary of
$\Omega$ in $\H^2$ and by $\partial_\infty\Omega$ the boundary of $\Omega$
in the compactified space $\H^2\cup\partial_\infty\H^2$;
$\overline{\Omega}^\infty$ will denote the closure of $\Omega$ in $\H^2\cup
\partial_\infty\H^2$.

%%%%%%%%%%%%%%
\subsubsection{Convergence of sequences of minimal graphs}
%%%%%%%%%%%%%%

In this section, we solve the Dirichlet problem in a more general
setting, where a maximum principle is not necessarily
satisfied (see Section~\ref{secPrincMax}). We cannot then apply the
method developed by Jenkins and Serrin to solve the Dirichlet problem
on $\Omega$, since we cannot assure the monotonicity of the
constructed graphs $u_n$ in the third step of the proof (see the third
case ``$\{C_i\}\neq\emptyset$'' in the proof of Theorem~\ref{th1}).
We now study the convergence of a (non necessarily monotone) sequence
of minimal graphs on~$\Omega$.

Let $\Omega\subset\H^2$ be a domain whose boundary $\partial_\infty
\Omega$ is piecewise smooth
(possibly with some arcs at $\partial_\infty\H^2$).  Given a sequence
$\{u_n\}$ of minimal graphs on $\Omega$, we define the {\it
  convergence domain} of the sequence $\{u_n\}$ as
\[
{\cal B}=\left\{p\in\Omega\ |\ \{|\nabla u_n(p)|\} \text{ is
bounded}\right\},
\]
and the \textit{divergence set} of $\{u_n\}$ as
\[
{\cal D}=\Omega-{\cal B} .
\]
We remark that, in Theorem~\ref{th:monotone}, we have already defined a
notion of convergence and divergence set for monotone sequences. In the
following, we only use these new definitions.

The following lemma gives us a local description of the convergence
domain ${\cal B}$ and the divergence set ${\cal D}$ that justifies their
names.  $G(u_n)$ will denote the graph of $u_n$, and $N_n(p)$ the
downward pointing normal vector to $G(u_n)$ at the point $(p,u_n(p))$;
i.e. $N_n=(X_{u_n},\frac{-1}{W_{u_n}})$. For writting this, we use a
vertical translation to identify the tangent space $T(\H^2\times \R)$ with
$T\H^2\times \R$. In fact, in the following, we often use vertical
translations to identify the tangent spaces.

\begin{lemma}\label{lem:conv_div}\mbox{}
  \begin{enumerate}
  \item Given $p\in{\cal B}$, there exists a subsequence of
    $\{u_n-u_n(p)\}$ converging uniformly to a minimal graph in a
    neighborhood of $p$ in $\Omega$. The size of the neighborhood depends
only on the distance from $p$ to $\partial\Omega$ and an upper-bound for
$\{|\nabla u_n(p)|\}$. Also, ${\cal B}$ open follows
    from curvature estimates.
  \item If $p\in{\cal D}$, there exists a compact geodesic arc
    $L_p(\delta)\subset\Omega$ of length $2\delta$ centered at $p$,
$\delta>0$ only depends on ${\rm dist}_{\H^2}(p,\partial\Omega)$, such
that, after passing to a subsequence, $\{N_n(q)\}$ converges to a
horizontal vector orthogonal to $L_p(\delta)$ at every point $q\in
    L_p(\delta)$. 
\end{enumerate}
\end{lemma}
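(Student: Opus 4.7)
The plan is to treat the two parts separately. For (1), set $v_n = u_n - u_n(p)$, so $v_n(p)=0$ and $|\nabla v_n(p)|\le M$ uniformly by the definition of $\boB$. The translated graphs $G(v_n)\subset\H^2\times\R$ pass through $(p,0)$ with tangent planes of slope bounded by $M$. I would invoke Schoen-type curvature estimates for stable minimal surfaces in $\H^2\times\R$ (each $G(v_n)$ is stable as a graph) to bound $|A_{G(v_n)}|$ uniformly in a geodesic ball of radius $r_0=r_0(\mathrm{dist}_{\H^2}(p,\partial\Omega))$ about $(p,0)$, using that the intrinsic distance from $(p,0)$ to $\partial G(v_n)$ is bounded below by $\mathrm{dist}_{\H^2}(p,\partial\Omega)$. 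Combined with the uniform slope bound, standard elliptic regularity gives uniform $C^{k,\alpha}$ estimates for $v_n$ on a neighborhood $U\ni p$ of controlled size, and Arzel\`a--Ascoli extracts a subsequence converging to a minimal graph on $U$. The openness of $\boB$ follows since on $U$ the gradients $|\nabla u_n|=|\nabla v_n|$ stay uniformly bounded.

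For (2), pass to a subsequence with $|\nabla u_n(p)|\to\infty$. Then $1/W_{u_n}(p)\to 0$ and $|X_{u_n}(p)|\to 1$, so after a further extraction $N_n(p)\to\nu_\infty$, a unit horizontal vector in $T_p\H^2$. Let $L_p\subset\H^2$ be the complete geodesic through $p$ orthogonal to $\nu_\infty$. Apply the same stability/curvature estimates to the vertically translated graphs $G(v_n)=G(u_n)-(0,u_n(p))$: the second fundamental forms stay bounded in a geodesic ball $B\subset\H^2\times\R$ around $(p,0)$ of radius depending only on $\mathrm{dist}_{\H^2}(p,\partial\Omega)$, and a subsequence converges, as embedded surfaces with locally bounded curvature, to a smooth minimal surface $\Sigma\subset B$ through $(p,0)$. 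Since $N_n(p)\to\nu_\infty$, $\Sigma$ is tangent at $(p,0)$ to the totally geodesic vertical plane $P=L_p\times\R$.

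The crucial step is then to identify $\Sigma$ with $P$ in a neighborhood of $(p,0)$. I would write $\Sigma$ locally as a normal graph $s$ over $P$ with $s(p,0)=0$ and $\nabla s(p,0)=0$, so that $s$ solves an analytic quasilinear elliptic equation with trivial first-order data at $(p,0)$; analyticity of minimal surfaces in the analytic ambient metric, combined with unique continuation, forces $s\equiv 0$ near $(p,0)$ and gives $L_p(\delta)\times\{0\}\subset\Sigma\cap P$ for some $\delta>0$ controlled by the radius of $B$. Once this is in hand, for each $q\in L_p(\delta)$ the smooth convergence $G(v_n)\to\Sigma$ near the vertical segment $\{q\}\times(-\eta,\eta)\subset\Sigma$ forces the unique point $(q,v_n(q))=G(v_n)\cap(\{q\}\times\R)\cap B$ in the main sheet through $(p,0)$ to tend to $(q,0)$, and hence $N_n(q)\to N_\Sigma(q,0)=\nu_\infty(q)$, which is horizontal and orthogonal to $L_p$ at $q$.

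The main obstacle is precisely this identification $\Sigma=P$ in a neighborhood of $(p,0)$: a direct maximum-principle comparison is unavailable since the $G(v_n)$ may oscillate around $P$, and plain first-order vanishing of $s$ is, a priori, borderline for unique continuation. A robust backup is to work on $\Omega$ itself: the level curves $\{v_n=c\}$ have geodesic curvature controlled by $1/W_{v_n}$, which tends to $0$ at points of ${\cal D}$, so the level curve $\{v_n=0\}$, which passes through $p$ perpendicular to $\nabla v_n(p)$, subconverges to a geodesic arc tangent to $L_p$ at $p$ and therefore agrees with a portion of $L_p$. Lifting this arc back into $\H^2\times\R$ at height $0$ yields the required arc inside $\Sigma$, and the argument concludes as above.
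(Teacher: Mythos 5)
Part 1 of your argument is essentially the paper's: Schoen's curvature estimate applied to the stable graphs $G(v_n)$, $v_n=u_n-u_n(p)$, gives a uniform graphical neighborhood of $(p,0)$ of size depending only on ${\rm dist}_{\H^2}(p,\partial\Omega)$, and the bounded-slope hypothesis at $p$ converts this into uniform local bounds for $v_n$; this part is fine.

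The gap is in part 2, at exactly the step you flag: identifying the limit surface $\Sigma$ with the vertical plane $\Pi=L_p\times\R$. Your primary argument is not ``borderline'' --- it fails. Writing $\Sigma$ as a normal graph $s$ over $\Pi$ with $s$ and $\nabla s$ vanishing at a single point gives only first-order vanishing there, whereas unique continuation requires vanishing to infinite order; a nontrivial solution of the relevant quasilinear elliptic equation can vanish to order exactly $2$ at a point, and the generic tangential intersection of two distinct minimal surfaces is precisely of this type. Your backup is also unsupported: the geodesic curvature of a level curve of $v_n$ is not controlled pointwise by $1/W_{v_n}$ (it involves the derivative of $|\nabla v_n|/W_{v_n}$ in the gradient direction), and making that idea rigorous essentially presupposes the statement being proved; moreover, even granting that $L_p(\delta)\times\{0\}\subset\Sigma$, this alone does not force $\Sigma$ to coincide with $\Pi$ along the arc, which is what you need to conclude that the $N_n(q)$ become horizontal. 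What the paper uses instead, and what is missing from your proposal, is the combination of two facts: (i) by the local structure of tangential intersections of minimal surfaces ($s$ vanishes to finite order $k\ge 2$ and its nodal set near the tangency point consists of $k$ curves crossing there), if $\Sigma\not\subset\Pi$ then $\Sigma$ has pieces on both sides of $\Pi$ near $(p,0)$, so the vertical component of its unit normal changes sign there; (ii) $\Sigma$ is a limit of the vertical graphs $G(v_n)$, whose downward unit normals have everywhere nonpositive vertical component, so the limit normal on $\Sigma$ has nonpositive vertical component as well. These are contradictory, whence $\Sigma\subset\Pi$ and $L_p(\delta)=\Sigma\cap(\H^2\times\{0\})$. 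It is the sign constraint (ii), inherited from graphicality, that does the real work, and it appears nowhere in your write-up. (The assertion that $N_n(q)$ converges at \emph{every} $q\in L_p(\delta)$, not merely near $p$, also needs the care the paper delegates to \cite{mazet0,mazet5}, but that is secondary to the main gap.)
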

\begin{proof}
  Fix $p\in\Omega$, and define $v_n=u_n-u_n(p)$.  We denote by
  $G(v_n)$ the graph of $v_n$. Observe that, for any $q\in\Omega$, the
  downward pointing normal vector to $G(v_n)$ at $Q=(q,v_n(q))$
  coincides with $N_n(q)$, and that both the convergence and
  divergence sets associated to $\{v_n\}$ and $\{u_n\}$ coincide.  The
  distance from $P=(p,0)$ to the boundary of $G(v_n)$ is bigger than
  or equal to $d={\rm dist}_{\H^2}(p,\partial\Omega)$.  Hence we
  deduce from Schoen's curvature estimates~\cite{sc3} that there
  exists $\delta>0$ depending on $d$ such that a neighborhood of $P=(p,0)$
in $G(v_n)$
  is a graph of uniformly bounded height and slope 
  over the disk $\D_{n}(\delta)\subset T_P G(v_n)$ of radius $\delta$
  centered at the origin of $T_P G(v_n)$ (see
  \cite{pro2}, Lemma 4.1.1, for more details). By graph here we mean a
  graph in geodesic coordinates, orthogonal to $\D_n(\delta)$. We call
  $G_n(p,\delta)$ such a graph.
  
  Suppose $p\in{\cal B}$. Since $\{|\nabla u_n(p)|\}$ is uniformly
  bounded, a subsequence of $\{N_n(p)\}$ converges to a non-horizontal
  vector, so the tangent planes $T_P G(v_n)$ converge to a
  non-vertical plane $\Pi$, and the disks $\D_{n}(\delta)$ converge to
  a disk $\D(\delta)\subset\Pi$ of radius $\delta$.  From standard
  arguments (see \cite{pro2}, Theorem 4.1.1) we deduce that a
  subsequence of $\{G_n(p,\delta)\}$ converges to a minimal graph
  $G(p,\delta)$ over $\D(\delta)$.  Hence there exists a disk
  $D(p,\widetilde\delta)\subset\Omega$ of radius
  $\widetilde\delta\in(0,\delta]$ such that
  $\{v_n|_{D(p,\widetilde\delta)}\}$ is uniformly bounded. After
  passing to a subsequence, $\{v_n|_{D(p,\delta)}\}$ converges
  uniformly on compact subsets of $D(p,\widetilde{\delta})$ to a minimal
  (vertical) graph.  This proves {\it 1}.

  Now assume $p\in{\cal D}$.  Since $\{|\nabla u_n(p)|\}$ is
  unbounded, we can take a subsequence of $\{u_n\}$ so that $|\nabla
  u_n(p)|\to+\infty$ and $\{N_n(p)\}$ converges to a horizontal
  vector. In particular, the tangent planes $T_P(G(v_n))$ converge to
  a vertical plane $\Pi$, and a subsequence of $\{G_n(p,\delta)\}$
  converges to a minimal graph $G(p,\delta)$ over a disk
  $\D(\delta)\subset\Pi$ of radius $\delta$ centered at $P$. The graph
  $G(p,\delta)$ is tangent to $\Pi$ at $P$.  The following argument
  follows the ideas in~\cite{hrs2}, Claim 1: If
  $G(p,\delta)\not\subset\Pi$, then $G(p,\delta)\cap\Pi$ consists of
  $k\geq 2$ smooth curves meeting transversally at $P$. In particular,
  there are parts of $G(p,\delta)$ on both sides of $\Pi$. Thus there
  are points in $G(p,\delta)$ where the normal vector points up and
  points where the normal points down. But this is impossible, since
  $G(p,\delta)$ is the limit of vertical graphs. Therefore,
  $G(p,\delta)\subset\Pi$.

  We call $L_p(\delta)$ the geodesic $G(p,\delta)\cap(\H^2\times\{0\})$,
whose
  length is $2\delta$.  We can deduce that the tangent planes of $G(v_n)$
at
  $(q,v_n(q))$ converge to $\Pi$, for every $q\in L_p(\delta)$ (for precise
  details, see~\cite{mazet0,mazet5}), which completes the
  proof of Lemma~\ref{lem:conv_div}.
\end{proof}

The next lemma shows ${\cal D}=\cup_{i\in I} L_i$, where each $L_i$ is
a component of the intersection of a ideal geodesic in $\H^2$
with~$\Omega$. The geodesics $L_i$ are called {\it divergence lines}.

\begin{lemma}\label{lem:continuation}
  Given $p\in{\cal D}$, there exists a geodesic $L\in\Omega$ joining
  points in $\partial_\infty\Omega$ (possibly at $\partial_\infty\H^2$)
which
  passes through $p$ and such that, after passing to a subsequence,
  $\{N_n|_L\}$ converges to a horizontal vector orthogonal to $L$ (in
  particular, $L\subset{\cal D}$). In fact, $L$ is the geodesic
  containing $L_p(\delta)$.
\end{lemma}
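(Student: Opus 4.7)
Let $L$ be the complete geodesic of $\H^2$ containing the arc $L_p(\delta)$ provided by Lemma~\ref{lem:conv_div}(2), and let $C$ denote the connected component of $L\cap\Omega$ containing $p$. Since $L\cap\Omega$ is open in $L$, the arc $C$ is a (possibly unbounded) open geodesic arc whose two endpoints necessarily lie in $\partial_\infty\Omega$. The goal is therefore to show that, after passing to a single subsequence, $N_n$ converges along all of $C$ to a unit horizontal vector field orthogonal to $L$. Once this is achieved, the component $C$ itself is the desired divergence line, which by construction passes through $p$, is contained in $\Omega$, and joins two points of $\partial_\infty\Omega$.

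First I would fix the subsequence of $\{u_n\}$ from Lemma~\ref{lem:conv_div}(2) giving convergence of $N_n$ to a horizontal field orthogonal to $L$ on $L_p(\delta)$, and run a standard continuation argument. Parametrize $C$ by arc length $\gamma:(a,b)\to C$ with $\gamma(0)=p$, and define
\[
t^*:=\sup\bigl\{t\in[\delta,b):\text{after passing to a further subsequence, }N_n\to \nu_L\ \text{along}\ \gamma([0,t])\bigr\},
\]
where $\nu_L$ denotes a unit horizontal vector field orthogonal to $L$. A Cantor diagonal extraction over an exhaustion $\gamma([0,t^*-1/k])$ produces a single subsequence (which I will still denote $\{u_n\}$) realizing the convergence on $\gamma([0,t^*))$. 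The claim is $t^*=b$; otherwise $q:=\gamma(t^*)\in\Omega$, and I would aim to extend the convergence past $q$ to get a contradiction. The analogous argument applies to the left endpoint.

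The main obstacle is showing that the direction of divergence at $q$ coincides with the direction of $L$. I would proceed in two substeps. (i) Show $q\in{\cal D}$: if instead some sub-subsequence had $\{|\nabla u_n(q)|\}$ bounded, then Lemma~\ref{lem:conv_div}(1) would give convergence of $\{u_n-u_n(q)\}$ to a smooth minimal graph on a neighborhood $U$ of $q$, with uniformly bounded slope; picking $p_j=\gamma(t^*-1/j)\in U\cap \gamma([0,t^*))$ then yields $|\nabla u_n(p_j)|$ bounded on this sub-subsequence, contradicting the fact that $|\nabla u_n(p_j)|\to+\infty$ along the parent subsequence (since $N_n(p_j)\to\nu_L(p_j)$ is horizontal). (ii) Apply Lemma~\ref{lem:conv_div}(2) at $q$, extracting a further subsequence and a geodesic arc $L_q(\delta_q)\subset\Omega$ along which $N_n$ converges to a horizontal vector $\nu_q$ orthogonal to $L_q(\delta_q)$. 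Taking $p_j\to q$ with $p_j\in\gamma([0,t^*))$ and combining the two limits by a diagonal argument, the continuity of the (normalized) limit $\nu_L$ along $L$ forces $\nu_q=\nu_L(q)$, hence $L_q(\delta_q)$ is tangent to $L$ at $q$ and therefore coincides with an open subarc of $L$ through $q$.

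This extension shows that $N_n$ converges to $\nu_L$ on $\gamma([0,t^*+\delta_q/2])\subset C$, contradicting the definition of $t^*$. Hence $t^*=b$, and by the symmetric argument the convergence extends to $\gamma((a,b))=C$. Since the endpoints of $C$ lie in $\partial_\infty\Omega$, this completes the proof and shows $L\subset{\cal D}$ (after identifying $L$ with $C$).
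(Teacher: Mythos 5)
Your overall strategy is the paper's own argument in different clothing: the paper shows that the set $\Lambda$ of points $q\in L$ for which $N_n|_{[p,q]}$ becomes horizontal and orthogonal to $L$ (along a subsequence) is nonempty, open and closed in the component of $L\cap\Omega$ through $p$, while you run the equivalent ``first failure point'' continuation with $t^*$. Your step (i) is fine, and your ``extend past $q$'' step is exactly the paper's openness argument. The problem is at the closedness point, i.e.\ at $q=\gamma(t^*)$ itself.

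In step (ii) you apply Lemma~\ref{lem:conv_div}-\emph{2} at $q$ to get a direction $\nu_q$ orthogonal to some arc $L_q(\delta_q)$, and then claim $\nu_q=\nu_L(q)$ by ``continuity of the (normalized) limit $\nu_L$ along $L$.'' This is an unjustified interchange of limits: you know $N_n(p_j)\to\nu_L(p_j)$ for each fixed $j$ and $N_n(q)\to\nu_q$, but to conclude $\nu_q=\lim_j\nu_L(p_j)$ you would need equicontinuity of $\{N_n\}$ near $q$, which is precisely what fails near a divergence line (the intrinsic distance on $G(u_n)$ between $(p_j,u_n(p_j))$ and $(q,u_n(q))$ is at least $|u_n(p_j)-u_n(q)|$, which is not controlled a priori if $L_q(\delta_q)$ were transverse to $L$; indeed the pointwise limit of $N_n$ is genuinely discontinuous across divergence lines, so ``continuity of the limit'' cannot be assumed). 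The correct way to close this, which is what the paper does, is to apply Lemma~\ref{lem:conv_div}-\emph{2} at the approaching points $p_j\in\gamma([0,t^*))$ rather than at $q$: the resulting arcs $L_{p_j}(\delta_j)$ have length $2\delta_j$ bounded below by some $2\delta_0>0$ because ${\rm dist}_{\H^2}(p_j,\partial\Omega)$ is bounded below near $q\in\Omega$; at $p_j$ the limit normal is already known to be orthogonal to $L$, and it is also orthogonal to $L_{p_j}(\delta_j)$, which forces $L_{p_j}(\delta_j)\subset L$; for $j$ large these arcs contain $q$, so $N_n(q)$ converges to a horizontal vector orthogonal to $L$ directly, with no limit interchange. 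With that substitution your proof is complete and coincides with the paper's.
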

\begin{proof}
  Let $L_p=L_p(\delta)$ be the geodesic arc given in
  Lemma~\ref{lem:conv_div}-{\it 2}, and $L$ be the geodesic in
  $\Omega$ joining points in $\partial\Omega$ which contains $L_p$.
  For every $q$, we denote by $[p,q]\subset L$ the closed geodesic arc
  in $L$ joining $p,q$.  Define
  \[
  \Lambda=\left\{q\in L\ \Big|\ \begin{array}{ll}
    \mbox{ there exists a subsequence of } \{u_n\}\
    \mbox{ such that }\\
    N_n|_{[p,q]}\ \mbox{ becomes horizontal and
      orthogonal to } L
    \end{array}\right\}.
  \]
  Clearly, $p\in\Lambda$ so $\Lambda\neq\emptyset$.  Let us prove
  $\Lambda$ is open in $L$.  Take $q\in\Lambda$, and denote by
  $\{u_{\sigma(n)}\}$ its associated subsequence given in the
  definition of $\Lambda$. Since $\Lambda\subset{\cal D}$,
  Lemma~\ref{lem:conv_div}-{\it 2} gives us a geodesic arc $L_q$
  through $q$ such that, passing to a subsequence,
  $N_{\sigma(n)}|_{L_q}$ becomes horizontal and orthogonal to $L_q$.
  The vector $N_{\sigma(n)}(q)$ converges to a horizontal vector
  orthogonal simultaneously to $L$ and $L_q$, from which we deduce
  that $L_q\subset L$, and so $L_q\subset\Lambda$.

  Finally, we prove $\Lambda$ is a closed set, which finishes
  Lemma~\ref{lem:continuation}.  Let $\{q_m\}$ be a sequence of points
  in $\Lambda$ such that $q_m\to q\in L$. Let us prove that
  $q\in\Lambda$. For each $m$, there exists a subsequence of
  $\{u_n\}$ such that $N_n|_{[p,q_m]}$ becomes horizontal and
  orthogonal to $L$. A diagonal argument allows us to take a common
  subsequence of $\{u_n\}$ (also denoted by $\{u_n\}$) such that
  $N_n|_{[p,q_m]}$ becomes horizontal and orthogonal to $L$, for every
  $m$. For every $m$,  there is a geodesic arc $L_{q_m}\subset L$ centered
at $q_m$ satisfying Lemma~\ref{lem:conv_div}-{\it 2} whose length depends
only on ${\rm dist}_{\H^2}(q_m,\partial\Omega)$. Hence, $q\in L_{q_m}$
for any $m$ large enough, and so $q\in\Lambda$.
\end{proof}

\begin{proposition} \label{prop:conv} Suppose the divergence set of
  $\{u_n\}$ is a countable set of lines.  Then there exists a
  subsequence of $\{u_n\}$ (denoted as the original sequence) such
  that:
  \begin{enumerate}
  \item The divergence set ${\cal D}$ of
    $\{u_n\}$ is composed of a countable number of divergence
    lines, pairwise disjoint.
   %Suppose ${\cal B}\neq\emptyset$.
  \item For any component $\Omega'$ of ${\cal B}=\Omega-{\cal D}$ and
    any $p\in\Omega'$, $\{u_n-u_n(p)\}$ converges uniformly on compact
    sets of $\Omega'$ to a minimal graph over $\Omega'$.
  \end{enumerate}
\end{proposition}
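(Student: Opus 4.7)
The plan is to extract a subsequence by a diagonal argument indexed on a countable skeleton, to use orthogonality of the limit normals to deduce disjointness, and then to propagate the local convergence of Lemma~\ref{lem:conv_div}-(1) from a single base point to the whole component of $\mathcal{B}$ by a chaining argument along compact paths.

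By hypothesis and Lemma~\ref{lem:continuation}, I write $\mathcal{D}=\bigcup_i L_i$ as a countable union of maximal geodesic chords. Fix a countable set $\{p_k\}\subset\Omega$ that is dense in $\Omega$ and in each $L_i$. Applying Lemma~\ref{lem:continuation} along each $L_i$ and Lemma~\ref{lem:conv_div} at each $p_k$, a standard diagonal extraction yields one subsequence (still denoted $\{u_n\}$) such that along every $L_i$ the normals $N_n$ converge to a horizontal vector $\nu_i\perp L_i$, and at every $p_k\notin\mathcal{D}$ the sequence $\{u_n-u_n(p_k)\}$ converges uniformly on some neighborhood of $p_k$. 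Item~(1) then follows from a transversality argument: if two surviving divergence lines $L_i\neq L_j$ met at a point $p\in\Omega$, they would meet transversally (distinct geodesics in $\H^2$), so $N_n(p)$ would have to converge simultaneously to a vector orthogonal to $L_i$ and to one orthogonal to $L_j$, forcing $\nu_i=\nu_j$ and $L_i=L_j$, a contradiction.

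For item~(2), fix a component $\Omega'$ of $\mathcal{B}=\Omega\setminus\mathcal{D}$ and a base point $p\in\Omega'$. Each $p_k\in\Omega'$ carries an open ball $B_k\ni p_k$ on which $\{u_n-u_n(p_k)\}$ converges uniformly, so $|u_n(p_{k'})-u_n(p_k)|$ is bounded in $n$ whenever $p_{k'}\in B_k$. Since $\Omega'$ is open and connected, hence path-connected, any point $q\in\Omega'$ can be joined to $p$ by a compact path $\gamma\subset\Omega'$, which is covered by finitely many overlapping balls $B_{k_1},\ldots,B_{k_m}$ from the family; chaining the local bounds yields an estimate on $|u_n(q)-u_n(p)|$ independent of $n$. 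Choosing the same finite cover for every $q$ in a fixed compact $K\subset\Omega'$, the estimate becomes uniform on $K$, so $\{u_n-u_n(p)\}$ is locally uniformly bounded on $\Omega'$. Theorem~\ref{th:compactness}, applied along an exhaustion of $\Omega'$ by bounded subdomains together with one final diagonal extraction, then produces a subsequence of $\{u_n-u_n(p)\}$ converging uniformly on compact subsets of $\Omega'$ to a minimal graph.

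The main obstacle is this chaining step: the radii of the balls supplied by Lemma~\ref{lem:conv_div}-(1) depend both on $\mathrm{dist}_{\H^2}(p_k,\partial\Omega)$ and on the available bound for $|\nabla u_n(p_k)|$. Both quantities must be shown to be under uniform control as $p_k$ varies in a compact subset of $\Omega'$, so that finitely many balls of definite size suffice and the pointwise chaining closes with a bound independent of $n$; this is where the assumption $K\cap\mathcal{D}=\emptyset$, together with the already-extracted subsequence having locally bounded normals on $\Omega'$, is crucial.
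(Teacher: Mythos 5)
Your proposal is correct and follows essentially the same route as the paper: a diagonal extraction over the divergence lines, with disjointness coming from the fact that $N_n$ at a common point would have to converge to a horizontal vector orthogonal to two distinct geodesics simultaneously, followed by a second diagonal extraction over a countable dense subset of $\mathcal{B}$ combined with Lemma~\ref{lem:conv_div}-\textit{1}. The chaining argument you spell out for item~2 is exactly what the paper leaves implicit in its appeal to ``a diagonal argument,'' and your closing remark correctly identifies where the uniform control of the neighborhood sizes is needed.
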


\begin{proof}
  Suppose $L_1$ is a divergence line of $\{u_n\}$.
  Lemma~\ref{lem:conv_div} assures that, passing to a subsequence,
  $\{N_n(q)\}$ converges to a horizontal vector orthogonal to $L_1$ at
  $q$, for each $q\in L_1$. Observe that the divergence set associated
  to such a subsequence (denoted again by $\{u_n\}$) is contained in
  the divergence set of the original sequence. In particular, the
  divergence set for such a subsequence, denoted by ${\cal D}$,
  contains a countable number of divergence lines.

  Suppose there exists a divergence line $L_2\subset{\cal D}$,
  $L_2\neq L_1$. Passing to a subsequence, we obtain that
  $\{N_n(q)\}$ converges to a horizontal vector orthogonal to $L_2$,
  for each $q\in L_2$. In particular, $L_1\cap L_2=\emptyset$, since
  if there exists some $q\in L_1\cap L_2$ then $N_n(q)$ would
  converge to a horizontal vector orthogonal to both $L_1,L_2$
  simultaneously, a contradiction. The ``new'' divergence set ${\cal
    D}$ is then a countable set of divergence lines containing $L_1$ and
$L_2$, with $L_1\neq L_2$. 

  Continuing the above argument, we obtain with a diagonal process a
  subsequence of $\{u_n\}$ (also denoted by $\{u_n\}$) whose
  divergence set ${\cal D}$ is composed of a countable number of
  pairwise disjoint divergence lines $L_i$.

  Now consider a countable set of points $\{p_i\}_i$ dense in ${\cal
    B}$, the convergence domain associated to the subsequence obtained
  in the previous argument.  Using Lemma~\ref{lem:conv_div}-{\it 1}
  and a diagonal argument, we obtain a subsequence of $\{u_n\}$ such
  that $\{u_n-u_n(p)\}$ converges uniformly on compact sets of
  $\Omega'$ to a minimal graph, for every component $\Omega'$ of
  ${\cal B}$ and every $p\in \Omega'$. This finishes the proof of
  Proposition~\ref{prop:conv}.
\end{proof}

\begin{remark}
  In Proposition~\ref{prop:conv} we can remove the hypothesis ${\cal
     D}$ is a countable set of divergence lines, and we obtain that,
  after passing to a subsequence, ${\cal D}$ is composed of pairwise
   disjoint divergence lines and, up to a vertical translation, we have
  uniform convergence on compact sets of each component of the
   convergence domain ${\cal B}$. The proof of this fact is more
   involved and will be included in~\cite{cor3}. 

  We will only use Proposition~\ref{prop:conv} in the case the
  divergence set ${\cal D}$ is composed of a finite number of
  divergence lines.
\end{remark}

Let $\{u_n\}$ be a subsequence given by Proposition \ref{prop:conv}. We
consider $\Omega'$ a connected component of $\boB$. Its boundary is
composed of subarcs of $\partial\Omega$ and divergence lines. Let us
understand the limit $u$ of $\{u_n-u_n(p)\}$ in $\Omega'$
($p\in\Omega'$). Let $T$ be a subarc of $\partial\Omega'$ included in
a divergence line. From the convergence of $\{N_n\}$ along $T$,
$F_{u_n}(T)$ converges to $\pm|T|$. Since $|X_{u_n}|$ is bounded by
$1$, this implies that $F_u(T)=\pm|T|$. Then by Lemma \ref{lem:xulf}, $u$
takes value $\pm\infty$ on $T$. In fact we have a stronger result.

\begin{lemma}\label{lem:div}
Let $\{u_n\}$ be a sequence of minimal graphs on $\Omega$. We
assume that $\{u_n\}$ converges to a minimal graph $u$ on a connected
subdomain $\Omega'$ of $\Omega$. Let $T$ be a subarc in $\partial\Omega'$
included in a divergence line for the sequence $\{u_n\}$ such
that $X_{u_n}\rightarrow \nu$ along $T$ with $\nu$ the outgoing normal to
$\Omega'$. Then if $p\in\Omega'$ and $q\in T$ we have 
$$
\lim_{n\rightarrow+\infty} u_n(q)-u_n(p)=+\infty
$$
\end{lemma}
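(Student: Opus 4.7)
The plan is to argue by contradiction. After passing to a subsequence, suppose $u_n(q)-u_n(p)$ stays bounded above. By Proposition~\ref{prop:conv}, replacing each $u_n$ by $u_n-u_n(p)$, we may assume $u_n(p)=0$, $u_n\to u$ uniformly on compact subsets of $\Omega'$ with $u(p)=0$, and that $\{u_n(q)\}$ is bounded.

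\textbf{Step 1 (boundary value of $u$ on $T$).} From the hypothesis $X_{u_n}\to\nu$ pointwise on $T$ and $|X_{u_n}|\le 1$, dominated convergence gives $F_{u_n}(T')\to |T'|$ on any relatively compact subarc $T'\Subset T$. Since $u_n\to u$ smoothly on compacta of $\Omega'$, one can pass to the limit in the flux integral and deduce $F_u(T')=|T'|$. By Lemma~\ref{lem:xulf}, $u$ diverges to $+\infty$ as one approaches $T$ from within $\Omega'$.

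\textbf{Step 2 (geometric convergence of the graphs).} Since $q$ is interior to $\Omega$, Schoen's curvature estimates~\cite{sc3} yield a uniform bound on the second fundamental form of $G(u_n)$ in an $\mathbb{H}^2\times\mathbb{R}$-ball of fixed radius $r_0>0$ around $(q,u_n(q))$. Using the assumption that $\{u_n(q)\}$ is bounded, extract a further subsequence with $u_n(q)\to q_\infty\in\mathbb{R}$. Standard compactness of bounded-curvature minimal surfaces yields smooth convergence of $G(u_n)\cap B_{r_0}((q,q_\infty))$ to a minimal surface $S$ whose tangent plane at $(q,q_\infty)$ is the vertical plane $T\times\mathbb{R}$ (since $N_n(q)\to(\nu,0)$). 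Arguing exactly as in the proof of Lemma~\ref{lem:conv_div}-2 (each $G(u_n)$ being a graph has a coherent downward normal, which prevents the limit from crossing its vertical tangent plane), we get $S\subset T\times\mathbb{R}$ in $B_{r_0}((q,q_\infty))$.

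\textbf{Step 3 (choice of a witness point).} Fix $M=q_\infty+r_0/2$. By Step~1, $u$ tends to $+\infty$ as one approaches $T$ within $\Omega'$; along a curve inside $\Omega'$ starting at $p$ (where $u=0$) and ending near $T$ close to $q$, $u$ passes through the value $M$ by the intermediate value theorem. A simple topological argument, using that $u\to+\infty$ on $T$ near $q$ together with Lemma~\ref{lem:convexhull}, lets one choose such a crossing point $x^*\in\Omega'$ with $d_{\mathbb{H}^2}(x^*,q)<r_0\sqrt{3}/2$ and $\eta:=d_{\mathbb{H}^2}(x^*,T)>0$. In particular, $(x^*,M)\in B_{r_0}((q,q_\infty))$.

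\textbf{Step 4 (contradiction).} Since $x^*\in\Omega'$, $u_n(x^*)\to u(x^*)=M$, so for $n$ large $(x^*,u_n(x^*))\in B_{r_0}((q,q_\infty))$, and the limit point $(x^*,M)$ belongs to $S\subset T\times\mathbb{R}$. But the $\mathbb{H}^2$-projection of $(x^*,M)$ is $x^*$, which satisfies $d_{\mathbb{H}^2}(x^*,T)=\eta>0$, contradicting $(x^*,M)\in T\times\mathbb{R}$.

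The hard part is Step~2: establishing that the limit surface $S$ is contained in the vertical plane $T\times\mathbb{R}$. This mirrors the key geometric argument already used in Lemma~\ref{lem:conv_div} and relies on the fact that each $G(u_n)$ has a consistent orientation as a graph. The remaining steps are essentially verifications: Step~1 is a routine limit of fluxes combined with Lemma~\ref{lem:xulf}, while Steps~3--4 reduce to finding a single point in $\Omega'$ at moderate height $M$ and at positive distance from $T$, which is ensured by the explicit $+\infty$ boundary behavior of $u$ on $T$.
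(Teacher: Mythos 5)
Your Steps 1 and 2 track the paper's own proof (flux convergence plus Lemma~\ref{lem:xulf} gives $u=+\infty$ on $T$; the blow-up at $q$ to a vertical disk contained in $T\times\R$ is exactly Lemma~\ref{lem:conv_div}-\textit{2}). The gap is in Step 4, and it is fatal as written. The surface $S$ for which the orientation argument of Lemma~\ref{lem:conv_div}-\textit{2} proves $S\subset T\times\R$ is the limit of the \emph{connected geodesic-graph piece} of $G(u_n)$ around $(q,u_n(q))$; that argument (tangency to a vertical plane forces containment) only applies to that connected piece. The point $(x^*,M)$ is the limit of $(x^*,u_n(x^*))$, which for large $n$ does \emph{not} lie in that piece: in the very scenario you must exclude, $u_n$ along the normal segment from $x^*$ to $q$ first rises to roughly $u(y)$ for $y\in\Omega'$ close to $T$ (which is arbitrarily large by Step 1 and locally uniform convergence on $\Omega'$) and then falls back to $u_n(q)\approx q_\infty$; so the graph over that segment exits $B_{r_0}((q,q_\infty))$ and re-enters, and $(x^*,u_n(x^*))$ and $(q,u_n(q))$ lie in different components of $G(u_n)\cap B_{r_0}$. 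Hence $(x^*,M)$ sits in the limit of a different leaf (the graph of $u$ over $\Omega'$), not in $S$, and no contradiction results. If instead you take $S$ to be the limit of all of $G(u_n)\cap B_{r_0}$, then the inclusion $S\subset T\times\R$ fails for the same reason. Either way the "contradiction" evaporates.

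What is missing is precisely an argument forbidding this rise-and-fall of $u_n$ between $\Omega'$ and $q$, and this is the heart of the paper's proof: in half-plane coordinates with $T\subset\{x=0\}$ and $\Omega'\subset\{x<0\}$, it shows $\dfrac{\partial u_n}{\partial x}>0$ on $\{-\epsilon<x\le 0,\ y=0\}$ for large $n$. This monotonicity is proved by contradiction: a critical point $q_n$ of $x\mapsto u_n(x,0)$ must converge to $q$, must have $|\nabla u_n(q_n)|\to\infty$ (otherwise the gradient stays bounded at $q$, contradicting that $q$ is on a divergence line), and then the geodesic disk of $G(u_n-u_n(q_n))$ at $q_n$ converges to the vertical plane $\{y=0\}\times\R$, transverse to the vertical plane $\{x=0\}\times\R$ obtained at $q$ --- impossible, since both disks are vertical translates of pieces of the same graph and so carry the same normal over any common base point. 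With the monotonicity in hand, $u_n(q)\ge u_n(q_t)\to u(q_t)$, which is arbitrarily large. Your proof needs this step (or an equivalent comparison) to close. A secondary issue: you pass from "$u_n(q)-u_n(p)$ bounded above" to "$\{u_n(q)\}$ bounded", leaving the case $u_n(q)-u_n(p)\to-\infty$ untreated.
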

\begin{proof}
Since $X_{u_n}\rightarrow \nu$ on $T$, $F_{u_n}(T)$ converges to $|T|$.
Thus $u$ takes the value $+\infty$ on $T$. Let $p$ and $q$ be as in
the lemma and consider the disk model for $\H^2$ assuming that
$q$ is at the origin, $T$
is a subarc of $\{x=0\}$ and $\nu$ points to the half-plane $\{x\ge 0\}$.
Let us prove: 
\begin{equation*}\label{croissance}
\text{There is }\epsilon >0\text{ such that }\dfrac{\partial
u_n}{\partial x}> 0\text{ on }\{-\epsilon<x\le 0,y=0\}\text{ for large
}n.
\tag{$*$}
\end{equation*}
Since $u=+\infty$ on $T$ there is $\epsilon >0$ such that $\dfrac{\partial
u}{\partial x}\ge 1$ on $\{-\epsilon<x<0,y=0\}$. The convergence
$u_n\rightarrow u$ implies : for every $0<\eta<\epsilon$, $\dfrac{\partial
u_n}{\partial x}> 0$ on $\{-\epsilon<x<-\eta,y=0\}$ for large $n$. If
\eqref{croissance} is not true, considering
a subsequence if necessary, there is $q_n$ in
$\{-\epsilon<x\le 0,y=0\}$ with $\dfrac{\partial u_n}{\partial x}(q_n)= 0$.
Observe that it must be $q_n\rightarrow q$.

If the sequence $\{|\nabla u_n(q_n)|\}$ is bounded, $|\nabla u_n|$ is
uniformly bounded in a uniform disk around $q_n$. Since $q_n\rightarrow q$,
the sequence $\{|\nabla u_n(q)|\}$ is bounded which is false since $q$
lies on a divergence line. Hence, passing to a subsequence, we can assume
that $|\nabla
u_n(q_n)|\rightarrow +\infty$. Let $D_n^1$ be the $\delta$-geodesical
disk centered at $(q_n,0)$ in the graph of
$u_n-u_n(q_n)$ ($\delta$ is fixed small enough with respect to the
distance from $q$ to $\partial\Omega$). Since $\dfrac{\partial
u_n}{\partial x}(q_n)= 0$ we can prove as in Lemma \ref{lem:conv_div}
 that the sequence $\{D_n^1\}$ converges to the vertical disk in
$\{y=0\}\times\R$ centered at $(q,0)$ of radius $\delta$. Let $D_n^2$ be
the $\delta$-geodesical disk centered at $(q,0)$ in the graph of
$u_n-u_n(q)$. Since $T$ is part of a divergence line, $\{D_n^2\}$ converges
to the vertical disk in $\{x=0\}\times\R$ centered at $(q,0)$ or radius
$\delta$.
Because of both convergences, for large $n$, $D_n^1$ and $D_n^2$
intersect transversally. But this is impossible, since their normal vectors
at a point depends only on $\nabla u_n$.

Assertion \eqref{croissance} is then proved. Let $q_t$ be the point of
coordinates $(-t,0)$. Since $u$ takes the value $+\infty$ at $q$ we can
make $u(q_t)-u(p)$ as large as we want by taking $t$ small . Besides,
for large $n$, \eqref{croissance} gives $u_n(q)-u_n(p)\ge u_n(q_t)-u_n(p)$.
Since $u_n\rightarrow u$, we get $u_n(q)-u_n(p)\ge u(q_t)-u(p)-1$. This
proves the lemma.
\end{proof}

\begin{remark}\label{rem:plantas}
  Let $L$ be a divergence line and suppose there exist two components
  $\Omega_1,\Omega_2$ of ${\cal B}$ such that
  $L\subset\partial\Omega_i$, $i=1,2$. Consider points
  $p_1\in\Omega_1,\ p_2\in\Omega_2$. Passing to a subsequence,
  $\{u_n-u_n(p_i)\}$ converges uniformly on compact sets of
  $\Omega_i$ to a minimal graph $u_i:\Omega_i\to\R$. Assume
  $F_{u_1}(T)=|T|$ for each bounded arc $T\subset L$, when $L$ is
  oriented as $\partial\Omega_1$. Then $F_{u_2}(T)=-|T|$, when $L$ is
  oriented as $\partial\Omega_2$.  We deduce from Lemma~\ref{lem:div}
  that $\{(u_n-u_n(p_1))|_L\}$ diverges to $+\infty$ and
  $\{(u_n-u_n(p_2))|_L\}$ diverges to $-\infty$. In particular, we
  can deduce that $\{u_n-u_n(p_1)\}$ diverges uniformly on
  compact sets of $\Omega_2$ to $+\infty$. 
\end{remark}

Now, we are going to exclude the existence of some divergence lines
under additional constraints. In particular, if there exists minimal
graphs $w^+,w^-$ defined on a neighborhood ${\cal
  U}\subset\overline\Omega$ of a point $p\in\partial\Omega$ such that
$w^-\leq u_n\leq w^+$ for every $n$, then a divergence line cannot
arrive at $p$. We will state conditions for which such barriers exist.

\begin{proposition} \label{prop:lindiv} Let $\{u_n\}$ be the
  subsequence given by Proposition~\ref{prop:conv}. 
  \begin{enumerate}
  \item Let $C\subset\partial_\infty\Omega$ be a smooth arc where each
$u_n$
    extends continuously and suppose $\{u_n|_C\}$ converges to a
    continuous function $f$.  Then a divergence line $L_i$ cannot
    finish at an interior point of $C$.
  \item For every $n$, suppose there exists $M_n\geq 0$ such that
    $|u_n|\leq M_n$, and let $T\subset\partial\Omega$ be a bounded
    geodesic arc where $u_n$ extends continuously and $u_n|_T=M_n$ or
    $-M_n$. Then a divergence line cannot finish at an interior point
    of $T$.
\end{enumerate}
\end{proposition}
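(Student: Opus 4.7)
We argue both parts by contradiction. Assume a divergence line $L$ ends at an interior point $p$ of $C$ (in part~1) or $T$ (in part~2). Near $p$, $L$ locally separates two components $\Omega_1,\Omega_2$ of $\boB$, and Remark~\ref{rem:plantas} applied to these components gives points $p_i\in\Omega_i$ with $u_n-u_n(p_i)\to u_i$ locally uniformly on $\Omega_i$, where $u_1=+\infty$ and $u_2=-\infty$ along $L$ on the respective sides, together with
\begin{equation*}
  u_n(p_2)-u_n(p_1)\,\longrightarrow\,+\infty.\tag{$*$}
\end{equation*}
The strategy is to combine~$(*)$ with the hypothesis on $C$ (resp.\ $T$) to force $u_1=+\infty$ (or $u_2=-\infty$) along the sub-arc of $C$ (resp.\ $T$) adjacent to $p$, which then contradicts Remark~\ref{rem:corner} (or its sign-reversed analogue) at the strictly convex corner formed by the two distinct geodesics $L$ and $C$ (resp.\ $T$), meeting transversally at $p$.

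Part~2 is the cleaner case. The bound $u_n\le M_n$ gives non-negative sequences $\lambda_n^{(i)}:=M_n-u_n(p_i)\ge 0$; after passing to a subsequence, $\lambda_n^{(i)}\to\lambda^{(i)}\in[0,+\infty]$, and~$(*)$ rewrites as $\lambda^{(1)}-\lambda^{(2)}=+\infty$, which forces $\lambda^{(1)}=+\infty$ since $\lambda^{(2)}\ge 0$. On a compact geodesic sub-arc $T_0\subset T\cap\partial\Omega_1$ containing a neighborhood of $p$ on the $\Omega_1$ side, the sequence $(u_n-u_n(p_1))|_{T_0}=\lambda_n^{(1)}$ diverges uniformly to $+\infty$, while $\{u_n-u_n(p_1)\}$ remains uniformly bounded on compact subsets of $\Omega_1$. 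Lemma~\ref{lem:limitflux}-(i) then gives $F_{u_n-u_n(p_1)}(T_0)\to|T_0|$, which in the limit becomes $F_{u_1}(T_0)=|T_0|$, and Lemma~\ref{lem:xulf} yields $u_1=+\infty$ on $T_0$. Combined with $u_1=+\infty$ on $L$ at the strictly convex corner at $p$, Remark~\ref{rem:corner} provides the contradiction. The case $u_n|_T=-M_n$ is symmetric.

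For part~1, pass to a subsequence so $u_n(p_i)\to\alpha_i\in[-\infty,+\infty]$;~$(*)$ forces $\alpha_1=-\infty$ or $\alpha_2=+\infty$, and by the symmetry $\Omega_1\leftrightarrow\Omega_2$ one may focus on the representative case $\alpha_1=-\infty$ (the residual sub-cases where both $\alpha_i$ are infinite of the same sign with the appropriate rate also fall under one of these two labels). Then continuity of $f$ and uniform convergence of $\{u_n|_C\}$ on compacta give $(u_n-u_n(p_1))|_{C_0}\to+\infty$ uniformly on a compact sub-arc $C_0\subset C\cap\partial\Omega_1$, while $\{u_n-u_n(p_1)\}$ is locally bounded on $\Omega_1$. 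When $C$ is locally a geodesic of $\H^2$ near $p$, Lemma~\ref{lem:limitflux}-(i) and Lemma~\ref{lem:xulf} again produce $u_1=+\infty$ on $C_0$, contradicting Remark~\ref{rem:corner}. When $C$ is not a geodesic of $\H^2$ (in particular when $C\subset\partial_\infty\H^2$), the Sa~Earp/Abresch graphs $\pm h_{\pi/2}+M$ of Lemma~\ref{lem:barrier}, placed in polar coordinates around $p$ with $M:=\sup_{C_0}|f|+1$, serve as upper-and-lower barriers on a wedge-shaped subdomain of $\Omega_1$ bounded by $C_0$, a sub-arc of $L$ adjacent to $p$, and a convex closing arc; Theorem~\ref{th:max} then yields $|u_n|\le h_{\pi/2}+M$ on this wedge, and Schoen's curvature estimates~\cite{sc3} bound $|\nabla u_n|$ uniformly on interior compact subsets of the wedge, contradicting the existence of the divergence line $L$ through it.

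The main technical obstacle in both parts is the control of $u_n$ on the ``closing'' portion of the sub-domain's boundary. For geodesic arcs this is handled automatically by Lemma~\ref{lem:limitflux} together with Lemma~\ref{lem:xulf}. For arcs at $\partial_\infty\H^2$ one needs the Sa~Earp barriers placed so that on the closing arc (e.g., a horocycle segment, or an equidistant arc suitably far from $L$) the barrier value already dominates $|u_n|$; this may require an iterative shrinking of the wedge, or an invocation of Lemma~\ref{lem:convexhull}-(i) on any strictly convex portions of $\partial\Omega$ near $p$, to propagate the bound from $C_0$ to the closing arc.
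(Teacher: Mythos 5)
The essential gap is the one you flag yourself, and it occurs in what is really the main case of part~1: $C$ an arc at $\partial_\infty\H^2$ or a strictly convex arc (for such arcs the flux route is unavailable, since Lemma~\ref{lem:flux}-(ii) forbids $|F_u(C)|=|C|$ on convex non-geodesic arcs and flux across an arc of $\partial_\infty\H^2$ is not even defined, so a barrier argument is unavoidable). Your wedge is bounded by $C_0$, a sub-arc of $L$ and a ``closing arc'', and to run the maximum principle you must dominate $u_n$ on the sub-arc of $L$ (where $u_n-u_n(p_1)$ is unbounded) and on the closing arc, where $u_n$ is an interior value over which you have no control; none of the suggested fixes (iterative shrinking, Lemma~\ref{lem:convexhull}-(i), pushing the closing arc far from $L$) produces such a bound, because the hypotheses only control $u_n$ on $\partial_\infty\Omega$. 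The paper removes the problem at the source: take $C'\subset C$ a small neighbourhood of $p$, let $\Gamma(C')$ be the geodesic joining the endpoints of $C'$, and let $\Delta\subset\Omega$ be the region between $C'$ and $\Gamma(C')$. The barriers $w^\pm$ (from Lemma~\ref{lem:barrier} or Theorem~\ref{th1}) equal $\pm(M+1)$ on $C'$ and $\pm\infty$ on $\Gamma(C')$, so the comparison $w^-\le u_n\le w^+$ requires no information about $u_n$ on any interior arc; the compactness theorem then gives $\Delta\subset\boB$, so no divergence line enters $\Delta$ at all. Note that this argument never mentions the components of $\boB$ adjacent to $L$ — $L$ would simply cross the interior of $\Delta$, which is impossible.

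For part~2 and for part~1 with $C$ geodesic, your flux-plus-corner argument is a genuinely different route from the paper's (which handles geodesic arcs by Schwarz reflection: constancy of $u_n$ on $C$ forces $N_n(p)$ to converge to a vector orthogonal to $C$, incompatible with orthogonality to $L\neq C$; non-constant data is reduced to constant data by an $\ve$-perturbation, and part~2 reduces to part~1 by subtracting $u_n(p)$). Your version is attractive because it treats non-constant $f$ directly, but it silently assumes that each of the two components $\Omega_1,\Omega_2$ of $\boB$ flanking $L$ near $p$ contains a sub-arc of $C$ (resp.\ $T$) ending at $p$ in its boundary. Proposition~\ref{prop:conv} only guarantees that $\mathcal{D}$ is a countable union of pairwise disjoint lines, so other divergence lines may end at $p$, or at points of $C$ accumulating at $p$, and insert themselves between $L$ and $C$; then the component to which Remark~\ref{rem:plantas} sends you has no arc $C_0$ or $T_0$ to take the flux across. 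The paper's reflection argument uses only the behaviour of $N_n$ at the single point $p$ and is immune to this. You would need either to rule out such accumulation or to rework the argument for the component of $\boB$ actually adjacent to $C$ near $p$, for which Remark~\ref{rem:plantas} no longer supplies the sign of $u_n(p_2)-u_n(p_1)$.
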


\begin{proof}
  Let $C\subset\partial_\infty\Omega$ be an arc as in item {\it 1}. 
Suppose
  $C$ is either an arc at $\partial_\infty\H^2$ or a strictly convex
  arc (with respect to $\Omega$). Let $p\in C$ and $C'$ be a
  neighborhood of $p$ in $C$ such that $\overline{C'}\subset C$.
  Consider the geodesic $\Gamma(C')\subset\H^2$ joining the endpoints
  of $C'$, and define the domain $\Delta\subset\H^2$ bounded by
  $C'\cup\overline{\Gamma(C')}$. For $C'$ small enough, we
  can assume $\Delta\subset\Omega$.

  Define $M=\max_{C'}|f|$. For $n$ big enough and $C'$ small enough,
  $|u_n|< M+1$ on $C'$, for every $n$.  Consider $w^+,w^-:\Delta\to\R$
  minimal graphs with boundary values
  \[
  \left\{\begin{array}{ll}
      w^+=M+1      & \mbox{, on } C'\\
      w^+=+\infty & \mbox{, on } \Gamma(C')
    \end{array}\right.
  \qquad \left\{\begin{array}{ll}
      w^-=-M-1      & \mbox{, on } C'\\
      w^-=-\infty & \mbox{, on } \Gamma(C')
    \end{array}\right.
  \]
(they exist by Lemma~\ref{lem:barrier} and Theorem~\ref{th1}, depending
on the case).  By the general maximum principle, $w^-\leq u_n\leq w^+$
  for every $n$.  Therefore, the Compactness Theorem says
  $\Delta\subset {\cal B}$, and so no divergence line finishes at $p$.

  Now suppose that $C$ is geodesic and $u_n|_C=c\in\R$ for every $n$.
  We can assume without loss of generality $c=0$. By reflecting the
  graph surface of $u_n$ about $C$, we obtain a minimal surface
  $\Sigma$ containing $C$, whose normal vector along $C$ is orthogonal
  to $C$. If there exists a divergence line $L$ with an endpoint at $p\in
C$, then we conclude $N_n(p)$ converges to a horizontal vector
  orthogonal to $L$. But this is impossible, since such a vector must
  be orthogonal to $C$. Hence, no divergence line finishes at $C$.

  Finally, suppose $C$ is geodesic and there exists a divergence line
  $L$ with endpoint $p\in C$. Fix $\ve>0$. Since $\{u_n|_C\}$
  converges to a continuous function $f$, there exists a small
  neighborhood $C'\subset C$ of $p$ such that $|u_n(q)-f(p)|<\ve$, for
  every $q\in C'$ and $n$ large enough. Consider a neighborhood ${\cal
    U}\subset\Omega\cup C$ of $p$ containing $C'$, and define
  $v_n:{\cal U}\to\R$ as the minimal graph with boundary values
  \[
  \left\{\begin{array}{ll}
      v_n=f(p) & \mbox{, in } C'\\
      v_n=u_n  & \mbox{, in } \partial{\cal U}-C'
    \end{array}\right.
  \]
  (it exists by Theorem~\ref{th1}). The general maximum principle for
  bounded domains assures
  \begin{equation}\label{eq:v_n}
  u_n-\ve\leq v_n\leq u_n+\ve.
  \end{equation}
  Next we prove that $L\cap{\cal U}$ is a divergence line for
  $\{v_n\}$, conveniently choosing $\ve$ and ${\cal U}$. Fix a point
  $q\in L\cap{\cal U}$. From the proof of Lemma~\ref{lem:conv_div}, we
  deduce there exists a neighborhood of $(q,0)$ in the graph  
$G(u_n-u_n(q))$ converging to the disk $D_L(q,\delta)\subset
  L\times\R$ of radius $\delta$ centered at $(q,0)$. Taking
  $\ve\leq\delta/2$, we conclude
  using (\ref{eq:v_n}) that a neighborhood of the point
  $(q,v_n(q)-u_n(q))$ in $G(v_n-u_n(q))$ converges to $D_L(q,\delta)$,
  and $L\cap{\cal U}$ is a divergence line for $\{v_n\}$
  (see~\cite{mazet0}, Proposition 1.4.8, for a detailed proof). But we
  know from the above argument this is not possible, as $v_n$ is
  constant on $C'$. This finishes item {\it 1}.

    Now, consider $T$ as in the hypothesis of {\it 2}, and let $p\in T$.
  Define $v_n=u_n-u_n(p)$ for every $n$. Clearly, $v_n|_T=0$ for every
  $n$. Then we obtain from item {\it 1} that a divergence line for
  $\{v_n\}$ cannot finish at $T$. Since the divergence lines
  associated to $\{u_n\}$ coincide with those of $\{v_n\}$, we
  have proved  Proposition~\ref{prop:lindiv}.
\end{proof}  

%%%%%%%%%%%%%%
\subsubsection{Solving the Jenkins-Serrin problem on unbounded domains}
\label{solutiondirichlet}
%%%%%%%%%%%%%%

   \begin{figure}\begin{center}
       \epsfysize=7cm \epsffile{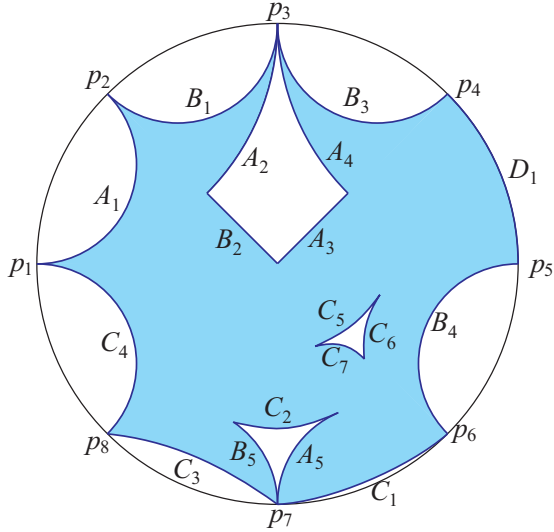}
     \end{center}
     \caption{An ideal Scherk domain.}
     \label{fig:general}
   \end{figure}

  Let $\Omega\subset\H^2$ be a domain whose boundary
$\partial_\infty\Omega$ consists of a
  finite number of geodesic arcs $A_i,B_i$, a finite number of
   convex arcs $C_i$ (convex towards $\Omega$) and a finite number
  of open arcs $D_i$ at $\partial_\infty\H^2$, together with their
  endpoints, which are called the vertices of $\Omega$ (see
Figure~\ref{fig:general}). We mark the $A_i$ edges by
  $+\infty$, the $B_i$ edges by $-\infty$, and assign arbitrary
  continuous data $f_i,g_i$ on the arcs $C_i,D_i$, respectively.
  Assume that no two $A_i$ edges and no two $B_i$ edges meet at a
  convex corner.  We will call such a domain $\Omega$ an {\it
    ideal Scherk domain}.

A polygonal domain $\mathcal{P}$ is said to be inscribed in $\Omega$ 
if ${\mathcal{P}}\subset\Omega$ and its vertices are
among the endpoints of the arcs $A_i,B_i,C_i$ and $D_i$; we notice that a
vertex may be in
$\partial_\infty\H^2$ and an edge may be one of the $A_i$ or $B_i$ (see
Figure~\ref{fig:horociclos}).

   For each ideal vertex $p_i$ of $\Omega$ at $\partial_\infty\H^2$, we
   consider a horocycle $H_i$ at $p_i$.  Assume $H_i$ is small enough
   so that it does not intersect bounded edges of $\partial\Omega$ and
   $H_i\cap H_j=\emptyset$ for every $i\neq j$. Given a polygonal
   domain ${\cal P}$ inscribed in $\Omega$, we denote by $\Gamma({\cal
P})$ the part of $\partial{\cal P}$ outside the horocycles, and (see
   Figure~\ref{fig:horociclos})
  \[
  \g=|\Gamma({\cal P})|, \qquad
  \a=\sum_i|A_i\cap\Gamma({\cal P})| \qquad {\rm and}\qquad
  \be=\sum_i|B_i\cap\Gamma({\cal P})| .
  \]
  \begin{figure}\begin{center}
      \epsfysize=7cm \epsffile{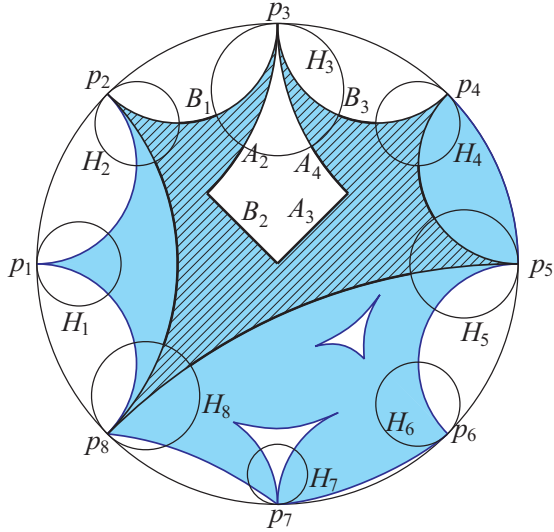}
    \end{center}
    \caption{An inscribed polygonal domain in $\Omega$}
    \label{fig:horociclos}
  \end{figure}

  \begin{theorem}
    \label{th2}
    If there is at least one edge $C_i$ or $D_i$ in
$\partial_\infty\Omega$,
    then a solution to the Dirichlet problem on $\Omega$ exists if and
    only if the horocycles $H_i$ can be chosen so that
    \begin{equation}
      \label{hipJS2} 
      2\a<\g\qquad\mbox{and }\qquad 2\be<\g
    \end{equation}
    for every polygonal domain ${\cal P}$ inscribed in $\Omega$.
  \end{theorem}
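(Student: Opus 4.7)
The plan is to follow the template of Theorem~\ref{th1} but with the exhaustion/compactness machinery of Section~\ref{secH2} replacing the monotone convergence argument, since we no longer have a maximum principle at our disposal.

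For necessity I would argue by flux. Given a solution $u$ and an inscribed polygonal ${\cal P}$, truncate ${\cal P}$ by the horocycles $H_i$ to obtain a compact domain ${\cal P}^*$. Apply $F_u(\partial{\cal P}^*)=0$ from Lemma~\ref{lem:flux}$(i)$. The $A_i$-parts of $\Gamma({\cal P})$ contribute exactly $\alpha$ by Lemma~\ref{lem:flux}$(iii)$, the $B_i$-parts contribute $-\beta$, the interior geodesic chord pieces of $\Gamma({\cal P})$ are controlled strictly by their length via Lemma~\ref{lem:flux}$(ii)$, and the horocycle arcs can be made arbitrarily small in length by shrinking the $H_i$ (in the half-plane model a horocyclic arc at an ideal point has hyperbolic length tending to $0$). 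Rearranging $0=\alpha-\beta+F_u(\text{chords})+F_u(\text{horocycles})$ and using the strict inequality on the chord contribution yields $2\alpha<\gamma$ and $2\beta<\gamma$ after the horocycles are taken small enough.

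For sufficiency I would exhaust $\Omega$ by bounded Scherk domains $\Omega_n\subset\Omega$ obtained by truncating each ideal vertex by a smaller horocycle $H_i^n$ inside $H_i$ and replacing each $D_i$ by a nearby equidistant curve (convex towards $\Omega_n$). On $\Omega_n$ assign the original data $f_i$ on $C_i$, the truncated data $\max(-n,\min(n,g_i))$ on the equidistant approximation of $D_i$, the constants $\pm n$ (or the data coming from the barriers of Lemma~\ref{lem:barrier}) on the horocyclic pieces, and $+\infty$, $-\infty$ on $A_i$, $B_i$. Since $\Omega_n$ carries at least one $C_i$-type edge, Theorem~\ref{th1} produces a minimal graph $u_n$ on $\Omega_n$. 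Invoke Proposition~\ref{prop:conv} to extract a subsequence whose divergence set ${\cal D}$ is a pairwise disjoint union of divergence lines $L_i$, and which converges on compact subsets of each component of ${\cal B}=\Omega\setminus{\cal D}$ after subtracting $u_n(p)$ at a basepoint.

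The main obstacle is to prove ${\cal D}=\emptyset$. Proposition~\ref{prop:lindiv}$(1)$ immediately rules out any $L_i$ ending at an interior point of a $C_i$ or $D_i$ edge, so each divergence line must terminate at a vertex of $\Omega$ (possibly at $\partial_\infty\H^2$), which is a finite set; hence ${\cal D}$ is a finite union of chords. Then I would form a polygonal domain ${\cal P}^*$ bounded by these divergence lines together with the arcs of $\partial\Omega$ on one side, truncate by the horocycles $H_i$, and compute fluxes of $u_n$ in the limit: by the convergence of $N_n$ to a horizontal vector along each $L_i$ (Lemma~\ref{lem:conv_div} and Lemma~\ref{lem:div}), the flux across each divergence line tends to $\pm|L_i|$, exactly as in the third step of the proof of Theorem~\ref{th1} and of the Divergence Set Theorem. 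Collecting the contributions along $A_i$-pieces ($\alpha$), $B_i$-pieces ($-\beta$), $C_i/D_i$-pieces (strictly bounded by their length), divergence lines and the vanishing horocycle contributions, one derives $2\alpha\geq\gamma$ or $2\beta\geq\gamma$ for an inscribed polygonal, contradicting~(\ref{hipJS2}). This forces ${\cal D}=\emptyset$, so $\{u_n-u_n(p)\}$ converges on compact subsets of $\Omega$ to a minimal graph $u$.

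Finally, for the boundary values I would use the Boundary Values Lemma on the $C_i$ arcs and the equidistant/horocyclic barriers from Lemma~\ref{lem:barrier} and Theorem~\ref{th1} as upper and lower obstacles near the $A_i,B_i,D_i$ edges. The delicate point, which is parallel to the fourth case of Theorem~\ref{th1}, is controlling the normalisation constants $u_n(p)$: if no $C_i$ edge is present, one has to check that the boundedness implied by the $D_i$-data keeps $\{u_n(p)\}$ in a compact range; with a $C_i$ edge present this is automatic from barriers.
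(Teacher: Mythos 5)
Your overall architecture (exhaustion by bounded Scherk domains, Theorem~\ref{th1} on the approximations, divergence lines via Proposition~\ref{prop:conv}, exclusion of lines ending on $C_i$ or $D_i$ via Proposition~\ref{prop:lindiv}, and a flux contradiction) is the same as the paper's, but there is a genuine gap at the heart of the flux contradiction. When you ``collect the contributions'' around a component ${\cal P}$ of ${\cal B}$, the flux of the limit across each divergence line $L\subset\partial{\cal P}$ is $\pm|L|$, and \emph{a priori} the signs can differ from one line to another; if they are mixed, the divergence-line terms partially cancel and you do not get $2\a\ge\g$ or $2\be\ge\g$ for that ${\cal P}$. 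One must \emph{choose} a component all of whose boundary divergence lines carry flux $-|T|$ for the outward normal (equivalently, on which the sequence diverges uniformly to $+\infty$ and the normalized limit tends to $-\infty$ on every such line). Establishing the existence of such a component is exactly Claim~\ref{cl:P} in the paper, and it is not automatic: it requires the chain argument ${\cal U}_0,{\cal U}_1,\dots$ built from Lemma~\ref{lem:div} and Remark~\ref{rem:plantas}, which terminates only because ${\cal B}$ has finitely many components. Your appeal to ``exactly as in the third step of the proof of Theorem~\ref{th1}'' does not cover this, because there the sequence is monotone and the signs are coherent for free; here the absence of monotonicity is precisely the difficulty.

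A second, related problem is your choice of boundary data $\pm\infty$ on $A_i,B_i$ for the approximating domains $\Omega_n$. First, Theorem~\ref{th1} then requires the Jenkins--Serrin inequalities for all polygons inscribed in $\Omega_n$ (whose vertices include the new truncation points), which you have not verified; the paper sidesteps this by prescribing the \emph{finite} values $\pm m$, so that only the unconditional first case of Theorem~\ref{th1} is needed, and then takes a double limit $u^n_m\to u_m\to u$. Second, and more importantly, you never exclude divergence lines ending at an \emph{interior} point of an $A_i$ or $B_i$ edge; Proposition~\ref{prop:lindiv}-{\it 2} does this, but its hypothesis is that $|u_n|\le M_n$ with $u_n|_T=\pm M_n$, which is exactly what the data $\pm m$ and the maximum principle give for $u_m$, and which fails for your sequence with data $\pm\infty$. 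Without this step you cannot conclude that the divergence set is a finite union of chords joining vertices, and the flux computation over an inscribed polygonal domain does not get off the ground.
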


  \begin{remark}
    If these conditions hold for some choice of horocycles, then they
    also holds for all smaller horocycles.
  \end{remark}

  \begin{proof}
    Given a vertex $p_i\in\partial_\infty\H^2$ of $\Omega$, we
    consider a sequence of nested horocycles $\{H_{i,n}\}$
    converging to $p_i$. Assume $H_{i,n}\cap H_{j,n}=\emptyset$, for
    every $i\neq j$. Denote by ${\cal H}_{i,n}$ the horodisk bounded
    by $H_{i,n}$. Given an inscribed polygonal domain ${\cal
      P}\subset\Omega$, we call ${\cal P}_n$ the domain bounded by
    $\partial{\cal P}-\cup_i{\cal H}_{i,n}$ together with geodesic
    arcs contained in ${\cal P}\cap\left(\cup_i{\cal H}_{i,n}\right)$
    joining points in $\partial{\cal P}\cap
    \left(\cup_iH_{i,n}\right)$.  Define
    \[
    \g_n=|\partial{\cal P}-\cup_i{\cal H}_{i,n}|,\qquad
    \a_n=\sum_i|A_i\cap\partial{\cal P}_n|,\qquad
    \be_n=\sum_i|B_i\cap\partial{\cal P}_n|.
    \]
    Observe that both sequences $\{2\a_n-\g_n\}$ and
    $\{2\be_n-\g_n\}$ are monotonically decreasing.\\

    Let us first prove the conditions are necessary in Theorem~\ref{th2}.
    Assume there exists a solution $u$ to the Dirichlet problem on
    $\Omega$, and let ${\cal P}\subset\Omega$ be an inscribed polygon.
    Since either $\{C_i\}\neq\emptyset$ or $\{D_i\}\neq\emptyset$,
    there exists  a curve $\eta\subset\partial{\cal P}$ which
    is not an $A_i$ or $B_i$ edge.  Let $\widetilde\eta\subset\eta$ be
    a fixed bounded arc.  Lemma~\ref{lem:flux} assures
    $F_u(\partial{\cal P}_n)=0$, $\sum_iF_u(A_i\cap\partial{\cal
      P}_n)=\a_n$ and $|F_u(\partial{\cal
      P}_n\setminus (\cup_iA_i \cup\widetilde\eta))|\leq
\g_n-\a_n-|\widetilde\eta|$.
    Thus we obtain
    \[
    \a_n\leq \g_n-\a_n-|\widetilde\eta|+|F_u(\widetilde\eta)|+ \ve_n,
    \]
    where $\ve_n=|\partial{\cal P}_n-\partial{\cal P}|$.  This is,
    $2\a_n-\g_n<\ve_n-(|\widetilde\eta|-|F_u(\widetilde\eta)|)$.
    Analogously,
    \[
    2\be_n-\g_n<\ve_n-(|\widetilde\eta|-|F_u(\widetilde\eta)|).
    \]
    Since $|F_u(\widetilde\eta)|<|\widetilde\eta|$ (again by
    Lemma~\ref{lem:flux}) and $\ve_n$ converges to zero as $n$ goes to
    $+\infty$, then $\ve_n<(|\widetilde\eta| -F_u(\widetilde\eta))$
    for $n$ big enough. Therefore, condition (\ref{hipJS2}) is
    satisfied for ${\cal P}$ and the horocycles $H_{i,n}$, for $n$
    large enough.

    Finally, observe there are a finite number of inscribed polygonal
    domains ${\cal P}$ in $\Omega$ (there are a finite number of
    vertices of $\Omega$). Thus we can choose $H_i=H_{i,n}$ for $n$
    large so that (\ref{hipJS2}) is satisfied for any
    inscribed polygonal domain ${\cal P}\subset\Omega$.\\

    Let us now prove the conditions are sufficient.  We choose
    $H_{i,1}=H_i$. Thus we have $2\a_n<\g_n$ and $2\be_n<\g_n$ for every
    $n$.

    We now construct domains $\Omega_n$ converging to $\Omega$.  For
    any vertex $p_i\in\partial_\infty\H^2$ of $\Omega$, we consider a
    sequence of nested ideal geodesics $\G_{i,n}$ converging to
    $p_i$. By nested we mean that, if $\Delta_{i,n}$ is the component
    of $\H^2\backslash\G_{i,n}$ containing $p_i$ at its ideal boundary,
then
    $\Delta_{i,n+1}\subset\Delta_{i,n}$.  Assume $\G_{i,n}\cap
    \G_{j,n}=\emptyset$, for every $i\neq j$, and define
    \[
    A_{i,n}=A_i\setminus \cup_k\Delta_{k,n} ,\quad
B_{i,n}=B_i\setminus \cup_k\Delta_{k,n}\quad
    \mbox{ and}\quad C_{i,n}=C_i\setminus \cup_k\Delta_{k,n}.
    \]
    For $r>0$ big enough, the annulus bounded by $\partial_\infty\H^2$
    and the circle $\esf_{\H^2}(0,r)$ of radius $r$ (in the hyperbolic
    metric) centered at the origin of the Poincar\'e disk, does not
    intersect the bounded components of $\partial\Omega$. Consider a
    monotone increasing sequence of radii $\{r_n\}$ converging to
    $+\infty$. For $r_n$ big enough, we can assume
    $\esf_{\H^2}(0,r_n)$ intersects every geodesic $\G_{k,n}$ twice,
    and define by $D_{i,n}$ the component of
    $\esf_{\H^2}(0,r)\setminus\cup_k\Delta_{k,n}$ converging to $D_i$.  We
can
    naturally assign the values $g_i$ on each $D_{i,n}$.  Finally, let
    us call $\Omega_n$ the domain bounded by the edges
    $A_{i,n},B_{i,n},C_{i,n},D_{i,n}$, and the corresponding geodesic
    arcs $\G_{i,n}^j\subset\G_{i,n}$, together with their endpoints.

   \begin{figure}\begin{center}
       \epsfysize=7cm \epsffile{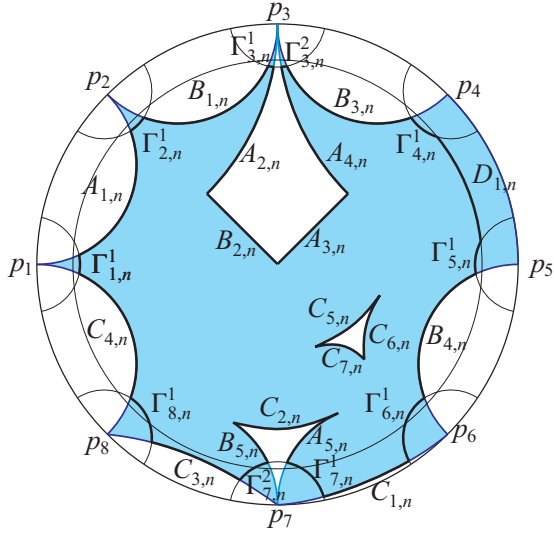}
     \end{center}
     \caption{Construction of the domain $\Omega_n$}
     \label{fig:general_dominio}
   \end{figure}

    Theorem~\ref{th1} assures, for each $m\in\N$, the existence
    of a unique minimal graph $u_m^n:\Omega_n\to\R$ with boundary
    values
    \[
    \left\{\begin{array}{ll}
        u_m^n=m      & \mbox{, on the } A_{i,n} \mbox{ edges.}\\
        u_m^n=-m      & \mbox{, on the } B_{i,n} \mbox{ edges.}\\
        u_m^n=f_{i,m} & \mbox{, on the } C_{i,n} \mbox{ edges.}\\
        u_m^n=g_{i,m} & \mbox{, on the } D_{i,n} \mbox{ edges.}\\
        u_m^n=0 & \mbox{, on the geodesic arcs } \Gamma_{i,n}^j.
      \end{array}\right.
    \]
    where $f_{i,m}$ (resp.  $g_{i,m}$) denotes the function $f_i$
    (resp. $g_i$) truncated above and below by $m$ and $-m$,
    respectively. By the maximum principle for bounded
    domains, $-m\leq u_m^n\leq m$, for every $n$. Then we can
    extract, by using the compactness theorem and a diagonal argument,
    a subsequence of $\{u_m^n\}_n$ converging uniformly on compact
    subsets of $\Omega$ to a minimal graph $u_m:\Omega\to[0,m]$ with
    boundary data
  \[
  \left\{\begin{array}{ll}
      u_m=m      & \mbox{, on the } A_i \mbox{ edges.}\\
      u_m=-m      & \mbox{, on the } B_i \mbox{ edges.}\\
      u_m=f_{i,m} & \mbox{, on the } C_i \mbox{ edges.}\\
      u_m=g_{i,m} & \mbox{, on the } D_i \mbox{ edges.}\\
    \end{array}\right.
  \]
  Such boundary data are obtained from a standard barrier argument,
  using as barriers the ones described in~\cite{cor2}.
  
  We are going to prove that a subsequence of $\{u_m\}$ converges to
  a solution to the Dirichlet problem on $\Omega$, proving
  Theorem~\ref{th2}.  We know from Proposition~\ref{prop:lindiv}
  that divergence lines for $\{u_m\}$ can only arrive at vertices of
  $\Omega$. In particular, there exists a finite number of divergence
  lines, and so ${\cal B}\neq\emptyset$.

  Passing to a subsequence, we can assume $\{u_n\}$ satisfies
  Proposition~\ref{prop:conv}.  Now suppose by contradiction that
  ${\cal B}\neq\Omega$; i.e., suppose there exists a divergence line
  $L\subset{\cal D}$.  We then deduce from Remark~\ref{rem:plantas}
  there exists a component ${\cal P}\subset{\cal B}$ such that
  $\{u_n\}$ diverges uniformly on compact sets of ${\cal P}$, say to
  $+\infty$ (the case $-\infty$ follows similarly).  Take a point
  $p\in{\cal P}$. Then $\{u_n-u_n(p)\}$ converges uniformly on
  compact subsets of ${\cal P}$ to a minimal graph $u:{\cal P}\to\R$.
  Observe that $u$ diverges to $-\infty$ as we approach any edge in
  $\partial{\cal P}\cap\left(\partial\Omega-\cup_i A_i\right)$ within
  ${\cal P}$. We then get ${\cal P}$ is a polygonal domain and
  $F_u(T)=-|T|$ for every bounded arc $T\subset\partial{\cal
    P}\cap\left(\partial\Omega-\cup_i A_i\right)$.

  \begin{claim}\label{cl:P}
    We can choose the polygonal domain ${\cal P}\subset{\cal B}$ so
    that $F_u(T)= -|T|$ for any bounded geodesic arc
    $T\subset\partial{\cal P}-\cup_iA_i$.
  \end{claim}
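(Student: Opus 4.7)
The plan is to select, among all the components of $\mathcal{B}$ on which $\{u_n\}$ diverges to $+\infty$, one that is ``maximal'' in the sense that on every bounded divergence line in its boundary the limit normal $X_{u_n}\to\nu_L$ points inward. A short directed-graph argument on the finite family of such candidates will produce exactly this.

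First I let $\mathcal{F}$ denote the set of components $\mathcal{P}'\subset\mathcal{B}$ on which $\{u_n\}$ diverges uniformly to $+\infty$ on compact subsets. Since by Proposition~\ref{prop:lindiv} divergence lines can only reach vertices of $\Omega$, there are finitely many divergence lines and hence finitely many components of $\mathcal{B}$, so $\mathcal{F}$ is finite and nonempty (it contains the $\mathcal{P}$ given in the setup). For $\mathcal{P}'\in\mathcal{F}$, write $u'=\lim(u_n-u_n(p'))$ with $p'\in\mathcal{P}'$. On any bounded divergence line $L\subset\partial\mathcal{P}'$, Proposition~\ref{prop:conv} gives a horizontal limit $\nu_L$ of $X_{u_n}|_L$, and Lemma~\ref{lem:div} (combined with Remark~\ref{rem:plantas}) forces $F_{u'}(L)\in\{+|L|,-|L|\}$ according to whether $\nu_L$ is the outer or inner normal to $\mathcal{P}'$.

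Next I define a directed edge $\mathcal{P}'\to\mathcal{P}''$ whenever $\mathcal{P}',\mathcal{P}''\in\mathcal{F}$ share a bounded divergence line $L$ with $\nu_L$ pointing from $\mathcal{P}'$ to $\mathcal{P}''$; equivalently, $F_{u'}(L)=+|L|$ and $F_{u''}(L)=-|L|$. Lemma~\ref{lem:div} applied on the $\mathcal{P}'$-side yields $u_n(p'')-u_n(p')\to+\infty$ for every choice of base points; this automatically places $\mathcal{P}''$ in $\mathcal{F}$ (since $u_n(p')\to+\infty$ forces $u_n(p'')\to+\infty$, and $u_n-u_n(p'')$ is bounded on compacts of $\mathcal{P}''$), and it rules out any directed cycle, because along $\mathcal{P}_1\to\cdots\to\mathcal{P}_k=\mathcal{P}_1$ the telescoping sum of the differences $u_n(p_{i+1})-u_n(p_i)$ would simultaneously equal $0$ and diverge to $+\infty$. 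Since the digraph is acyclic on a finite set, walking forward from $\mathcal{P}$ must reach, in finitely many steps, a sink $\mathcal{P}^*\in\mathcal{F}$ with no outgoing edges.

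At this sink, every bounded divergence line $L\subset\partial\mathcal{P}^*$ automatically satisfies $F_{u^*}(L)=-|L|$, since the only other possibility would produce an outgoing edge. The remaining bounded geodesic arcs of $\partial\mathcal{P}^*-\cup_iA_i$ are pieces of $B_i$-edges (where $u_n=-n$) or, if any, geodesic pieces of $C_i$-edges (where $u_n=f_{i,n}$ is uniformly bounded). Since $u_n(p^*)\to+\infty$, on each such arc $u_n-u_n(p^*)\to-\infty$, so Lemma~\ref{lem:flux}\,(iii) gives $F_{u^*}(T)=-|T|$ as well. Replacing $\mathcal{P}$ by $\mathcal{P}^*$ (and $u$ by $u^*$) proves the claim. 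The main obstacle is the correct setup of the directed graph and the verification of acyclicity via the telescoping identity; once that is in place, everything else is direct bookkeeping of the flux limits supplied by the preceding lemmas.
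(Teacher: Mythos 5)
Your proof is correct and follows essentially the same route as the paper: starting from a component of $\mathcal{B}$ where $\{u_n\}$ diverges to $+\infty$, you pass across any ``outgoing'' divergence line to the adjacent component, use Lemma~\ref{lem:div} and Remark~\ref{rem:plantas} to see that the new component again lies in $\mathcal{F}$, and terminate by finiteness of the components. Your directed-graph formulation with the telescoping identity is a slightly cleaner way to rule out revisiting a component than the paper's argument, which only explicitly excludes an immediate return to $\mathcal{U}_0$ before invoking finiteness.
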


  Assume Claim~\ref{cl:P} is true and define ${\cal P}_n$ as at the
  beginning of the proof.  Thus $F_u(\partial{\cal P}_n-\cup_i
  A_i-(\partial{\cal P}_n-\partial{\cal P}))=-|\partial{\cal
    P}_n-\cup_i A_i-(\partial{\cal P}_n-\partial{\cal P})|$. By
  Lemma~\ref{lem:flux},
  \[
  \left\{\begin{array}{l} \sum_iF_u(A_i\cap\partial{\cal P}_n)+
      F_u(\partial{\cal P}_n-\partial{\cal P})\\
      \mbox{}\\
      \mbox{}\hspace{2cm} +F_u(\partial{\cal P}_n-\cup_i
      A_i-(\partial{\cal
        P}_n-\partial{\cal P}))=0,\\
      \mbox{}\\
      \left|\sum_iF_u(A_i\cap\partial{\cal P}_n)+F_u(\partial{\cal P}_n-\partial{\cal P})\right|
      \leq\a_n+\ve_n,
    \end{array}\right.
  \]
  where $\ve_n=|\partial{\cal P}_n-\partial{\cal P}|$, which converges
  to zero as $n\to+\infty$.
  Hence,
  \[
  \g_n-\a_n-\ve_n\leq\a_n+\ve_n.
  \]
  Thus we obtain $-2\varepsilon_n\leq 2\alpha_n-\gamma_n\leq
  2\alpha_1-\gamma_1$, for every $n$.  Since $\varepsilon_n\to 0$ as
  $n\to+\infty$, we obtain a contradiction to the first condition in
  (\ref{hipJS2}).  (If we suppose there exists a component ${\cal
    P}\subset{\cal B}$ such that $\{u_n\}$ diverges uniformly to
  $-\infty$ on compact sets of ${\cal P}$, we similarly achieve a
  contradiction using that $2\be_1-\gamma_1<0$).  Hence there are no
  divergence lines for $\{u_n\}$, and so ${\cal B}=\Omega$.

  Applying a flux argument as above, we obtain that $\{u_n\}$
  converges uniformly on compact sets of $\Omega$ to a minimal graph
  $u:\Omega\to\R$. Finally, using barrier functions as in~\cite{cor2}
  or those defined in Lemma~\ref{lem:barrier} for the $D_i$ edges, we
deduce
  that $u$ takes the desired boundary values, and this proves
  Theorem~\ref{th2}.

\bigskip

  So it only remains to prove Claim~\ref{cl:P}. Note we must only
  prove there exists a component ${\cal P}$ of ${\cal B}$ such that
  $\{u_n\}$ diverges to $+\infty$ uniformly on compact sets of
  ${\cal P}$ and $F_u(T)= -|T|$ for any bounded geodesic arc $T$
  contained in a divergence line in $\partial{\cal P}$.  Observe that,
  since ${\cal B}\neq\Omega$ is assumed, every component of ${\cal B}$
  contains at least one divergence line in its boundary.
  
  We know there exists a component ${\cal U}_0\subset{\cal B}$ which
  is an inscribed polygonal domain and such that $\{u_n\}$ diverges
  to $+\infty$ uniformly on compact sets of ${\cal U}_0$.  If ${\cal
    U}_0$ satisfies Claim~\ref{cl:P}, we have finished.  Otherwise,
  there exists a divergence line $L_0\subset\partial{\cal U}_0$ such
  that $F_{u_n}(L_0)\to |L_0|$ with the orientation induced by
  $\partial{\cal U}_0$.  Let ${\cal U}_1$ be the component of ${\cal
    B}$ different from ${\cal U}_0$ containing $L_0$ in its boundary.
  Hence $F_{u_n}(L_0)\to -|L_0|$ when $L_0$ is oriented as
  $\partial{\cal U}_1$. We deduce from Remark~\ref{rem:plantas} that
  $\{u_n\}$ diverges to $+\infty$ uniformly on compact sets of
  ${\cal U}_1$.

  If ${\cal U}_1$ satisfies the conditions of Claim~\ref{cl:P}, we are
  done. Otherwise, there exists another divergence line
  $L_1\subset\partial{\cal U}_1$ such that $F_{u_n}(L_1)\to|L_1|$ when
  $L_1$ is oriented as $\partial{\cal U}_1$.  We deduce from
  Lemma~\ref{lem:div} that, if $p_0\in {\cal U}_0$, then
  $\{u_n-u_n(p_0)\}$ diverges to $+\infty$ uniformly on compact sets
  of ${\cal U}_1$ and $(u_n-u_n(p_0))_{L_1}\to+\infty$. In particular,
  $L_1$ cannot be in $\partial{\cal U}_0$ because then
  $F_{u_n}(L_1)\to-|L_1|$, with the orientation in $L_1$ induced by
  $\partial{\cal U}_0$, in contradiction with
  $(u_n-u_n(p_0))_{L_1}\to+\infty$. Then there exists a component
  ${\cal U}_2$ of ${\cal B}$ different from ${\cal U}_0,{\cal U}_1$
  containing $L_1$ in its boundary.

  Since there are a finite number of components of ${\cal B}$, we
eventually obtain a component ${\cal U}_k$ of ${\cal B}$
  satisfying Claim~\ref{cl:P}.  This completes the proof of
  Theorem~\ref{th2}.
\end{proof}

\begin{theorem}
  \label{th3}
  Suppose that both families $\{C_i\}_i$ and $\{D_i\}_i$ are empty.
  Then, there exists a solution to the Dirichlet problem on $\Omega$ if and
  only if we can choose the horocycles $H_i$ so
  that $\alpha_1=\beta_1$ when ${\cal P}=\Omega$, and
  \[
  2\a_1<\g_1\qquad\mbox{and }\qquad 2\be_1<\g_1
  \]
  for all others polygonal domain ${\cal P}$ inscribed in $\Omega$.
  Moreover, the solution is unique up to translation, if it exists.
\end{theorem}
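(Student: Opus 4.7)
The plan is to mimic the fourth case in the proof of Theorem~\ref{th1} (the bounded Scherk case with $\{C_i\}=\emptyset$), replacing the monotone convergence theorem by the convergence-set machinery of Propositions~\ref{prop:conv} and~\ref{prop:lindiv}. Necessity I would handle exactly as in the necessity part of Theorem~\ref{th2}: given a solution $u$ and an inscribed polygonal domain $\mathcal{P}$, truncate by the nested horocycles $H_{i,n}$, apply $F_u(\partial\mathcal{P}_n)=0$ together with Lemma~\ref{lem:flux}, and let $n\to+\infty$. For $\mathcal{P}\neq\Omega$ at least one interior geodesic chord appears in $\partial\mathcal{P}$, so the strict inequality of Lemma~\ref{lem:flux}(ii) produces $2\alpha_1<\gamma_1$ and $2\beta_1<\gamma_1$; for $\mathcal{P}=\Omega$ the boundary has no interior chord, and the limit yields $\alpha_1=\beta_1$.

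For sufficiency I would first construct, for every $n\in\N$, a minimal graph $v_n\colon\Omega\to\R$ with $v_n=n$ on the $A_i$ edges and $v_n=0$ on the $B_i$ edges, by treating every $A_i,B_i$ as a convex (geodesic) arc with finite prescribed value and invoking Theorem~\ref{th2} (its hypothesis is vacuous, as there are no $\pm\infty$ edges in this auxiliary problem). The maximum principle on an exhaustion of $\Omega$ by bounded subdomains gives $0\le v_n\le n$. Following~\cite{jes1}, I set $E_c=\{v_n>c\}$, $F_c=\{v_n<c\}$,
\[
\mu_n=\inf\{c\in(0,n)\mid F_c\text{ is connected}\},
\]
and $u_n=v_n-\mu_n$. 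The analogue of condition~(C1) in the ideal setting, which is automatic on $\H^2$ (cf.\ Section~\ref{secth1}), forces both $E_c$ and $F_c$ to be disconnected for appropriate values of $c$, making $\mu_n\in(0,n)$ well defined. I would then construct auxiliary barriers $u_i^+,u_i^-\colon\Omega\to\R$ exactly as in the fourth case of Theorem~\ref{th1}, this time using Theorem~\ref{th2}; the Jenkins--Serrin inequalities for each auxiliary problem follow from the hypotheses of Theorem~\ref{th3}, since promoting one $A_i$ or $B_i$ to a $C$-type edge with value $0$ removes exactly one geodesic length from $\alpha_1$ or $\beta_1$ while leaving $\gamma_1=\alpha_1+\beta_1=2\alpha_1$ unchanged. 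A componentwise application of the maximum principle on $E_{\mu_n}$ and $F_{\mu_n}$ then yields $u^-\le u_n\le u^+$ with $u^+=\max_i u_i^+$ and $u^-=\min_i u_i^-$, as in the bounded case.

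The main obstacle is the extraction of a limit, since $\{u_n\}$ is not known to be monotone. Applying Proposition~\ref{prop:conv} I pass to a subsequence whose divergence set consists of finitely many pairwise disjoint divergence lines, and Proposition~\ref{prop:lindiv} confines their endpoints to vertices of $\Omega$. To show that this divergence set is in fact empty, I would run the cascade argument of Claim~\ref{cl:P} in the proof of Theorem~\ref{th2}: if $\mathcal{B}\neq\Omega$, some component $\mathcal{P}$ of $\mathcal{B}$ is an inscribed polygonal domain on which $\{u_n-u_n(p)\}$ diverges to $+\infty$ with limit $u$ satisfying $F_u(T)=-|T|$ on every bounded geodesic chord of $\partial\mathcal{P}\setminus\bigcup_iA_i$, and the horocycle-truncated flux balance on $\mathcal{P}_n$ contradicts $2\alpha_1<\gamma_1$. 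Hence $\{u_n\}$ converges uniformly on compact subsets of $\Omega$ to a minimal graph $u$.

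A final flux computation on $\Omega$ itself, exploiting the equality $\alpha_1=\beta_1$, rules out $\mu_n\to\mu_\infty<+\infty$: otherwise $u=-\mu_\infty$ on the $B_i$ and $u=+\infty$ on the $A_i$, forcing $\sum_iF_u(A_i)=\alpha_1$ and $|\sum_iF_u(B_i)|<\beta_1$, in contradiction with $F_u(\partial\Omega)=0$ obtained in the horocycle truncation limit. Symmetrically $n-\mu_n\to+\infty$, so $u$ attains the prescribed $+\infty$ and $-\infty$ boundary values on the $A_i$ and $B_i$ edges. Uniqueness up to an additive constant would be obtained by applying the same flux argument to the difference of two solutions after normalising them to agree at a single point.
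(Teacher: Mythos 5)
Your proposal follows essentially the same route as the paper: the paper's proof of Theorem~\ref{th3} simply says it ``follows exactly as in the fourth case of the proof of Theorem~\ref{th1}'' and then carries out the final flux computation with the horocycle truncation, using that $\alpha_n-\beta_n$ is independent of $n$ and that the fixed gap $\delta$ on $B_{i,1}$ survives the limit $\ve_n\to 0$ --- which is exactly your last paragraph. Three remarks on where you diverge or fall short of the paper's (already terse) treatment. First, your detour through Propositions~\ref{prop:conv} and~\ref{prop:lindiv} and the cascade of Claim~\ref{cl:P} is unnecessary: once you have $u^-\le u_n\le u^+$ with $u^\pm$ finite minimal graphs, the sequence is locally uniformly bounded, so there are no divergence lines at all and the compactness theorem (Theorem~\ref{th:compactness}) gives the convergent subsequence directly, exactly as in the fourth case of Theorem~\ref{th1}; the divergence-line machinery is what one needs in Theorem~\ref{th2}, where no two-sided barriers are available, but not here. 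Second, you gloss over the one point the paper explicitly singles out as non-trivial in the unbounded setting: the identities $E_c=\cup_iE_c^i$ and $F_c=\cup_jF_c^j$ (every component of $E_c$ must have some $A_i$ in its closure). In the bounded case this is an immediate consequence of the maximum principle for bounded domains, but for an ideal Scherk domain a component of $E_c$ could a priori ``escape to infinity'' without touching any $A_i$; the paper defers the argument to~\cite{cor2}, and your proof should at least flag this. Third, your uniqueness sentence is vaguer than what is needed: the paper invokes Theorem~\ref{th:max2} (whose hypothesis (C-R) is automatic here, since adjacent ideal geodesics sharing an endpoint at infinity are asymptotic), whereas ``the same flux argument to the difference of two solutions'' is really the Collin--Krust scheme and would have to be spelled out on the horocycle-truncated domains. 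None of these is a fatal error, but the second point is a genuine gap relative to the paper's own standard of rigor.
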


\begin{proof}
  Note that $\alpha_n-\beta_n$ does not depend on $n$.

  The proof of this theorem follows exactly as in the fourth case of
  the proof of Theorem~\ref{th1}. We must only clarify some points:
  \begin{enumerate}
  \item Now it is not straightforward to obtain $E_c=\cup_i E_c^i$
    and $F_c=\cup_j F_c^j$. A detailed proof can be found
    in~\cite{cor2}.
  \item Once we have the minimal graph $u:\Omega\to\R$ obtained as the
    limit of a subsequence of $\{u_n\}$, we must verify it satisfies
    the desired boundary conditions; this is, we must prove that both
    sequences $\{\mu_n\}$ and $\{n-\mu_n\}$ diverge as $n\to+\infty$.
  \end{enumerate}

  Suppose $\mu_n\to\mu_\infty<+\infty$ as $n\to+\infty$. Hence,
  $u=-\mu_\infty$ on each $B_i$ edge and $u$ diverges to $+\infty$
  when we approach $A_i$ within $\Omega$. From Lemma~\ref{lem:flux}, we get:
  \begin{itemize}
  \item $ \sum_i F_u(A_{i,n})+\sum_i F_u(B_{i,n})+\sum_{i,j}
    F_u(\Gamma_{i,n}^j)=0$,
  \item $\sum_i F_u(A_{i,n})=\alpha_n$,
  \item $\sum_i F_u(B_{i,1})<\beta_1$, so there exists $\delta>0$ such
    that $\sum_i F_u(B_{i,1})\leq\beta_1-\delta$. Then
    $F_u(B_{i,n})=F_u(B_{i,1})+F_u(B_{i,n}-B_{i,1})<\beta_n-\delta$,
    for every $n$.
  \item $\sum_{i,j} F_u(\Gamma_{i,n}^j)< \varepsilon_n$, where
    $\varepsilon_n=\sum_{i,j} |\Gamma_{i,n}^j|$.
  \end{itemize}
  Hence $\alpha_n-\beta_n<\varepsilon_n-\delta$, for every $n$. Since
  $\varepsilon_n\to 0$ as $n\to+\infty$, we obtain
  $\alpha_n-\beta_n<0$ for $n$ large enough, a contradiction.
  Analogously, we obtain $n-\mu_n\to+\infty$ as $n\to+\infty$. The
Uniqueness part follows from Theorem~\ref{th:max2}, and Theorem~\ref{th3}
is proved.
\end{proof}

%%%%%%%%%%%%%%%%%%%

\subsection{A minimal graph in $\H^2\times\R$ with non-zero flux}
\label{zeroflux}

%%%%%%%%%%%%%%%%%%%

Let $\Omega\subset\H^2$ be an unbounded domain whose boundary consists
of two components:
\begin{itemize}
\item  $\Gamma_{\rm ext}=$ an outer component composed of consecutive open
  ideal geodesics $A_1,B_1,\cdots,A_k,B_{k_1}$ sharing their endpoints
  at infinity.
\item $\Gamma_{\rm int}=$ an interior component consisting of open convex
  (convex towards $\Omega$) arcs $C_1,\cdots,C_{k_2}$, together with
  their endpoints.
\end{itemize}

Take a domain $\Omega$ as above satisfying (\ref{hipJS2}) for every
inscribed polygonal domain ${\cal P}$ and such that $\alpha_1>\beta_1$
when ${\cal P}=\Omega$. For example, consider a small deformation (as
in Figure~\ref{fig:new}) of a domain $\Omega'$ whose inner boundary is
composed of convex arcs together with their endpoints, and its outer
boundary consists of an ideal polygonal curve with vertices on the
$2k$-roots of $1$ (in the picture, $k=4$).

\begin{figure}\begin{center}
\epsfysize=6cm \epsffile{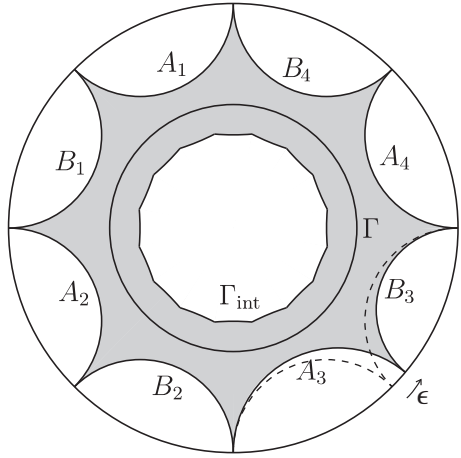}
\end{center}
\caption{The shadowed region is one of the domains considered in
Section~\ref{zeroflux}}
\label{fig:new}
\end{figure}

By Theorem~\ref{th2}, there exists a minimal graph $u:\Omega\to\R$
which takes boundary values $+\infty$ on the $A_i$ edges, $-\infty$ on
the $B_i$ edges, and $0$ on the $C_i$ edges.  Let
$\Gamma\subset\Omega$ be a curve homologous to $\Gamma_{\rm int}$.
Hence,
\[
F_u(\Gamma)=\sum_i F_u(A_{i,n})+\sum_i F_u(B_{i,n})+\sum_i
F_u(\Gamma_{i,n})
\]
\[
=\alpha_n-\beta_n+\sum_i F_u(\Gamma_{i,n}),
\]
where $\alpha_n=\sum_i |A_{i,n}|$ and $\beta_n=\sum_i
|B_{i,n}|$. Since $\alpha_n-\beta_n$ does not depend on $n$, we obtain
\[
|F_u(\Gamma)-\alpha_1+\beta_1|\leq \sum_i |F_u(\Gamma_{i,n})|\leq
\sum_i |\Gamma_{i,n}|.
\]
Finally, we know that $\sum_i |\Gamma_{i,n}|\to 0$, so $F_u(\Gamma)=\alpha_1-\beta_1>0$.

%%%%%%%%%%%%%%%%%%%

\subsection{The uniqueness problem in  $\H^2\times\R$}
\label{secPrincMax}

%%%%%%%%%%%%%%%%%%%

In this section we study the uniqueness of solutions constructed in
Theorems \ref{th2} and \ref{th3}. In the first subsection, we give a
maximum principle for solutions of the Dirichlet problem under some
constraints. In the second,
we construct a counterexample to a general uniqueness result.

%%%%%%%%%%%%%%
\subsubsection{Maximum principle}
%%%%%%%%%%%%%%

Maximum principles for unbounded domains in $\H^2$ are already known in
special cases. For example, the proof  of Collin and Rosenberg for the
maximum principle in~\cite{cor2} admits the following
generalization.

\begin{theorem}[\cite{cor2}]\label{th:max2}
Let $\Omega\subset\H^2$ be a domain (not necessarily simply connected)
whose boundary is composed of a finite number of convex arcs together with
their endpoints, possibly at infinity. Assume the following condition (C-R)
holds. Consider a domain ${\cal O}\subset\Omega$ and two minimal
graphs $u_1,u_2$ on ${\cal O}$ which extend continuously to $\overline{\cal
O}$. If $u_1\leq u_2$ on $\partial{\cal O}$, then $u_1\leq u_2$ in ${\cal
O}$.
\end{theorem}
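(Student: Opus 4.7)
The plan is to extend the flux/divergence argument used for bounded domains (Theorem~\ref{th:max}) to the unbounded setting, controlling the behavior at $\partial_\infty\H^2$ via the (C-R) condition. I would argue by contradiction: suppose $\{u_1>u_2\}\cap{\cal O}$ is non-empty, pick a regular value $\ve>0$ of $u_1-u_2$ (dense by Sard), and set $W_\ve=\{u_1>u_2+\ve\}$. Since $u_1\le u_2$ on $\partial{\cal O}$ and both functions extend continuously to $\overline{\cal O}$, $W_\ve$ is separated from $\partial{\cal O}$ inside $\H^2$, and its boundary inside ${\cal O}$ is a smooth $1$-submanifold along which $\nabla(u_1-u_2)\neq 0$.

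The heart of the argument is the standard monotonicity inequality
\[
\langle X_{u_1}-X_{u_2},\,\nabla u_1-\nabla u_2\rangle\ge 0,
\]
with equality only if $\nabla u_1=\nabla u_2$, which follows from the strict convexity of $p\mapsto \sqrt{1+|p|^2}$. Combined with $\mathrm{div}(X_{u_1}-X_{u_2})=0$ and with the fact that the outer unit normal $\nu$ to $W_\ve$ along $\partial W_\ve\cap{\cal O}$ is (up to sign) parallel to $\nabla(u_1-u_2)$, this shows $\langle X_{u_1}-X_{u_2},\nu\rangle$ has a strict sign on $\partial W_\ve\cap{\cal O}$. If $W_\ve$ were relatively compact in ${\cal O}$, the divergence theorem would immediately yield a contradiction, just as for Theorem~\ref{th:max}.

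The main obstacle is that $W_\ve$ may escape to $\partial_\infty\H^2$ along ideal vertices of $\Omega$, and this is exactly where (C-R) intervenes. Near any such vertex $p\in\partial_\infty\H^2$ the two arcs of $\partial\Omega$ meeting at $p$ are asymptotic; working in horocyclic coordinates centered at $p$ this forces the cross-section of $\overline{\Omega}$ (and hence of $W_\ve\subset{\cal O}\subset\Omega$) by a horocycle at $p$ to have hyperbolic length tending to $0$ as one approaches $p$. I would then exhaust ${\cal O}$ by sub-domains ${\cal O}_R$ obtained by cutting off small horocyclic cusps at each ideal vertex of $\Omega$, apply the divergence theorem to $W_\ve\cap{\cal O}_R$, and use the uniform bound $|X_{u_1}-X_{u_2}|\le 2$ together with the shrinking of the cusp cross-sections to conclude that the flux across $W_\ve\cap\partial{\cal O}_R$ tends to~$0$. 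Passing to the limit, the flux across $\partial W_\ve\cap{\cal O}$ must vanish, contradicting its strict sign and forcing $W_\ve=\emptyset$. Letting $\ve\to 0$ along regular values gives $u_1\le u_2$ on ${\cal O}$.

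The delicate points I expect are: (i) quantifying the asymptotic narrowing coming from (C-R) in terms of hyperbolic horocyclic length, which is essentially a local computation but needs to be uniform over the finitely many ideal vertices; and (ii) checking that $W_\ve$ is genuinely separated from $\partial{\cal O}$ inside $\H^2$ even when $\partial{\cal O}$ itself reaches $\partial_\infty\H^2$ — here the continuous extension of $u_1,u_2$ to $\overline{\cal O}$ together with a local barrier (of the type provided by Lemma~\ref{lem:barrier}) near each vertex should suffice. This is the place where the Collin–Rosenberg argument for simply connected $\Omega$ extends to the multiply connected setting without essential modification.
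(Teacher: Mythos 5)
Your argument is essentially the one the paper relies on: Theorem~\ref{th:max2} is quoted from~\cite{cor2}, and the flux computation you describe --- strict sign of $\langle X_{u_1}-X_{u_2},\nu\rangle$ along a regular level set of $u_1-u_2$, combined with vanishing flux through short crosscuts at the finitely many ideal vertices where (C-R) pinches the domain --- is exactly how the paper treats the ``neck'' (type~1) ends in its generalization, see Claim~\ref{claimfinal} in the proof of Theorem~\ref{maxprinciple}. One small correction: (C-R) only yields $\liminf\,\mathrm{dist}(q,\Gamma_2)=0$, i.e.\ a sequence of arbitrarily short geodesic crosscuts arbitrarily deep in each cusp, not that every horocyclic cross-section shrinks to zero, but a single short crosscut at each vertex is all the divergence-theorem computation requires.
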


The aim of this section is to prove that we can weaken the hypothesis
on the asymptotic behaviour of $\Omega$ when some constraints are
satisfied by the boundary data. Before stating our result, we need to
introduce some definitions. We notice that some notations for domains
we consider are different from the ones in
Subsection~\ref{solutiondirichlet}.

We consider domains $\Omega\subset\H^2$ whose boundary
$\partial_\infty\Omega$ is composed of a
finite number of open arcs $C_i$ in $\H^2$ and arcs $D_i$ in
$\partial_\infty\H^2$ together with their endpoints (the $C_i$ are not
supposed to be convex). The endpoints of the arcs $C_i$ and $D_i$ are
called vertices of $\Omega$ and those in $\partial_\infty\H^2$ are
called ideal vertices of $\Omega$. Let $p$ be an ideal
vertex of $\Omega$ and $\Gamma_1$ and $\Gamma_2$ be two adjacent boundary
arcs at $p$. Let $(\phi,\theta)$ be polar coordinates centered at $p$.
Consider a parametrization of $\Gamma_i$, $\gamma_i:
[0,1]\rightarrow \overline{\{\phi\le 0\}}^\infty$,
with $\gamma_i(0)=p$ and $\gamma_i(1)\in\{\phi=0\}$. We denote the
polar coordinates of the parametrization by $\gamma_i(t)=
(\phi_i(t),\theta_i(t))$ and assume that $\theta_1(1)\le \theta_2(1)$.

\begin{definition}
We say that $\Omega$ \emph{has necks near} $p$ if
$$
\liminf_{\substack{q\in\Gamma_1\\ q\rightarrow p}}d(q,\Gamma_2)=
\liminf_{\substack{q\in\Gamma_2\\ q\rightarrow p}}d(q,\Gamma_1)=0
$$
and the domain $\Omega$ is called \emph{admissible} if, for every
ideal vertex $p$ of $\Omega$, we have one of the following situations:
\begin{itemize}
\item[\textbf{type 1}] $\Omega$ has necks near $p$ or
\item[\textbf{type 2}] $\dis\liminf_{t\rightarrow 0}\theta_2(t)>0$ and
$\dis \limsup_{t\rightarrow 0}\theta_1(t)<\pi$.
\end{itemize}
\end{definition}
The limits of the second type do not depend on the choice of polar
coordinates. We notice that, if all $C_i$ are convex arcs (as in
section~\ref{solutiondirichlet}), every ideal vertex is of second type
\textit{i.e.} $\Omega$ is admissible. The hypothesis type $2$ means that
the adjacent arcs do not arrives ``tangentially'' to
$\partial_\infty\H^2$ on the same side of $p$. As in Figure
\ref{type1and2}, consider an ideal vertex $p$ such that, near $p$, $\Omega$
is the domain between to horocycles $p$. The distance between $\Gamma_1$
and $\Gamma_2$ is constant so $p$ is not a type 1 vertex. Besides we have
$\lim_{t\rightarrow 0} \theta_2(t)=0$, thus $p$ is
not a type 2 vertex. This is the kind of situation that we avoid by our
definition.
\begin{figure}[h]
\begin{center}
\resizebox{0.6\linewidth}{!}{\input{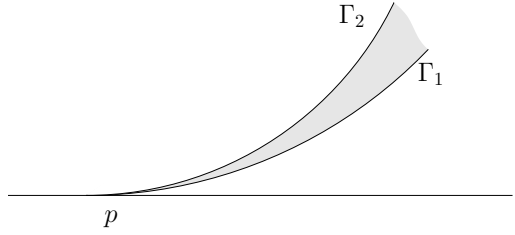}}
\caption{An ideal vertex which is neither type 1 nor type
2}\label{type1and2}
\end{center}
\end{figure}

Let $p$ be an ideal vertex of an admissible domain $\Omega$. \textit{A
priori}, this point is the endpoint of $2n$ arcs $\Gamma_i$ in
$\partial_\infty\Omega$ (see Figure~\ref{courb-multipl}). As above, let
$\gamma_i:
[0,1]\rightarrow \overline{\{\phi\le
0\}}^\infty\subset\H^2\cup\partial_\infty\H^2$,
$\gamma_i(t)=(\phi_i(t),\theta_i(t))$, be a
parametrization of $\Gamma_i$, with $\gamma_i(0)=p$ and
$\gamma_i(1)\in\{\phi=0\}$. We assume that $\theta_i(1)<
\theta_j(1)$ if $i< j$. Thus $\Omega\cap\{\phi\le 0\}$ is included in the
$n$ connected components of $\{\phi\le
0\}\backslash(\cup_i\Gamma_i)$ between $\Gamma_{2k-1}$ and
$\Gamma_{2k}$, for $k=1,\cdots,n$. When $u$ is a minimal graph on
$\Omega$ the
study of $u$ on the part between $\Gamma_{2k-1}$ and $\Gamma_{2k}$ depends
only on the values of $u$ on $\Gamma_{2k-1}$, $\Gamma_{2k}$ and the other
boundary arcs of $\Omega\cap\{\phi\le 0\}$ between $\Gamma_{2k-1}$ and
$\Gamma_{2k+1}$. Thus the study on each part will be done separately; so we
can assume that each ideal vertex is the endpoint of only two arcs in
$\partial_\infty\Omega$.
\begin{figure}[h]
\begin{center}
\resizebox{0.6\linewidth}{!}{\input{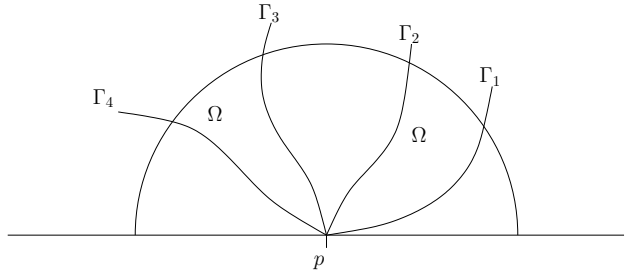}}
\caption{An ideal vertex with more than two adjacent
boundary arcs}\label{courb-multipl}
\end{center}
\end{figure}

Let $u$ be a minimal graph on an admissible domain $\Omega$. We say that
$u$ is \emph{admissible or an admissible solution} if 
\begin{itemize}
\item $u$ extends continuously to $\cup_iD_i$,
\item $u$ tends to $+\infty$ on $A(u)\subset\partial\Omega$ with
$A(u)$ is a finite union of open subarcs of $\cup_i C_i$,
\item $u$ tends to $-\infty$ on $B(u)\subset\partial\Omega$ with
$B(u)$ is a finite union of open subarcs of $\cup_i C_i$ and 
\item $u$ extends continuously to $\cup_iC_i \backslash
\overline{A(u)\cup B(u)}$.
\end{itemize}
We remark that each connected component of $A(u)$ and $B(u)$ is a
geodesic arc (see Theorem 10.4 in \cite{os1} for the Euclidean case and
Lemma \ref{lem:convexhull}). Also, we do not say anything about
the values of $u$ at the vertices of $\Omega$ and the endpoints of $A(u)$
and $B(u)$. Thus, in the following, the hypotheses on the boundary values
of an admissible solution $u$ will be only made where it is well defined
\textit{i.e.} $\cup_i D_i$, $A(u)$, $B(u)$ and $\cup_iC_i \backslash
\overline{A(u)\cup B(u)}$. As an example, in Theorem \ref{maxprinciple},
we shall write $u_2\le u_1$ on
$\partial_\infty\Omega$, this means that, $A(u_2)\subset A(u_1)$,
$B(u_1)\subset B(u_2)$ and $(\cup_i D_i)\bigcup(\cup_i C_i\backslash
\overline{A(u_2)\cup B(u_1)}$ is non empty and $u_2\le u_1$ on it (on
$A(u_1)\backslash\overline{A(u_2)}$ and $B(u_2)\backslash\overline{B(u_1)}$
the inequality is automatically satisfied). When $(\cup_i
D_i)\bigcup(\cup_i C_i\backslash \overline{A(u_2)\cup B(u_1)}$ is empty
then $u_1$ and $u_2$ are solutions of the Dirichlet problem studied in
Theorem \ref{th3} and we already know that $u_1-u_2$ is constant so no new
theorem is needed. Let us now state our generalization of
Theorem~\ref{th:max2}.

\begin{theorem}[General maximum principle]\label{maxprinciple}
Let $\Omega\subset\H^2$ be an admissible domain and $u_1$ and $u_2$ be two
admissible solutions. We assume that $u_2\le u_1$ on $\partial_\infty
\Omega$. Also we assume that the behaviour near each ideal vertex
$p\in\partial_\infty\H^2$ is one of the following:
\begin{itemize}
\item[\textbf{type 1}] $\Omega$ has necks near $p$,
\item[\textbf{type 2-i}] $\liminf_p u_1+\ve>\limsup_p u_2$ (for
every $\ve>0$) along both boundary components with $p$ as endpoint,
\item[\textbf{type 2-ii}] if $A\subset A(u_2)\subset A(u_1)$ (resp.
$B\subset B(u_1)\subset B(u_2)$) is a geodesic arc with $p$ as endpoint
and $\Gamma$ is the other boundary arc in $\partial_\infty\Omega$ with
endpoint $p$, %that bounds $\Omega$ near $p$, we have 
$\liminf_p u_1+\ve>\limsup_p u_2$ (for every $\ve>0$) along $\Gamma$. 
\end{itemize}
Then we have $u_2\le u_1$ in $\Omega$.
\end{theorem}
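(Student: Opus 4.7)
The plan is a flux-based argument by contradiction, in the spirit of Collin--Rosenberg~\cite{cor2}. Set $w=u_2-u_1$ and assume $\sup_\Omega w>0$. By Sard's theorem, pick a regular value $\delta\in(0,\sup_\Omega w)$ and consider
$$U_\delta=\{q\in\Omega : w(q)>\delta\},$$
which is open and nonempty, and whose boundary $\partial U_\delta\cap\Omega$ is a smooth $1$-submanifold of the regular level set $\{w=\delta\}$. The two key ingredients are the vanishing flux identity $\int_{\partial V}\langle X_{u_2}-X_{u_1},\nu\rangle\,ds=0$ on any relatively compact piecewise-smooth $V\subset U_\delta$ (Lemma~\ref{lem:flux}(i) applied to $u_1,u_2$ separately), and the pointwise strict inequality $\langle X_{u_2}-X_{u_1},\nu\rangle<0$ on $\{w=\delta\}$, where $\nu=-\nabla w/|\nabla w|$ is the outer unit normal to $U_\delta$; this strict inequality comes from the classical relation $\langle X_u-X_v,\nabla(u-v)\rangle>0$ unless $\nabla u=\nabla v$, combined with the regularity of $\delta$.

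First I would classify how $\overline{U_\delta}$ can meet $\partial_\infty\Omega$. The hypothesis $u_2\le u_1$ on $\partial_\infty\Omega$ forces $A(u_2)\subset A(u_1)$ and $B(u_1)\subset B(u_2)$. On any $D_i$ arc, or on any continuous portion of a $C_i$ arc where both solutions extend with $u_2\le u_1$, continuity yields $w\le 0<\delta$ in a one-sided neighborhood, so $U_\delta$ stays away. The same reasoning excludes $A(u_1)\setminus\overline{A(u_2)}$ and $B(u_2)\setminus\overline{B(u_1)}$. Therefore the only pieces of $\partial\Omega$ that $\overline{U_\delta}$ may touch are the common $+\infty$ geodesic arcs $A(u_1)\cap A(u_2)$ and the common $-\infty$ geodesic arcs $B(u_1)\cap B(u_2)$, together possibly with arbitrarily small neighborhoods of the ideal vertices. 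On a bounded subarc $T$ of a common $\pm\infty$ geodesic, Lemma~\ref{lem:flux}(iii) gives $F_{u_2}(T)=F_{u_1}(T)=\pm|T|$, so the flux of $X_{u_2}-X_{u_1}$ across $T$ vanishes.

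Next I would treat each ideal vertex $p$ according to its type. For a type 2-i vertex, the strict gap $\liminf_p u_1>\limsup_p u_2$ along both adjacent arcs together with the Sa Earp--Abresch barriers of Lemma~\ref{lem:barrier}, applied on a small wedge at $p$ in polar coordinates centered at $p$, gives a neighborhood of $p$ on which $w<\delta$; so $U_\delta$ stays bounded away from $p$ and no truncation is needed. For a type 1 vertex, the neck condition produces arcs $\tau_n$ joining the two adjacent boundary components of $\partial\Omega$ near $p$ with $|\tau_n|\to 0$; I use $\tau_n$ to chop $U_\delta$ off near $p$, and since $|X_{u_i}|<1$ the flux of $X_{u_2}-X_{u_1}$ across $\tau_n$ is bounded in absolute value by $2|\tau_n|\to 0$. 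For a type 2-ii vertex, one adjacent arc $A$ is a common $\pm\infty$ geodesic while the other $\Gamma$ satisfies the gap condition; I combine the two ideas, using Lemma~\ref{lem:barrier} on the wedge between $A$ and an equidistant curve $\{\theta=\theta_0\}$ centered at $p$ to force $\overline{U_\delta}$ away from $p$ along $\Gamma$, whereas along $A$ the $X_{u_1}$ and $X_{u_2}$ fluxes cancel automatically.

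Finally I would apply the flux identity to an exhausting family $\{V_n\}$ of relatively compact piecewise-smooth subdomains of $U_\delta$ built as above: $\partial V_n$ splits into a level-set part contained in $\{w=\delta\}$ (which grows to the whole level set), parts of common $\pm\infty$ geodesic arcs (with zero contribution), and truncation arcs near the ideal vertices (with total contribution $o(1)$). The level-set contribution is strictly negative and, once the level-set piece is large enough, is bounded above by a fixed negative constant~$-c$; letting $n\to\infty$ the flux identity forces $0\le -c+o(1)$, a contradiction, yielding $u_2\le u_1$. The main obstacle will be the type 2-ii case: along the common $\pm\infty$ geodesic $A$ both solutions blow up, so controlling $w$ near $p$ on the $\Gamma$-side cannot rely on boundary values alone but requires a careful comparison with the Sa Earp--Abresch barriers of Lemma~\ref{lem:barrier} and the gap condition along $\Gamma$. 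This is precisely the step that replaces the (C-R) hypothesis used in Theorem~\ref{th:max2}.
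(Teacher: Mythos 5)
Your overall strategy (contradiction via the flux of $X_{u_2}-X_{u_1}$ on a superlevel set, zero contribution on common $\pm\infty$ geodesics, truncation near ideal vertices) is the right family of ideas, but the single-stage version you propose has a genuine gap at the type~2 vertices. For type 2-i you write ``the strict gap $\liminf_p u_1>\limsup_p u_2$''; the hypothesis is only $\liminf_p u_1+\ve>\limsup_p u_2$ for every $\ve>0$, i.e.\ $\liminf_p u_1\ge\limsup_p u_2$, with equality allowed. In that case you cannot conclude that $w=u_2-u_1<\delta$ near $p$, so $U_\delta$ may well have a point-end at $p$. The most the barriers of Lemma~\ref{lem:barrier} give (this is Lemma~\ref{corridor} in the paper) is that such a point-end is confined to a \emph{corridor} $\{\alpha<\theta<\pi-\alpha\}$, whose cross-sections have length bounded away from zero; hence your truncation arcs do \emph{not} contribute $o(1)$ and the contradiction $0\le -c+o(1)$ does not follow. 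The same problem occurs at type 2-ii vertices: the comparison with the Sa Earp--Abresch graphs only yields $h_1-h_2\ge -\ln 3-c_{\theta_\infty}-2$, a \emph{fixed} negative constant, so it confines the sublevel set $\{u_1-u_2\le m\}$ only when $m$ is below that constant --- it says nothing about $\{u_2-u_1>\delta\}$ for a small fixed $\delta>0$.

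This is exactly why the paper runs a two-stage argument. First, Proposition~\ref{colkru} applies the \emph{weighted} Collin--Krust identity $\iint_{O}|X|^2\le\int_{\partial_3}u\langle X,\nu\rangle$ (not the plain flux identity) to the set $\{u_1\le u_2-\ve\}$; for corridor ends this produces a Riccati-type differential inequality in $\phi$ whose solution blows up in finite time, contradicting the uniform bound $\eta(\phi)\le 2C(\alpha)$, and thereby proves that $u_1-u_2$ is \emph{unbounded below}. Only then does one run the plain flux argument, on $O=\{u_1-u_2\le m\}$ with $m$ chosen more negative than the barrier constants: for such $m$ the set $O$ near a type 2-ii vertex is trapped in $\{0\le x\le 2\}$ in the half-plane model, so the cross-section lengths $\ell(\gamma_i(\phi))$ tend to $0$ and Claim~\ref{claimfinal} closes the argument. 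Your proposal is missing both the weighted flux estimate needed to handle corridor ends and the preliminary unboundedness step that legitimizes taking $m$ arbitrarily negative; without them the truncation terms cannot be controlled.
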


Let us make some comments on the hypotheses of the theorem. First the
hypothesis (C-R) made by Collin and Rosenberg in Theorem~\ref{th:max2}
implies that, near each ideal vertex, $\Omega$ has necks.
Thus Theorem \ref{maxprinciple} generalizes Theorem~\ref{th:max2}. We
notice that,
when a vertex $p$ is the endpoint of two geodesic arcs (for example, 
one in $A(u_2)$ and the other in $B(u_1)$), $\Omega$ has necks near
$p$. Moreover, the hypothesis $\liminf_p u_1+\ve>\limsup_p u_2$ along
a boundary component which has $p$ as endpoint means that we are in one of
the following three cases:
\begin{align}
&\liminf_p u_1=+\infty\textrm{ and }\limsup_p u_2<+\infty, \\
&\liminf_p u_1>-\infty\textrm{ and }\limsup_p u_2=-\infty, \\
&-\infty<\limsup_p u_2\le\liminf_p u_1<+\infty.\label{case3}
\end{align}
in the third case, the boundary data for $u_1$ and $u_2$ ``stay close'' so
it is the more complicated case. Hence the proof will be written in
this case; small changes suffice to treat the first two cases. We remark
that our theorem does not deal with the case $\lim_p u_1=\lim_p
u_2=+\infty$.

The proof of Theorem~\ref{maxprinciple} is long and needs some preliminary
results that may have their own interest.

Let $\Omega$ be a domain in $\H^2$, we say that $\Omega$ \emph{has a finite
number of point-ends} if there exist $p_1,\cdots, p_n\in
\partial_\infty\H^2$ and $(\phi_i,\theta_i)$ polar coordinates centered at
$p_i$ such that:
$$
\textrm{for every }m<0 \textrm{ and }i,\ \Omega\cap
\cup_i\{\phi_i>m\}\textrm{ is
compact and }\Omega\cap \{\phi_i<m\}\neq\emptyset.
$$
The $p_i$ are the point-ends (we do not assume anything about the
connectedness of $\Omega\cap \{\phi_i<m\}$). We say the
point-end $p_i$ \emph{is in a corridor} if there exists
$\alpha\in(0,\pi/2)$ and $m<0$ such
that:
$$
\Omega\cap \{\phi_i<m\}\subset \{\alpha<\theta_i<\pi-\alpha\}
$$
We notice that these definitions do not depend on the choice of
$(\phi_i,\theta_i)$.

Let $\Omega\subset\H^2$ be an admissible domain and $u_1$ and $u_2$ be two
admissible solutions on $\Omega$. We assume that $u_1 \ge u_2$ on
$\partial_\infty\Omega$. Let $\ve$ be positive with $O=\{u_1\le u_2-\ve\}$
nonempty. Since $u_1\ge u_2$ on the $D_i$, $O$ has a finite number of
point-ends that are among the ideal vertices of $\Omega$. With this
setting, we have a first result which follows the ideas of Collin and Krust
in \cite{ck1}.

\begin{proposition}\label{colkru}
Let $\Omega\subset \H^2$, $u_1$, $u_2$ admissible solutions on $\Omega$,
$\ve>0$ and $O$ be as above. The subset $O$ is assumed to be nonempty and,
for each point-end $p$, we assume  that either $p$ is in a corridor or
$\Omega$ has necks near $p$. Then the function $u_1-u_2$ is not bounded
below.
\end{proposition}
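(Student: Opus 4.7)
The plan is to argue by contradiction using the integration-by-parts identity of Collin--Krust \cite{ck1}. Assume $u_1 - u_2 \ge -M$ on $\Omega$ for some $M > 0$, and set $w := u_1 - u_2 + \ve$, so $-M + \ve \le w \le 0$ on $O$, with $w = 0$ on the interior boundary $\partial O \cap \Omega$. Since $\operatorname{div}(X_{u_i}) = 0$, one has
\[
\operatorname{div}\bigl(w(X_{u_1} - X_{u_2})\bigr) = \langle \nabla(u_1 - u_2), X_{u_1} - X_{u_2}\rangle,
\]
and the right-hand integrand is non-negative (vanishing if and only if $\nabla u_1 = \nabla u_2$) by the elementary inequality recalled in Lemma 2 of \cite{ck1}.

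I would exhaust $O$ by bounded subdomains $O_R$ obtained by truncating a neighborhood of each point-end $p$ of $O$ with a short arc $\tau_R^{(p)}$: if $\Omega$ has necks near $p$, take $\tau_R^{(p)}$ to be a geodesic across a neck of length less than $1/R$; if $p$ lies in a corridor with polar coordinates $(\phi_p, \theta_p)$, set $\tau_R^{(p)} := O \cap \{\phi_p = -R\}$, whose length is bounded by $K := \int_\alpha^{\pi-\alpha} d\theta/\sin\theta$ (independent of $R$). Applying the divergence theorem on $O_R$, the interior boundary $\partial O \cap O_R$ contributes nothing (since $w = 0$ there), and we obtain
\[
0 \;\le\; \int_{O_R} \langle \nabla(u_1 - u_2), X_{u_1} - X_{u_2}\rangle \, dA \;=\; \sum_p \int_{\tau_R^{(p)}} w\langle X_{u_1} - X_{u_2}, \nu\rangle \, ds,
\]
whose modulus is bounded by $2(M - \ve)\sum_p |\tau_R^{(p)}|$.

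For a necks-type point-end the right-hand side tends to $0$ as $R \to \infty$, directly forcing the left-hand side to vanish in the limit. In the corridor case the bound is only uniform, so a further step is needed: the left-hand side is monotone increasing in $R$ and bounded, hence converges to some $L < \infty$; a coarea argument in $\phi_p$ (using $|\nabla \phi_p| = \sin \theta_p \le 1$ throughout the corridor) shows that $R \mapsto \int_{\tau_R^{(p)}} w\langle X_{u_1} - X_{u_2}, \nu\rangle \, ds$ is integrable in $R$, which extracts a subsequence $R_n \to \infty$ along which the boundary integral tends to $0$, yielding $L = 0$.

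Passing to the limit gives $\int_O \langle \nabla(u_1 - u_2), X_{u_1} - X_{u_2}\rangle \, dA = 0$, so $\nabla u_1 = \nabla u_2$ almost everywhere on $O$. Hence $u_1 - u_2$ is locally constant on $O$; continuity at $\partial O$ forces this constant to be $-\ve$, and real-analyticity of minimal-graph solutions propagates the identity $u_1 - u_2 \equiv -\ve$ to the whole connected component of $\Omega$ containing $O$. This contradicts $u_1 \ge u_2$ on $\partial_\infty \Omega$ at any non-infinite boundary-data arc in that component. The main obstacle is the corridor case, where the truncation arcs do not shrink in length and the desired vanishing of the boundary flux must be extracted by the indirect coarea argument sketched above.
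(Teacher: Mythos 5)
Your overall strategy --- the Collin--Krust identity, truncation of $O$ at its point-ends by short geodesics across necks and by arcs $O\cap\{\phi_p=-R\}$ of uniformly bounded length in the corridors --- is the same as the paper's, and your corridor step is a legitimate variant: where you use monotonicity plus boundedness of the truncated energy together with a coarea/subsequence extraction to force the boundary flux to vanish and conclude $X_{u_1}\equiv X_{u_2}$ on $O$, the paper instead keeps the inequality $\iint_{O_\phi}|X|^2\le M\int_{I_\phi}|X|$, applies Cauchy--Schwarz to obtain a Riccati-type differential inequality for $\eta(\phi)=\int_{I_\phi}|X|$, and derives the contradiction from finite-time blow-up of $\eta$ against the bound $\eta\le 2C(\alpha)$. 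Both routes can be made to work; note only that what the coarea formula makes integrable in $R$ is $\int_{\tau_R}|X|^2\,ds$, and you then need the uniform length bound plus Cauchy--Schwarz to pass to the vanishing of $\int_{\tau_R}|w|\,|X|\,ds$ along a subsequence --- as written you conflate $|X|$ with $|X|^2$.

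The genuine gap is your assertion that the only boundary contributions in the divergence theorem on $O_R$ are the truncation arcs and $\partial O\cap\Omega$, where $w=0$. The set $O=\{u_1\le u_2-\ve\}$ can accumulate on the arcs $\overline{A(u_2)\cup B(u_1)}\subset\partial\Omega$ where both solutions take the same infinite boundary value, as well as on the finite vertices of $\Omega$ and the endpoints of those arcs; there $w$ is not defined and $\overline{O_R}$ is not contained in $\Omega$, so the identity equating $\int_{O_R}\langle\nabla(u_1-u_2),X\rangle$ to the sum of the fluxes through the $\tau_R^{(p)}$ does not follow as stated. This is precisely what the paper's $\delta$-neighborhoods $N_\delta$ and Claim~\ref{limint} are for: on the interior of a component of $A(u_2)\cup B(u_1)$ both $X_{u_1}$ and $X_{u_2}$ converge to the outward unit conormal (Lemma~\ref{lem:flux}), so $|X|\to 0$ there, while near the endpoints one only has the trivial bound $|X|\le 2$ and must make the length of that boundary piece small (the auxiliary parameter $\beta$). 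You need to insert this limiting argument in $\delta$ before letting $R\to\infty$; without it the proof does not go through for admissible solutions with nonempty $A(u_2)$ or $B(u_1)$.
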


\begin{proof}
First, we can assume that $\ve$ is a regular value of $u_2-u_1$ and so
$\partial O\cap\Omega$ is smooth. Let us assume that the proposition is not
satisfied
\textit{i.e.} there exists $M>0$ such that $u_2-u_1\le M$. 

Let $K$ be a domain in $\H^2$ with smooth boundary such that
$\overline{\Omega\cap K}$ is compact. We notice that $\partial
O\cap (\cup_i D_i)=\emptyset$ and $\partial O\cap (\cup_i C_i)\subset
\overline{A(u_2)\cup B(u_1)}$. For $\delta>0$ small, we denote by
$N_\delta$ the closed $\delta$-neighborhood of $\overline{A(u_2)\cup
B(u_1)}$ and define:
$$
O(K,\delta)=\big(O\cap K\big)\backslash N_\delta 
$$
We notice that $\partial O(K,\delta)$ is piecewise smooth and is
included in $\Omega$. This boundary can be decomposed in three parts:
\begin{itemize}
\item $\partial_1(K,\delta)=\partial O(K,\delta)\cap
\partial O$ on which $u_2-u_1=\ve$,
\item $\partial_2(K,\delta)=\partial O(K,\delta)\cap
\partial N_\delta$,
\item $\partial_3(K,\delta)=\partial O(K,\delta)\cap (\partial
K\backslash \partial O)$.
\end{itemize}

\begin{figure}[h]
\begin{center}
\resizebox{0.6\linewidth}{!}{\input{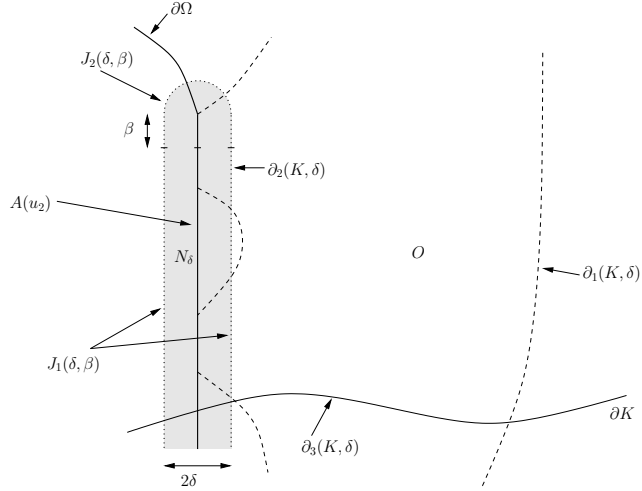}}
\caption{The boundary parts of $O(K,\delta)$}
\end{center}
\end{figure}

Let us define $u=u_2-u_1-\ve$, $X=X_{u_2}-X_{u_1}$ and $\nu$ the outgoing
normal from $O(K,\delta)$. Let us prove that:
\begin{equation}\label{lim-intdpsi}
\lim_{\delta\rightarrow0}\left|\int_{\partial_2(K,\delta)} u\langle
X,\nu\rangle\right| =0
\end{equation}
Since $\displaystyle \left|\int_{\partial_2(K,\delta)} u \langle
X,\nu\rangle\right|\le M\int_{\partial_2(K,\delta)}  \left|\langle
X,\nu\rangle\right|$, it suffices to prove

\begin{claim}\label{limint}
we have:
$$
\lim_{\delta\rightarrow 0}\int_{\partial_2(K,\delta)}
\left|\langle X,\nu\rangle \right|
=0
$$
\end{claim}

The connected components of $A(u_2)\cup B(u_1)$ are geodesic arcs. In
such a component, for $\beta>0$, a subarc is composed of points at a
distance larger than $\beta$ from the endpoints. We denote
by $I(\beta)$ the union of all these subarcs. Now, in $\partial
N_\delta$, some points are at distance $\delta$ from  $I(\beta)$ (we
denote this part $J_1(\delta,\beta)$) and the other
points are at distance $\delta$ from 
$\overline{A(u_2)\cup B(u_1)}\backslash I(\beta)$ (we denote this
part $J_2(\delta,\beta)$). We notice that the length of
$J_2(\delta,\beta)$ is bounded and 
$$
\lim_{\delta\rightarrow 0}{\ell}(J_2(\delta,\beta))=2n_0\beta
$$
where $n_0$ is the number of endpoints of $A(u_2)\cup B(u_1)$ in $\H^2$. We
have:
\begin{align*}
\int_{\partial_2(K,\delta)} \left|\langle X,\nu\rangle \right|&=
\int_{J_1(\delta,\beta)\cap\partial O(K,\delta)}
\left|\langle X,\nu\rangle \right|+ 
\int_{J_2(\delta,\beta)\cap\partial O(K,\delta)}
\left|\langle X,\nu\rangle \right|\\
&\le \int_{J_1(\delta,\beta)\cap\partial O(K,\delta)}
|X|+2\ell(J_2(\delta,\beta))\\
&\le \ell(J_1(\delta,\beta)\cap\partial O(K,\delta))
\max_{J_1(\delta,\beta)\cap \partial O(K,\delta)} |X|+
2\ell(J_2(\delta, \beta))
\end{align*}
As $\delta$ goes to $0$, $\max_{J_1(\delta,\beta)\cap\partial
O(K,\delta)}|X|$ tends to $0$ and $\ell(J_1(\delta,\beta)
\cap\partial O(K,\delta))$ is bounded (since $\Omega\cap K$ is compact).
Hence for every small $\mu>0$, we can take $\beta$ and $\delta$ small
enough such that:
$$
\int_{\partial_2(K,\delta)} \left|\langle X,\nu\rangle \right|\le \mu
$$
Claim~\ref{limint} is proved.

Also we have (see Lemma 1 in \cite{ck1} for the first inequality).
\begin{align*}
\iint_{O(K,\delta)}|X|^2\le \int_{\partial
O(K,\delta)}u \langle X,\nu\rangle &=\int_{\partial_1(K,\delta)}u
\langle X,\nu\rangle +
\int_{\partial_2(K,\delta)}u \langle X,\nu\rangle+
\int_{\partial_3(K,\delta)}u \langle X,\nu\rangle\\
&=\int_{\partial_2(K,\delta)}u \langle X,\nu\rangle +
\int_{\partial_3(K,\delta)}u \langle X,\nu\rangle
\end{align*}
We notice that $|X|^2\ge 0$ and $\int_{\partial_3(K,\delta)}
u|\langle X,\nu\rangle|\le 2M\ell(\partial_3(K,\delta))\le
2M\ell(\partial_3(K,0))$.
By \eqref{lim-intdpsi}, taking $\delta\rightarrow 0$ in
the above inequality, we get
\begin{equation}\label{inegalite}
\iint_{O(K,0)}|X|^2\le
\int_{\partial_3(K,0)}u\langle X,\nu\rangle
\end{equation}

Let $p_1,\cdots, p_n$ be the point-ends of $O$; they are numbered such
that $p_1,\cdots, p_k$ are in a corridor and $\Omega$ has necks near
$p_{k+1},\cdots,p_n$. For each $i$ we consider polar coordinates $(\phi_i,
\theta_i)$ centered at $p_i$, chosen such that the hyperbolic
half-planes $\{\phi_i<0\}$ do not intersect. Let $\alpha>0$ be such that,
for every $i\in\{1,\cdots,k\}$, $O\cap \{\phi_i<0\}\subset 
\{\alpha\ge\theta_i\ge \pi-\alpha\}$ with $\alpha>0$. 

Let $\phi$ and $\psi$ be negative and $\mu>0$. Since $\Omega$
has necks near each $p_i$ with $i\ge k+1$, there is in $\Omega\cap
\{\phi_i<\psi\}$ a geodesic $\Gamma_i$ of length less than $\mu$ 
joining the two adjacent arcs at $p_i$. Let $K$ be the compact part of
$\Omega$ delimited by the geodesic $\{\phi_i=\phi\}$ for $i\le k$ and the
geodesic $\Gamma_i$ for $i\ge k+1$. Besides we denote 
$$
O_{\phi,\psi}=O\backslash\left(\Big(\bigcup_{i=1}^k\{\phi_i<\phi\}
\Big)\bigcup
\Big(\bigcup_{i=k+1}^n \ \{\phi_i<\psi\}\Big)\right)
$$

From \eqref{inegalite}, we obtain:
\begin{align*}
\iint_{O_{\phi,\psi}}|X|^2\le \iint_{O(K,0)}|X|^2&\le
\int_{\partial_3(K,0)}u\langle X,\nu\rangle\\
&\le\sum_{i=1}^k\int_{O\cap\{\phi_i=\phi\}}u\langle X,\nu\rangle
+\sum_{i=k+1}^n\int_{O\cap\Gamma_i}u\langle X,\nu\rangle\\
&\le M\sum_{i=1}^k\int_{O\cap\{\phi_i=\phi\}} |X| +2M(n-k)\mu
\end{align*}

Thus letting $\mu$ going to $0$, $\psi$ going to $-\infty$ and denoting by
$O_\phi$ the subset $O_{\phi,-\infty}$ and $I_\phi=\bigcup_{i=1}^k
O\cap\{\phi_i=\phi\}$ a part of the boundary, we get 
\begin{equation}\label{equadiff}
\iint_{O_\phi}|X|^2 \le M\int_{I_\phi} |X|
\end{equation}
Let us denote by $\eta(\phi)$ the integral in the right-hand term. By
Schwartz's Lemma, we obtain:
$$
\eta^2(\phi)\le
\ell(I_\phi) \int_{I_\phi}|X|^2\le
C(\alpha)\int_{I_\phi}|X|^2
$$
where $C(\alpha)=k\int_{\alpha}^{\pi-\alpha}
\frac{d\theta}{\sin(\theta)}$. Thus $\int_{I_\phi}
|X|^2\ge \eta^2(\phi)/C(\alpha)$ and, in \eqref{equadiff},
this gives:
\begin{equation}
\mu_0+\int_\phi^0\frac{\eta^2(t)}{C(\alpha)}dt\le M\eta(\phi)
\end{equation}
with $\mu_0>0$. Let $\zeta$ be the function defined on
$I=(-(M^2C(\alpha))/ \mu_0,0]$ by :
$$
\frac{M}{\mu_0}-\frac{1}{\zeta(t)}=-\frac{t}{MC(\alpha)}
$$
This function $\zeta$ satisfies $\zeta(0)=\mu_0/M$ and
$\zeta'=-\zeta^2/(MC(\alpha))$. Thus for $\phi\in I$ we have
$\zeta(\phi)\le \eta(\phi)$. But $\eta(\phi)\le 2\ell(I_\phi)\le
2C(\alpha)$ and $\lim_{t\rightarrow -(M^2C(\alpha))/\mu_0}
\zeta(t)=+\infty$. We have a contradiction.
\end{proof}

We have a first lemma that allows us to bound admissible solutions.

\begin{lemma}\label{underbound}
Let $\Omega$ be an admissible domain in $\H^2$. Let $u$ be an admissible
solution with $B(u)=\emptyset$ and assume there exists $m\in\R$
such that $u\ge m$ on $\partial_\infty\Omega$. Then $u$ is
bounded below in $\Omega$.
\end{lemma}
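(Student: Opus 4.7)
My plan is to argue by contradiction, combining Proposition~\ref{colkru} with a local barrier construction near the offending ideal vertex. Assume $\inf_\Omega u=-\infty$ and pick $\{q_n\}\subset\Omega$ with $u(q_n)\to-\infty$. Since $u\ge m$ is continuous on $\cup_i D_i$ and on $\cup_i C_i\setminus\overline{A(u)}$ while $u\to+\infty$ on $A(u)$, the sequence $\{q_n\}$ cannot accumulate on any compact subset of $\Omega$ or of these boundary arcs; after passing to a subsequence, $q_n\to p$ for some ideal vertex $p\in\partial_\infty\H^2$ of~$\Omega$, which by admissibility is either of type~1 (necks) or type~2 (corridor).

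I then apply Proposition~\ref{colkru} to $u_1=u$ and the trivial admissible solution $u_2\equiv m$ (with $A(u_2)=B(u_2)=\emptyset$). The boundary inequality $u_2\le u_1$ on $\partial_\infty\Omega$ follows from $A(u_2)\subset A(u_1)$, $B(u_1)=B(u_2)=\emptyset$, and $u\ge m$ on the rest. For each $\varepsilon>0$ the set $O_\varepsilon=\{u\le m-\varepsilon\}$ contains $q_n$ for large~$n$ and hence is nonempty, and its point-ends (which lie among the ideal vertices of~$\Omega$ by the preceding localization) satisfy the necks-or-corridor hypothesis. The proposition then delivers the dichotomy: either $u\ge m$ on~$\Omega$, or $u-m$ is not bounded below. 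Our assumption places us in the second case, so it remains to derive a contradiction by providing an independent lower bound for~$u$ on a neighborhood of~$p$.

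This independent lower bound comes from a barrier. In the type~1 case, the necks let me cut~$\Omega$ near~$p$ by an arbitrarily short geodesic arc~$\gamma$; on the region $U_p$ between~$\gamma$ and~$p$ — whose only ideal vertex is~$p$ itself — Theorem~\ref{th1} produces an admissible minimal graph~$w$ on~$U_p$ with $w=+\infty$ on the $A(u)$-pieces of $\partial U_p$, $w=m$ on the other pieces of $\partial\Omega\cap\partial U_p$, and $w=\min_\gamma u$ on~$\gamma$. This $w$ is bounded and satisfies $w\le u$ on $\partial U_p$, so reapplying Proposition~\ref{colkru} on the admissible domain~$U_p$ to the pair $(u,w)$ forces $u\ge w$ on $U_p$, contradicting $u(q_n)\to-\infty$. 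In the type~2 case, the analogous construction substitutes the Sa Earp--Abresch graphs $h_{\theta_0}$ of Lemma~\ref{lem:barrier}, suitably vertically translated and patched across the central geodesic $\{\theta=\pi/2\}$, in place of the Theorem~\ref{th1} barrier.

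The main obstacle is the type~2 case: a single $h_{\theta_0}$ covers only a half-corridor of angular width at most~$\pi/2$, so two such graphs must be glued across $\{\theta=\pi/2\}$ to span the full corridor $\{\alpha<\theta<\pi-\alpha\}$, and the gluing produces at best a weak sub-solution; one must check that the comparison via Proposition~\ref{colkru} goes through in this setting (for instance by approximating the patched barrier by smooth minimal graphs). The hypothesis $B(u)=\emptyset$ is used crucially throughout to guarantee that the constructed barrier stays below~$u$ on every boundary arc on which~$u$ is finite, so that the inequality $w\le u$ holds on the whole of $\partial U_p$ and the comparison closes.
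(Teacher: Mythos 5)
Your argument does not close because the comparison step at its heart is circular. Proposition~\ref{colkru} does not ``force $u\ge w$'': its conclusion is only that $u-w$ is \emph{not bounded below} whenever $\{u\le w-\ve\}$ is nonempty and its point-ends are in corridors or have necks. To convert that into the inequality $u\ge w$ you must already possess an independent lower bound for $u-w$ near the ideal vertex; since your $w$ is supposed to be bounded, such a bound is exactly the assertion of the lemma you are proving. The same circularity affects your first application of the proposition with $u_2\equiv m$, as you half-acknowledge when you note that it merely ``places us in the second case''. A second, independent gap: at a type~2 vertex the hypothesis of Proposition~\ref{colkru} that the point-end of $\{u\le m-\ve\}$ lies in a corridor is not automatic. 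Type~2 only constrains the angular behaviour of the boundary arcs $\Gamma_1,\Gamma_2$; to confine the sublevel set to $\{\alpha<\theta<\pi-\alpha\}$ one needs a lower bound for $u$ in a sectorial neighbourhood of each $\Gamma_i$, which is precisely the content of Lemma~\ref{corridor} and Proposition~\ref{minoration} --- results that come \emph{after} this lemma and whose proofs use it. Finally, the barriers you propose are themselves problematic: Theorem~\ref{th1} applies to bounded domains, not to the region $U_p$ containing the ideal vertex; the boundedness below of a graph with data $+\infty$ on the $A(u)$-pieces is again an instance of the statement to be proved; and the glued pair of Sa Earp graphs across $\{\theta=\pi/2\}$ is, as you admit, only a candidate sub-solution whose admissibility in the comparison you do not verify.

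The intended proof is far more economical and needs none of this machinery. After disposing of the finitely many vertices and endpoints of $A(u)$ lying in $\H^2$ by the maximum principle for bounded domains (Theorem~\ref{th:max}), fix an ideal vertex $p$, take polar coordinates $(\phi,\theta)$ centered at $p$, and choose $m'\le m$ with $u\ge m'$ on $\Omega\cap\{\phi=0\}$. For each $t<0$, Lemma~\ref{lem:barrier} provides an explicit minimal graph $w_t$ on the half-plane $\{\phi>t\}$ equal to $-\infty$ on the geodesic $\{\phi=t\}$ and to $m'$ on the ideal arc; it satisfies $w_t\le m'$ everywhere, the comparison $w_t\le u$ on $\Omega\cap\{t<\phi<0\}$ costs nothing along $\{\phi=t\}$ because $w_t=-\infty$ there, and letting $t\to-\infty$ gives $w_t\nearrow m'$, hence $u\ge m'$ near $p$. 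No case distinction between type~1 and type~2 vertices, and no appeal to Proposition~\ref{colkru}, is required.
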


\begin{proof}
There are only a finite number of points where such a lower-bound is
unknown: the vertices of $\Omega$ and the endpoints of arcs in $A(u)$. We
notice that there are only a finite number of such points. When an
endpoint of $A(u)$ or a vertex of $\Omega$ is in $\H^2$, a lower-bound is
given by the
 maximum principle for bounded domains. So let us consider an ideal vertex
$p$. Let $(\phi,\theta)$ be polar coordinates centered at $p$ and consider
$\Omega'=\Omega\cap \{\phi<0\}$. Let $m'\le m$ be such that $u\ge
m'$ on $\Omega\cap \{\phi=0\}$; let us prove that $u\ge m'$ in $\Omega'$.

Take $t<0$ and consider the minimal graph $w_t$ given by
Lemma~\ref{lem:barrier} on the domain $\{\phi>t\}$ which takes
the value $-\infty$ on $\{\phi=t\}$ and $m'$ on the
other boundary arc. We know that $w_t\le m'$ on $\{\phi>t\}$. By the
maximum principle for bounded domain, $w_t\le u$ on $\Omega'\cap
\{\phi>t\}$. As $t\rightarrow -\infty$, $w_t\rightarrow m'$;
hence $m'\le u$ on $\Omega'$.
\end{proof}

In the proof of Theorem~\ref{maxprinciple}, type 2 ideal vertices are the
hardest to deal with. Thus we need to be more precise for a bound near such
a vertex. In the following lemma, we use the minimal graph defined in
Lemma~\ref{lem:barrier} to control a minimal graph on one side of a type
2 ideal vertex.

\begin{lemma}\label{corridor}
For every $0<\bar{\theta}\le\pi/2$, there is a continuous increasing
function $H_{\bar{\theta}}:[0,\bar{\theta})\rightarrow \R_+$ with
$H_{\bar{\theta}}(0)=0$ such that the following is true.

Let $\Omega$ be an admissible domain in $\H^2$ and $p$ an ideal vertex
 of $\Omega$. We consider polar coordinates $(\phi,\theta)$ centered at
$p$. For $i=1,2$, let
$$
\gamma_i :
\begin{array}{cl}(0,1]&\longrightarrow\overline{\{\phi\le 0\}}^\infty\\
t&\longmapsto (\phi_i(t),\theta_i(t))
\end{array}
$$
be parametrizations of the two adjacent arcs in $\partial_\infty\Omega$
with $p$ as endpoint; we assume $\lim_{t\rightarrow 0}\gamma_i(t)=p$
$\gamma_i(1)\in\{\phi=0\}$ and $\theta_1(1)<\theta_2(1)$. Let
$\bar{\theta_2}=\liminf_{t\rightarrow 0}\theta_2(t)$; we
assume $\bar{\theta_2}>0$.

Let $u$ be an admissible solution on $\Omega$ such that $u\ge m$ in
$\gamma_1((0,1])$. Then for every $\theta_0$ and $\bar{\theta}$ with
$0<\theta_0<\bar{\theta}<\bar{\theta_2}$, there exists $\phi_0<0$ such that
:
$$
u\ge m-H_{\bar{\theta}}(\theta_0)\textrm{ on
}\Omega\cap\{\phi<\phi_0,\theta<\theta_0\}
$$
\end{lemma}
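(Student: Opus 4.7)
The plan is to set $H_{\bar{\theta}}(\theta_0):=h_{\bar{\theta}}(\theta_0)$, where $h_{\bar{\theta}}$ is the Sa Earp/Abresch-type graph from Lemma~\ref{lem:barrier}. The first integral of its ODE shows $h_{\bar{\theta}}'>0$ on $(0,\bar{\theta})$, so $H_{\bar{\theta}}$ is continuous and strictly increasing with $H_{\bar{\theta}}(0)=0$. The core idea is to compare $u$ with the $\phi$-invariant lower barrier $v(\phi,\theta):=m-h_{\bar{\theta}}(\theta)$, which is a minimal graph on $\{0<\theta<\bar{\theta}\}$ satisfying $v\le m$, with $\partial v/\partial\nu=-\infty$ along $\{\theta=\bar{\theta}\}$ (vertical tangent) and, in case $\bar{\theta}=\pi/2$, $v\to-\infty$ along $\{\theta=\pi/2\}$.

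Next I would fix $\phi_0<0$ sufficiently negative so that for every $t\in(0,1]$ with $\phi_2(t)<\phi_0$ one has $\theta_2(t)>\bar{\theta}$; this is possible by the hypothesis $\bar{\theta}<\bar{\theta_2}=\liminf_{t\to 0}\theta_2(t)$. Consequently, on $W:=\Omega\cap\{\phi<\phi_0,\theta<\bar{\theta}\}$, the arc $\gamma_2$ contributes nothing to $\partial W$. The boundary of $W$ decomposes into (i) a portion of $\gamma_1$ (where $\theta_1<\bar{\theta}$), (ii) the interior equidistant arc $E:=\{\theta=\bar{\theta}\}\cap\{\phi<\phi_0\}$, (iii) the bounded arc $I:=\{\phi=\phi_0\}\cap\Omega\cap\{\theta<\bar{\theta}\}$, and (iv) the ideal-vertex end $\phi\to-\infty$.

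I would then compare $u$ and $v$ on $W$ via the maximum principle (or a Collin-Krust flux argument, Proposition~\ref{colkru}). On (i), $u\ge m\ge v$ directly. On (ii), $u$ is smooth with finite gradient (since $E$ is interior to $\Omega$), whereas $v$ has infinite outward derivative along $E$; thus a negative infimum of $u-v$ approached at $E$ would force $\partial(u-v)/\partial\nu=+\infty>0$, contradicting the sign required at a boundary minimum. For boundary (iii), the bounded arc $I$, I would use Lemma~\ref{underbound} (or a local barrier from Lemma~\ref{lem:barrier} applied in the half-plane model) to obtain a constant lower bound for $u$ on $I$, and replace $v$ by $v-\tau$ for a sufficiently large $\tau\ge 0$ if needed. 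For (iv), the approach to $p$, one invokes Proposition~\ref{colkru}: if $u-v$ were unbounded below as $\phi\to-\infty$, the corridor hypothesis near $p$ (here $p$ is type 2, so $\Omega\cap\{\phi<\phi_0,\theta<\bar{\theta}\}$ lies in a corridor $\{\alpha<\theta<\pi-\alpha\}$ for some $\alpha>0$) together with the Collin-Krust estimate rules this out.

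Once $v\le u$ on $W$ is established, monotonicity of $h_{\bar{\theta}}$ gives, for every $q=(\phi,\theta)\in\Omega\cap\{\phi<\phi_0,\theta<\theta_0\}$,
\[
u(q)\ge v(q)=m-h_{\bar{\theta}}(\theta)\ge m-h_{\bar{\theta}}(\theta_0)=m-H_{\bar{\theta}}(\theta_0),
\]
which is the stated bound. The main difficulty I expect is making precise the combination of the maximum principle on (i)--(ii) with the control of $u$ on (iii) and the infimum behavior at the ideal vertex (iv): one must either arrange, by choice of $\phi_0$ and a vertical translation of the barrier, that $v\le u$ on $I$, or invoke the Collin-Krust proposition in the corridor region near $p$ to preclude $u-v$ from going to $-\infty$ there, exploiting that the type 2 hypothesis ensures the relevant corner lies in a corridor.
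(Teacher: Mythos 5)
Your overall strategy --- bounding $u$ from below by a vertical translate of the Sa Earp graph of Lemma~\ref{lem:barrier} --- is the same as the paper's, but the way you set up the comparison region leaves a genuine gap at your boundary piece (iii), the cut $I=\Omega\cap\{\phi=\phi_0\}\cap\{\theta<\bar{\theta}\}$. On $I$ there is no a priori relation between $u$ and $v$, and neither of your proposed fixes closes this. Even granting a finite lower bound $-\tau$ for $u$ on the compact arc $I$, shifting the barrier to $v-\tau$ only yields $u\ge m-h_{\bar{\theta}}(\theta_0)-\tau$, where $\tau$ depends on $u$ and on $\Omega$ and does not tend to $0$ with $\theta_0$; this destroys exactly the property that makes the lemma usable (in Proposition~\ref{minoration} one lets $\theta_0\to0$ to get $u\ge m-\ve$ for arbitrary $\ve>0$, which forces $H_{\bar{\theta}}$ to depend only on $\bar{\theta}$ and $\theta_0$ and to vanish at $\theta_0=0$). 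Invoking Proposition~\ref{colkru} does not help either: that proposition governs the set $\{u_1\le u_2-\ve\}$ under the hypothesis $u_1\ge u_2$ on the whole ideal boundary of the domain in question, whereas here that set can perfectly well reach $I$, where nothing is known; and its conclusion (``$u_1-u_2$ is unbounded below'') must then be contradicted by a global lower bound for $u-v$, which is precisely what you are trying to prove (Lemma~\ref{underbound} requires $u\ge m$ on all of $\partial_\infty\Omega$, not just on $\gamma_1$).

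The paper avoids the artificial cut altogether by a different choice of barrier domain: for $s<0$ it takes the bigon $O_s$ bounded by the ideal arc $D_s\subset\partial_\infty\H^2$ joining the points of polar coordinates $(s,0)$ and $(0,0)$ and by the equidistant $C_s$ at distance $d_{\bar{\theta}}$ from the geodesic $B_s$ with the same ideal endpoints. Because $O_s\subset\{\theta\le\bar{\theta}\}$ and $\gamma_2$ has been pushed above $\{\theta=\bar{\theta}\}$, the boundary $\partial(O_s\cap\Omega)$ consists \emph{only} of subarcs of $C_s$ (handled by the infinite normal derivative of the barrier $k_s$) and subarcs of $\gamma_1$ (handled by $u\ge m$ together with $k_s\ge0$); there is no third piece. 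Letting $s\to-\infty$ gives $u\ge m-k_{-\infty}$ on $\Omega\cap O_{-\infty}$, a full neighbourhood of $p$ in $\{\theta<\bar{\theta}\}$. The price is that the level sets of $k_{-\infty}$ are equidistants to $B_{-\infty}$, not the curves $\{\theta=c\}$, and the two families only agree asymptotically at $p$; this is why the paper defines $H_{\bar{\theta}}(\theta_0)=h_{\bar{\theta}}\bigl(\theta_0+\frac{\theta_0}{\bar{\theta}}(\bar{\theta}-\theta_0)\bigr)$, with built-in slack, rather than $h_{\bar{\theta}}(\theta_0)$ as you do. Your simpler definition would indeed suffice if the comparison on your region $W$ could be justified, but as written the argument does not go through.
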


\begin{proof}
Let us consider $(\phi,\theta)$ polar coordinates
at a point in $\partial_\infty\H^2$ and $\bar{\theta}\in(0,\pi/2]$. On
$\Omega_{\bar{\theta}}=\{(\theta,\phi)\in\H^2| \theta<\bar{\theta}\}$,
we consider the minimal graph $h_{\bar{\theta}}(\phi,\theta)=
h_{\bar{\theta}}(\theta)$ given by Lemma~\ref{lem:barrier} with
$h_{\bar{\theta}}=0$ on $\{\theta=0\}$ and $\dfrac{\partial
h_{\bar{\theta}}}{\partial \nu}=+\infty$ along $\{\theta=\bar{\theta}\}$,
where $\nu$ is the outward pointing normal vector. For
$\theta_0<\bar{\theta}$, we
define: 
$$
H_{\bar{\theta}}(\theta_0)=h_{\bar{\theta}}(\theta_0+
\frac{\theta_0}{\bar{\theta}}(\bar{\theta}- \theta_0))=
\max_{\{0 \le\theta\le \theta_0+\frac{\theta_0}{\bar{\theta}}
(\bar{\theta}-\theta_0)\}} h_{\bar{\theta}}
$$
We remark that $\theta_0<\theta_0+\frac{\theta_0}{\bar{\theta}}
(\bar{\theta}-\theta_0)<\bar{\theta}$ when $0<\theta_0<\bar{\theta}$.
$H_{\bar{\theta}}$ is a continuous increasing function with
$H_{\bar{\theta}}(0)=0$.

\begin{figure}[h]
\begin{center}
\resizebox{0.5\linewidth}{!}{\input{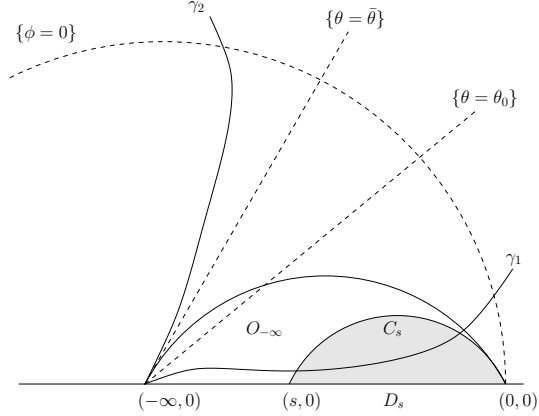}}
\caption{$O_s$ is the shadowed domain}\label{dessin5}
\end{center}
\end{figure}

Let $\Omega$, $u$, $(\phi,\theta)$ be as in the lemma. Let
$\bar{\theta}$ be less than $\bar{\theta_2}$; by changing $\phi$, we can
assume that $\theta_2(t)\ge\bar{\theta}$ for $t\in(0,1]$. Let $s$ be
negative, we consider the geodesic $B_s$ joining the points with polar
coordinates $(s,0)$ and $(0,0)$ and the arc $D_s$ in $\partial_\infty
\H^2\cap\{\phi\le 0\}$ joining both points.  Let $C_s$ be the equidistant
to $B_s$ which is at distance $d_{\bar{\theta}}$ (see \eqref{def:distance})
and is in the half-plane delimited by $B_s$ and $D_s$ (see
Figure~\ref{dessin5}). We denote by $O_s$ the domain bounded by
$C_s$ and $D_s$ ($O_s$ is included in $\theta\le \bar{\theta}$). On $O_s$,
we consider $k_s$ the minimal graph given by Lemma~\ref{lem:barrier} with
$k_s=0$ on $D_s$ and $\dfrac{\partial k_s}{\partial \nu}=+\infty$ on $C_s$.
We notice that $k_s>0$ on $O_s$. Since $\bar{\theta}<\theta_2(t)$ for every
$t$, the
boundary of $O_s\cap\Omega$ is composed of subarcs of $C_s$ and subarcs of
$\gamma_1$. Hence, by the maximum principle for bounded domains, $u\ge
m-k_s$ on $\Omega\cap O_s$. Let $s$ go to $-\infty$, $k_s$ converges to the
solution $k_{-\infty}$ on $O_{-\infty}$ with $h_{-\infty}=0$ on
$D_{-\infty}$ and $\dfrac{\partial h_{-\infty}}{\partial \nu}=+\infty$ on
$C_{-\infty}$ given by Lemma~\ref{lem:barrier}. Moreover, we have
$m-k_{-\infty}\le u$ on $\Omega\cap O_{-\infty}$. Fix
$0<\theta_0<\bar{\theta}$. Because of the definition of
$H_{\bar{\theta}}$, there is $\phi_0$ such that
$$
k_{-\infty}\le H_{\bar{\theta}}(\theta_0)\textrm{ on }\{\phi<\phi_0,
\theta<\theta_0\}
$$
which concludes the lemma.
\end{proof}

Actually, this Lemma says that if a solution is bounded below on one of
the two boundary components with $p$ as endpoint, then the solution is
bounded below in some ``sectorial'' neighborhood of this boundary
component.

Now we have the following result
\begin{proposition}\label{minoration}
Let $\Omega$ be an admissible domain and $u$ an admissible solution. Let
$p\in\partial\Omega$ be a type 2 ideal vertex of $\Omega$. We assume
there exists $m\in\R$ such that $u\ge m$ near $p$ on $\partial\Omega$.
Then, for every $\ve>0$, $u\ge m-\ve$ in a neighborhood of $p$
in $\Omega$.
\end{proposition}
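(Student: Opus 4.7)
My plan is to combine Lemma~\ref{corridor}, applied to each of the two adjacent arcs at $p$ via the two halves of the type~2 hypothesis, with a Jenkins--Serrin barrier on the remaining ``middle'' corridor produced via Theorem~\ref{th2}; the comparison is then closed using Proposition~\ref{colkru} on a subdomain in which $p$ is genuinely in a corridor.

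Fix polar coordinates $(\phi,\theta)$ at $p$ with $\theta_1(1)<\theta_2(1)$, and set $\bar\theta_2=\liminf_{t\to 0}\theta_2(t)>0$ and $\bar\theta_1=\limsup_{t\to 0}\theta_1(t)<\pi$. Given $\varepsilon>0$, choose $\bar\theta\le\pi/2$ with $\bar\theta<\min(\bar\theta_2,\pi-\bar\theta_1)$ and $\theta_0\in(0,\bar\theta)$ such that $H_{\bar\theta}(\theta_0)<\varepsilon/2$. Lemma~\ref{corridor} applied directly (lower arc $\gamma_1$) gives $\phi_0<0$ with $u\ge m-\varepsilon/2$ on $\Omega\cap\{\phi<\phi_0,\ \theta<\theta_0\}$; the symmetric application obtained from the reflection $\theta\mapsto\pi-\theta$ exchanges $\gamma_1,\gamma_2$ and similarly yields $u\ge m-\varepsilon/2$ on $\Omega\cap\{\phi<\phi_0,\ \theta>\pi-\theta_0\}$.

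For the middle, I set $S=\{\phi<\phi_0,\ \theta_0<\theta<\pi-\theta_0\}$: this is an ideal Scherk domain with single ideal vertex $p$, bounded by the geodesic arc $\{\phi=\phi_0\}$ and two convex equidistants. I mark $\{\phi=\phi_0\}$ by $-\infty$ and impose the constant value $m-\varepsilon/2$ on the two equidistants. The only inscribed polygonal domain is $S$ itself; truncating at a horocycle $\{\phi=\phi_H\}$ at $p$ one has $\alpha=0$ and $\beta=|\{\phi=\phi_0\}|$ fixed, while $\gamma$ contains two equidistant segments of lengths $(\phi_0-\phi_H)/\sin\theta_0$, which diverge as $\phi_H\to-\infty$; hence Theorem~\ref{th2} yields a minimal graph $v:S\to\R$ with the prescribed boundary values, and comparison with the constant $m-\varepsilon/2$ on truncations of $S$ gives $v\le m-\varepsilon/2$ on $S$. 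I then compare $u$ and $v$ on $\Omega''=\Omega\cap S$ via Proposition~\ref{colkru}: both are admissible on $\Omega''$; $u\ge v$ on $\partial_\infty\Omega''$ (equidistants by Step~1, geodesic since $v=-\infty$, and $v\le m-\varepsilon/2\le m\le u$ on any subarc of $\gamma_1$ or $\gamma_2$ that enters $S$); the unique ideal vertex $p$ of $\Omega''$ lies in a corridor by construction of $S$; and $u-v$ is bounded below near $p$, because $u$ is locally bounded below by Lemma~\ref{underbound} (no $-\infty$ boundary arcs are adjacent to $p$ by hypothesis) while $v\le m-\varepsilon/2$. Proposition~\ref{colkru} then forces $\{u<v-\eta\}\cap\Omega''=\emptyset$ for every $\eta>0$, so $u\ge v$ on $\Omega''$.

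The crucial remaining ingredient is the asymptotic $v(\phi,\theta)\to m-\varepsilon/2$ as $\phi\to-\infty$; granted this, $u\ge v$ on $\Omega''$ together with Step~1 yields $u\ge m-\varepsilon$ in a neighborhood of $p$ in $\Omega$. The upper bound $v\le m-\varepsilon/2$ is immediate; the matching lower bound I would build by placing a rescaled Sa~Earp graph of Lemma~\ref{lem:barrier} inside $S$ as a subsolution, exploiting that both equidistants carry the constant value $m-\varepsilon/2$ all the way to $p$. This boundary-regularity statement for a Jenkins--Serrin solution at an ideal vertex is the most delicate point of the plan; any additional complication arising when $\gamma_1$ or $\gamma_2$ enters $S$ in the general type~2 regime is absorbed automatically by the bound $v\le m\le u$ on such subarcs.
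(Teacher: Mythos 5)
Your opening and closing moves are the same as the paper's: apply Lemma~\ref{corridor} on both sides of $p$ (this is exactly how the type~2 hypothesis is used there) to confine the bad set to a corridor, and then invoke Proposition~\ref{colkru} together with the lower bound from Lemma~\ref{underbound} to rule it out. The difference, and the problem, is your choice of barrier. You introduce an auxiliary Jenkins--Serrin solution $v$ on the middle strip $S$ and your entire argument rests on the asymptotic statement $v(\phi,\theta)\to m-\ve/2$ as $\phi\to-\infty$, which you do not prove. This is a genuine gap, not a routine verification: it is a boundary-regularity assertion for an abstractly constructed solution at an ideal vertex where two boundary arcs carrying constant data meet, i.e.\ a statement of essentially the same nature and difficulty as Proposition~\ref{minoration} itself. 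The tools available at this point do not deliver it: Lemma~\ref{corridor} applied to $v$ on $S$ only gives $v\ge m-\ve/2-H_{\bar\theta}(\theta')$ for $\theta'$ strictly larger than the aperture $\theta_0$ of $S$, a loss bounded away from zero; Corollary~\ref{minoration2} is downstream of the very proposition you are proving; and your proposed ``rescaled Sa~Earp subsolution'' runs into the obstruction that the natural Sa~Earp graph on the half-plane $\{\phi<\phi_0\}$ tends to $m$ (not $m-\ve/2$) along the equidistants $\{\theta=\theta_0\}$ as one approaches $p$, so the boundary comparison $h\le v$ on $\partial S$ fails precisely near the vertex you care about.

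The paper avoids this entirely by not introducing a middle strip at all: it takes as barrier the explicit translation-invariant graph $h$ of Lemma~\ref{lem:barrier} on the \emph{whole} half-plane $\{\phi<\phi_0\}$, with $h=-\infty$ on the geodesic $\{\phi=\phi_0\}$ and $h=m$ on the ideal boundary arc. For this explicit $h$, continuity up to the ideal arc with value $m$ is immediate from formula~\eqref{expressionh}, so $h\ge m-\ve$ near $p$ costs nothing; one then sets $O=\{u<h-\ve\}$, uses your Step~1 (the two sectorial bounds from Lemma~\ref{corridor}, together with $h\le m$) to show the only possible point-end $p$ of $O$ is in a corridor, and concludes from Proposition~\ref{colkru} that $u-h$ is unbounded below, contradicting Lemma~\ref{underbound}. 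If you replace your $v$ by this $h$, your argument closes; as written, the key analytic step is missing.
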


\begin{proof}
Let $(\phi,\theta)$ be polar coordinates centered at $p$. We assume that
$u\ge m$
on $\partial\Omega\cap\{\phi\le 0\}$. Let $h$ be the minimal graph over
$\{\phi<0\}$ given by Lemma~\ref{lem:barrier} with boundary
values $h=-\infty$ on $\{\phi=0\}$ and $h=m$ on the other
boundary arc . For every $\ve>0$, we have $h\ge m-\ve$ on a
neighborhood of $p$, so it suffices to prove that $h\le u$ on $\Omega
\cap \{\phi<0\}$.

If $\{u<h\}$ is nonempty, consider $\ve>0$ a regular value of $h-u$
such that $\{u<h-\ve\}\neq\emptyset$. The only possible point-end of
$\{u<h-\ve\}$ is $p$. Let us prove that $p$ is in a corridor.
Let $\gamma_i=(\phi_i,\theta_i)$ be parametrizations defined on $(0,1]$ of 
both boundary arcs adjacent at $p$  in $\overline{\{\phi<0\}}^\infty$ with
$\lim_{t\rightarrow 0}\gamma_i(t)=p$, $\phi_1(1)=\phi_2(1)=0$ and
$\theta_1(1)<\theta_2(1)$. Since $p$ is of type 2, $\liminf_{t\rightarrow
0} \theta_2(t)>0$. Let $0<\bar{\theta}<\liminf_{t\rightarrow 0}
\theta_2(t)$, $H_{\bar{\theta}}$ be defined by
Lemma~\ref{corridor} and $\theta'\in(0,\bar{\theta})$ such that
$H_{\bar{\theta}}(\theta')<\ve$. Lemma~\ref{corridor} gives $\phi'<0$ such
that $u\ge m-H_{\bar{\theta}}(\theta')\ge m-\ve$ on $\Omega \cap
\{\phi<\phi',\theta<\theta'\}$. Applying Lemma~\ref{corridor} also on the
other side of $p$, we obtain $\phi_0<0$ and $\theta_0>0$ such
that $u\ge m-\ve$ in $\{\phi<\phi_0\}\cap \{\sin(\theta)<\sin(\theta_0)\}$.
Since $h\le m$ in $\{\phi<0\}$, we have $\{u<h-\ve\}\cap
(\{\phi<\phi_0\} \cap \{\sin(\theta)<\sin(\theta_0)\})=\emptyset$. Thus the
end is in a corridor. Theorem \ref{colkru} now implies that $u$ is not
bounded below near $p$, that contradicts Lemma~\ref{underbound} 
\end{proof}

We can now give the proof of the general maximum principle
(Theorem~\ref{maxprinciple}). We recall that the proof is written in the
case \eqref{case3}.

\begin{proof}[Proof of Theorem~\ref{maxprinciple}]
Let $\Omega$, $u_1$ and $u_2$ be as in the theorem and assume that
$u_2\le u_1$ is not true in the whole $\Omega$, so we can choose
$\ve>0$ such that $\{u_1\le u_2-\ve\}$ is nonempty. Since
$u_1>u_2-\ve$ on the arcs $D_i$, the point-ends of $\{u_1\le u_2-\ve\}$
are among the ideal vertices of $\Omega$. In particular, $\{u_1\le
u_2-\ve\}$ has a finite number of point-ends. Let us prove that each
point-end associated to a type 2 vertex of $\Omega$ is in a corridor.

Let $p$ be a point-end which is a type 2-i vertex of $\Omega$. Let
$\Gamma_1$ and $\Gamma_2$ denote the two components of
$\partial_\infty\Omega$ with $p$ as endpoint and consider polar coordinates
$(\phi,\theta)$ centered at $p$. There is $\phi_0$ such that
$$
u_1\ge \liminf_{\substack{x\in\Gamma_i\\ x\rightarrow p}} u_1-\ve/4
\ \textrm{ and }\  u_2\le \limsup_{\substack{x\in\Gamma_i\\ x\rightarrow
p}} u_2+\ve/4\ \textrm{ on }\Gamma_i\cap\{\phi<\phi_0\}
$$
Using Lemma~\ref{corridor} as in the proof of Lemma~\ref{minoration}, there
exist $\phi_1<\phi_0$ and $\theta_1\in(0,\pi/2)$ such that 

\begin{align*}
u_1&\ge \liminf_{\substack{x\in\Gamma_1\\ x\rightarrow p}}
u_1-\ve/2\textrm{ on } \Omega\cap\{\phi\le \phi_1,\theta<\theta_1\}\\
u_2&\le \limsup_{\substack{x\in\Gamma_1\\ x\rightarrow p}}
u_2+\ve/2\textrm{ on } \Omega\cap\{\phi\le \phi_1,\theta<\theta_1\}\\
u_1&\ge \liminf_{\substack{x\in\Gamma_2\\ x\rightarrow p}}
u_1-\ve/2\textrm{ on } \Omega\cap\{\phi\le \phi_1,\theta>\pi-\theta_1\}\\
u_2&\le \liminf_{\substack{x\in\Gamma_2\\ x\rightarrow p}}
u_2+\ve/2\textrm{ on } \Omega\cap\{\phi\le \phi_1,\theta>\pi-\theta_1\}
\end{align*}
Thus on $\Omega\cap\{\phi\le \phi_1,\theta<\theta_1\}$, we have
$$
u_1-u_2\ge \liminf_{\substack{x\in\Gamma_1\\ x\rightarrow p}} u_1-\ve/2-
(\limsup_{\substack{x\in\Gamma_1\\ x\rightarrow p}} u_2+\ve/2)\ge-\ve
$$
In $\Omega\cap\{\phi\le \phi_1,\theta>\pi-\theta_1\}$, we also have
$u_1-u_2>-\ve$. So $p$ is in a corridor.

In the case the point-end $p$ of $\{u_1\le u_2-\ve\}$ is a type 2-ii
vertex of $\Omega$, we can choose polar coordinates $(\phi,\theta)$
centered at $p$ such that the geodesic arc $A$ is in $\{\theta=\pi/2\}$ and
$\Gamma\subset
\overline{\{\theta<\pi/2\}}^\infty$. As above, we prove that there exist
$\phi_1$ and $\theta_1>0$ such that $u_1-u_2>-\ve$ in $\Omega\cap\{\phi\le
\phi_1,\theta<\theta_1\}$. So, $p$ is in a corridor.

Therefore, we have proved that either the point-ends of $\{u_1\le
u_2-\ve\}$ are in corridors or $\Omega$ has necks near them. Thus
Proposition~\ref{colkru} assures $u_1-u_2$ is not bounded below. 

Let $p$ be an ideal vertex of $\Omega$ of type 2-i. By
Lemma~\ref{underbound}, there are $m_1$
and $m_2$ in $\R$ such that $u_1\ge m_1$ and $u_2\le m_2$ in a neighborhood
of $p$ , so $u_1-u_2\ge m_1-m_2$ in a neighborhood of $p$. Since the
number of type 2-i vertices is finite, there is $m<0$ such that
$u_1-u_2\ge m$ in neighborhood of type 2-i vertices. Moreover $m$ can be
chosen to be a regular value for $u_1-u_2$. So let us denote the nonempty
set
$$
O=\{u_1-u_2\le m\}.
$$
In fact the value of $m$ is not already fixed : in the following, we shall
need to decrease $m$ a finite number of times (these changes are only
linked to the geometry of the domain).

We notice that $\partial O\cap(\cup_i D_i)=\emptyset$ and $\partial O\cap
(\cup_i C_i)\subset \overline{B(u_1)\cup A(u_2)}$. $O$ has a finite number
of point-ends which correspond to ideal vertices of type 1 or 2-ii. Let us
them denote by $p_1,\cdots,p_n$ and by $(\phi_i,\theta_i)$ polar
coordinates centered at $p_i$. As in the proof of Proposition~\ref{colkru},
for $\delta>0$ small, we denote by $N_\delta$ the closed
$\delta$-neighborhood of $\overline{B(u_1)\cup A(u_2)}$ and we define:
$$
O(\phi,\delta)=O\backslash \big(N_\delta\bigcup (\cup_i\{\phi_i\le
\phi\})\big) 
$$
Its boundary $\partial O(\phi,\delta)\subset \Omega$ is piecewise smooth
and is composed of three parts:
\begin{itemize}
\item $\partial_1(\phi,\delta)=\partial O(\phi,\delta)\cap
\partial O$, where $u_2-u_1=-m$,
\item $\partial_2(\phi,\delta)=\partial O(\phi,\delta)\cap
\partial N_\delta$,
\item $\partial_3(\phi,\delta)=\partial O(\phi,\delta)\cap (
\cup_i\{\phi_i= \phi\} \backslash \partial O)$.
\end{itemize}
We call $X=X_{u_2}-X_{u_1}$ and $\nu$ the outgoing normal to
$\partial O(\phi,\delta)$. We have:
$$
0=\int_{\partial O(\phi,\delta)} \langle X,\nu\rangle=\int_{\partial_1
(\phi,\delta)} \langle X,\nu\rangle + \int_{\partial_2 (\phi,\delta)}
\langle X,\nu\rangle+ \int_{\partial_3 (\phi,\delta)} \langle X,\nu\rangle 
$$
We notice that along $\partial_1(\phi,\delta)$, $\nabla u_2-\nabla u_1$
points into $O$ so $X$ points to $O$. Hence $\langle X,\nu\rangle$ is
negative on $\partial_1(\phi,\delta)$ (see Lemma 2 in \cite{ck1}).
Besides, we have $|X|\le 2$ and the length of $\partial_3(\phi,\delta)$
is uniformly bounded for fixed $\phi$ since either the point-ends of $O$
are in corridors or $\Omega$ has necks at them.
Thus, with $K=\cap_i\{\phi_i>\phi\}$, Claim~\ref{limint} implies that,
letting $\delta$ goes to $0$, we obtain:
$$
0=\int_{\partial_1 (\phi,0)} \langle X,\nu\rangle+ \int_{\partial_3
(\phi,0)} \langle X,\nu\rangle
$$
Or
$$
0<-\int_{\partial_1 (\phi,0)} \langle X,\nu\rangle = \int_{\partial_3
(\phi,0)} \langle X,\nu\rangle
$$
We can decomposed $\partial_3 (\phi,0)$ in a finite number of parts 
$\gamma_1(\phi),\cdots,\gamma_n(\phi)$: $\gamma_i(\phi)$ is the part of
$\partial_3 (\phi,0)$ in $\{\phi_i=\phi\}$. Thus we have:
$$
-\int_{\partial_1 (\phi,0)} \langle X,\nu \rangle
=\sum_{i=1}^n\int_{\gamma_i(\phi)} \langle X,\nu\rangle
$$
The left-hand term is positive and increases as $\phi\searrow -\infty$.
Thus we get a contradiction and Theorem~\ref{maxprinciple} is proved once
we have established the following claim:
\begin{claim}\label{claimfinal}
For every $i$, we have
$$
 \limsup_{\phi\rightarrow -\infty} \int_{\gamma_i(\phi)} \langle
X,\nu\rangle \le 0 
$$
\end{claim}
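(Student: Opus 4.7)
The plan is to treat each point-end $p_i$ of $O$ separately, according to whether it is a type~1 or a type~2-ii ideal vertex of $\Omega$ (those are the only possibilities for point-ends of $O$, as observed in the proof). In both cases the main tool is the divergence theorem for $X = X_{u_2}-X_{u_1}$, which has vanishing divergence, combined with the sign information already extracted in the main proof ($\langle X,\nu\rangle \le 0$ along $\partial O$ with $\nu$ outward from $O$).

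For a type~1 point-end, I would fix $\eta>0$ and use the neck hypothesis to choose a geodesic arc $N_\eta\subset \Omega$ of length less than $\eta$ joining the two arcs of $\partial_\infty\Omega$ adjacent to $p_i$, with $N_\eta\subset\{\phi_i<\phi_\eta\}$ for some $\phi_\eta<0$. Denote by $\Omega_\eta$ the cusp-side component of $\Omega\setminus N_\eta$ with $p_i$ in its ideal boundary. For $\phi<\phi_\eta$ apply the divergence theorem to $X$ on the precompact region $R_\phi = O\cap\Omega_\eta\cap\{\phi_i>\phi\}$; the resulting identity rewrites $\int_{\gamma_i(\phi)}\langle X,\nu\rangle$ as the sum of three boundary contributions. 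The $N_\eta$-contribution has absolute value at most $2\eta$; the $\partial O$-contribution is nonpositive after taking the sign into account; and the $\partial\Omega$-contribution is controlled by the hypothesis $u_2\le u_1$ on $\partial_\infty\Omega$ together with Lemma~\ref{lem:flux}: on each piece of $\partial\Omega$ adjacent to $p_i$, where $u_1,u_2$ agree at infinity $F_X=0$, where $u_1=+\infty$ and $u_2$ is bounded or $-\infty$ one has $F_X\le 0$, and symmetrically where $u_2=-\infty$. Passing $\eta\to 0$ then yields the desired $\limsup\le 0$.

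For a type~2-ii point-end, without loss of generality the geodesic arc $A\subset A(u_2)\subset A(u_1)$ is one of the two adjacent boundary arcs and $\Gamma$ is the other, with $\liminf_{p_i}u_1+\ve>\limsup_{p_i}u_2$ along $\Gamma$. I would pick polar coordinates at $p_i$ so that $A\subset\{\theta_i=\pi/2\}$ and $\Gamma\subset\overline{\{\theta_i<\pi/2\}}^\infty$, then apply Lemma~\ref{corridor} twice (once to $u_1$ and once to $-u_2$) to produce $\phi_0<0$ and $\theta_0\in(0,\pi/2)$ with $u_1-u_2>-\ve''$ on $\Omega\cap\{\phi_i<\phi_0,\,\theta_i<\theta_0\}$ for any preassigned $\ve''>0$. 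Decreasing $m$ so that $-m>\ve''$ (which is permitted), the set $O=\{u_1-u_2\le m\}$ avoids this sector and $\gamma_i(\phi)$ is confined to a thin strip $\{\theta_i\ge\theta_0\}$ along the $A$-geodesic. On this strip both $u_1$ and $u_2$ tend to $+\infty$ along $A$, so $X_{u_1},X_{u_2}$ both approach the common outward unit normal to $A$ and $X\to 0$ on the relevant level sets; a flux computation analogous to the type~1 case then concludes.

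The main obstacle I foresee is in the type~2-ii step: converting the qualitative convergence $X\to 0$ into a quantitative $\limsup\le 0$ requires a careful divergence-theorem argument on a region bounded partly by $A$ (where both $X_{u_j}$ degenerate to the unit normal) and partly by the Lemma~\ref{corridor} barrier, and one must again verify that each boundary contribution has the right sign using $u_2\le u_1$. The type~1 case, while conceptually cleaner via the short neck, also demands a case-by-case sign check of $F_{X_{u_2}-X_{u_1}}$ on each type of boundary arc adjacent to $p_i$.
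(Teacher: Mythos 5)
Your type~1 argument is essentially the paper's: cut off the cusp by a short neck geodesic of length less than $\mu$, apply the divergence theorem to $X=X_{u_2}-X_{u_1}$ on the resulting compact piece of $O$, use that $X$ points into $O$ along the level set $\{u_1-u_2=m\}$ so that contribution is nonpositive, and let $\mu\to0$; the contribution from $\partial\Omega$ near $\overline{A(u_2)\cup B(u_1)}$ is disposed of exactly as in Claim~\ref{limint}. That part is fine.

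The type~2-ii case has a genuine gap. After invoking Lemma~\ref{corridor} and decreasing $m$, you correctly conclude that $O\cap\{\phi_i\le\bar{\phi}\}$ is confined to the region $\{\bar{\theta}\le\theta_i\le\pi/2\}$ between the geodesic $A$ and an equidistant curve. But this region is not thin: its cross-sections $\{\phi_i=\phi\}\cap\{\bar{\theta}\le\theta_i\le\pi/2\}$ have constant hyperbolic length $\int_{\bar{\theta}}^{\pi/2}\frac{d\theta}{\sin\theta}>0$, independent of $\phi$. Your next step --- ``both $u_1$ and $u_2$ tend to $+\infty$ along $A$, so $X_{u_1},X_{u_2}$ both approach the common outward normal to $A$ and $X\to 0$ on the relevant level sets'' --- is not justified: the vector fields $X_{u_j}$ are forced to align with the normal to $A$ only at points approaching $A$ itself, not uniformly on a strip at distance up to $d_{\bar{\theta}}$ from $A$, and nothing you have established prevents $\gamma_i(\phi)$ from carrying a flux bounded away from $0$ as $\phi\to-\infty$. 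The paper closes exactly this hole with an additional barrier comparison: it builds two explicit translation-invariant graphs $h_1\le u_1$ and $h_2\ge u_2$ from Lemma~\ref{lem:barrier} and computes in half-plane coordinates that $h_1-h_2\ge -\ln 3-c_{\theta_\infty}-2$ on $\{x\ge 2\}$; choosing $m$ below this bound forces $O\cap\{\phi_i\le\phi_0\}\subset\{0\le x\le 2\}$, a region whose cross-sections have hyperbolic length tending to $0$, whence $\bigl|\int_{\gamma_i(\phi)}\langle X,\nu\rangle\bigr|\le 2\ell(\gamma_i(\phi))\to 0$. Some quantitative confinement of this kind is indispensable; the qualitative assertion that $X\to 0$ cannot be extracted from the information you have at that point.
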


First we suppose $p_i$ is a type 1 vertex. Let $\phi_0<0$ be fixed. Since
$p_i$ is a type 1 vertex, for each $\mu>0$ there is a geodesic arc $\Gamma
\subset \Omega\cap \{\phi<\phi_0\}$ of length less than
$\mu$. $\Gamma$ separates $\Omega\cap \{\phi<\phi_0\}$ into a non compact
component and a compact part $\Omega_\Gamma$. Let $\phi_1<\phi_0$ be
such that $\Gamma\in\{\phi>\phi_1\}$. As above we can compute the
flux of $X$ along the boundary of $O\cap \Omega_\Gamma$ and we get:
$$
0=\int_{\partial(O\cap\Omega_\Gamma)} \langle X, \nu'\rangle
=\int_{\partial_1(\phi_1, 0)\cap\Omega_\Gamma}\langle X,\nu'\rangle
+\int_{O\cap\Gamma} \langle X,\nu'\rangle -\int_{\gamma_i(\phi_0)}
\langle X,\nu\rangle
$$
with $\nu'$ the outgoing normal from $O\cap \Omega_\Gamma$. The sign of
the last term comes from the fact that $\nu'=-\nu$ along $\gamma_i(\phi)$.
As above, $X$ points to $O\cap \Omega_\Gamma$ along $\partial_1(\phi_1,
0)\cap\Omega_\Gamma$, thus $\int_{\partial_1(\phi_1, 0)\cap\Omega_\Gamma}
\langle X,\nu'\rangle \le 0$ and
$$
\int_{\gamma_i(\phi_0)}\langle X,\nu\rangle =\int_{\partial_1(\phi_1,
0)\cap\Omega_\Gamma} \langle X,\nu'\rangle + \int_{O\cap\Gamma}
\langle X,\nu'\rangle \le 2\ell(\Gamma)\le 2\mu
$$
The above inequality occurs for every $\mu>0$. Then
$\int_{\gamma_i(\phi_0)} \langle X,\nu\rangle\le 0$ and the claim is proved
when $p_i$ is a type $1$ vertex of $\Omega$.

Let us now suppose $p_i$ is a type 2-ii vertex of $\Omega$. We choose the
polar coordinates centered $p_i$ such that the geodesic arc
$A$ is in $\{\theta=\pi/2\}$ and the arc $\Gamma$ is in
$\{\theta<\pi/2\}$. We fix $\phi_0<0$.
Let $G:[0,1]\rightarrow\H^2\cup\partial_\infty\H^2$ be a parametrization of
$\Gamma$, in polar coordinates $G(t)=(\phi(t),\theta(t))$ for $t>0$ with
$\phi(1)=\phi_0$. Since $p_i$ is an endpoint of $\Gamma$,
$\lim_{t\rightarrow 0}\phi(t)=-\infty$. Let $\theta_\infty$ be
$\limsup_{t\rightarrow
0}\theta(t)$. If $\theta_\infty=\pi/2$, we have $\liminf_{t\rightarrow
0}d(G(t),A)=0$ as in type 1 vertices and we can apply the above proof.

We then assume $\theta_\infty<\pi/2$. Let us consider
$\bar{\theta}\in(\theta_\infty, \pi/2)$. By changing $\phi_0$, we
can assume that $\theta(t)<\bar{\theta}$ for every $t\in(0,1]$. 

Let us define $u_1^\infty=\liminf_{\substack{x\in\Gamma\\ x\rightarrow
p}} u_1(x)$ and $u_2^\infty=\limsup_{\substack{x\in\Gamma\\ x\rightarrow
p}} u_2(x)$. From Lemma~\ref{corridor} and Proposition~\ref{minoration},
there
are $\bar{\phi}<\phi_0$  and $\overline{m}\ge 1$ such that $u_1\ge
u_1^\infty-1$ on $\Omega\cap\{\phi<\bar{\phi}\}$
and $u_2\le u_2^\infty+\overline{m}$ on $\Omega\cap \{\phi<\bar{\phi},
\theta<\bar{\theta}\}$. Thus on $\Omega\cap\{\phi<\bar{\phi},
\theta\le\bar{\theta}\}$, $u_1-u_2\ge u_1^\infty-1-u_2^\infty-\bar{m}\ge
-1-\overline{m}$. So, if $m$ is chosen less than $-1-\overline{m}$, we
have $(O\cap\{\phi\le\bar{\phi}\})\subset \{\bar{\theta}\le\theta\le
\pi/2\}$.

We can change the polar coordinate $\phi$ to have $\bar{\phi}=0$. Let
$\Omega_1$ be the domain bounded by the geodesic joining $p_i$ to
the point $p_-$ of polar coordinates $(a,\pi)$ ($a<0$) and the equidistant
to this geodesic which is at distance $d_{\theta_\infty}$ (see
\eqref{def:distance}) such that $\Omega\cap\Omega_1\neq\emptyset$. Here,
$a$ is chosen such that
$\Omega_1\subset\{\phi<0\}$ (see Figure~\ref{dessin3}).
By Lemma~\ref{lem:barrier}, there exists the minimal graph $h_1$ define
d on $\Omega_1$ with value $+\infty$ on the geodesic boundary component and
value $u_1^\infty-1$ on the equidistant. Let $\Omega_2$ be the domain
delimited
by the geodesic joining $p_i$ to the point $p_+$ of polar coordinates
$(a,0)$ and the arc in $\partial_\infty\H^2$ joining $p_i$ to $p_+$ (
\textit{i.e.}
in polar coordinates, $(-\infty,a)\times\{0\}$). On $\Omega_2$, we consider
the minimal graph $h_2$ with value $+\infty$ on the geodesic boundary
component and
$u_2^{\infty}+1$ on the arc in $\partial_\infty\H^2$. As in the proof of
Lemma~\ref{corridor}, we ca deduce $h_1\le u_1$ in $\Omega\cap\Omega_1$
and
$u_2\le h_2$ on $\Omega\cap\Omega_2$. Hence $u_1-u_2\ge h_1-h_2$ in
$\Omega\cap \Omega_1\cap \Omega_2$ so let
us bound $h_1-h_2$ below in $\Omega_1\cap \Omega_2$.

First, because of the definition of $\Omega_1$, there is $\bar{\phi_0}$
such that $O\cap\{\phi\le\bar{\phi_0}\}\subset\{\phi\le\bar{\phi_0},
\bar{\theta}\le\theta\le\pi/2\}\subset \Omega_1$.

\begin{figure}[h]
\begin{center}
\resizebox{0.7\linewidth}{!}{\input{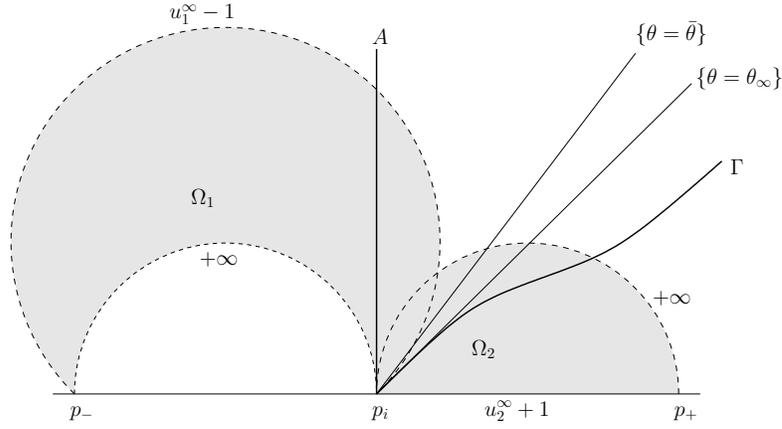}}
\caption{The domains $\Omega_1$ and $\Omega_2$ in $\H^2$} \label{dessin3}
\end{center}
\end{figure}

To make some computations, we use other coordinates : we consider
$\H^2=\R\times\R_+^*$ with the classical hyperbolic metric such that $p$ is
the infinity, $p_+=(1,0)$ and $p_-=(-1,0)$. We have
$\Omega\subset\R_+^*\times\R_+^*$ near $p$, $\Omega_1=\{(x,y)\in
(-1,+\infty)\times\R_+^*|y>\tan(\theta_\infty) (x+1)\}$ and
$\Omega_2=(1,+\infty)\times \R_+^*$. In fact, the points of polar
coordinates $(\phi,\theta)$ becomes
$(x,y)=e^{-(\phi-a)}(\cos(\theta),\sin(\theta))$. The functions
$h_1$ and $h_2$ have the following expressions (see \eqref{expressionh}): 
\begin{align*}
h_1(x,y)&=\ln\left(\sqrt{1+\left(\frac{y}{x+1}\right)^2}+\frac{y}{x+1}
\right)- c_{\theta_\infty} +u_1^\infty-1 \\
h_2(x,y)&=\ln\left(\sqrt{1+\left(\frac{y}{x-1}\right)^2}+\frac{y}{x-1}
\right)+u_2^\infty+1
\end{align*}
where $c_{\theta_\infty}$ is a constant which depends only on
$\theta_\infty$.

With $a_1=y/(x+1)$ and $a_2=y/(x-1)$ this gives:
\begin{align*}
h_1(x,y)-h_2(x,y)&=\ln\left(\frac{\sqrt{1+a_1^2}+a_1}{\sqrt{1+a_2^2}
+a_2}\right)-c_{\theta_\infty} +u_1^\infty-1-u_2^\infty-1\\
&\ge \ln\left(\frac{\sqrt{1+a_1^2}+a_1}{\sqrt{1+a_2^2}
+a_2}\right)-c_{\theta_\infty} -2
\end{align*}

We have $a_2/a_1=(x+1)/(x-1)$ thus on $\{x\ge 2\}$, $1\le a_2/a_1\le 3$.
So, on $\{x\ge 2\}$:
$$
\frac{1}{3}\le \frac{\sqrt{1+a_1^2}+a_1}{\sqrt{1+a_2^2}
+a_2} \le 1
$$
and $h_1(x,y)-h_2(x,y)\ge -\ln 3-c_{\theta_\infty} -2$ on $\{x\ge
2\}\cap(\Omega_1\cap\Omega_2)$. Thus if $m$ is chosen to be less than $-\ln
3-c_{\theta_\infty} -2$, we have:
$$
(O\cap\{\phi\le\phi_0\})\subset \{0\le x\le 2\}
$$
Then $\lim_{\phi\rightarrow-\infty} \ell(\gamma_i(\phi))=0$. This gives 
Claim~\ref{claimfinal} since :
$$
\left|\int_{\gamma_i(\phi))} \langle X,\nu\rangle \right|\le
2\ell(\gamma_i(\phi))\xrightarrow[\phi\rightarrow -\infty]{}0
$$
This completes the proof of Theorem~\ref{maxprinciple}.
\end{proof}

This maximum principle gives immediately a lower-bound result and
a uniqueness result:
\begin{corollary}\label{minoration2}
Let $\Omega$ be an admissible domain and $u$ an admissible solution.
We assume there exists $m\in\R$ such that $u\ge m$ on
$\partial_\infty\Omega$. Then $u\ge m$ in $\Omega$.
\end{corollary}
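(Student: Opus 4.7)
The plan is to apply the general maximum principle, Theorem~\ref{maxprinciple}, with $u_1:=u$ and $u_2\equiv m$ the constant function $m$. The constant $m$ is trivially an admissible minimal graph on $\Omega$ with $A(u_2)=B(u_2)=\emptyset$, extending continuously to every boundary arc, so the only work is to check that the hypotheses of the theorem are met.

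First I would verify the inequality $u_2\le u_1$ on $\partial_\infty\Omega$ in the sense defined in the paper. The assumption $u\ge m$ on $\partial_\infty\Omega$ forces $B(u)=\emptyset$, since on $B(u)$ one would otherwise have $-\infty\ge m$. Hence $B(u_1)=\emptyset\subset B(u_2)$ and trivially $A(u_2)=\emptyset\subset A(u_1)$, and on the remaining part of $\partial_\infty\Omega$ where both functions take finite values the pointwise inequality $m\le u$ is just the hypothesis.

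Next I would check the asymptotic conditions at each ideal vertex $p$. Type 1 vertices are immediate from the admissibility of $\Omega$. At a type 2 vertex, condition type 2-ii is vacuous, since it would require a geodesic arc sitting inside $A(u_2)=\emptyset$ or $B(u_1)=\emptyset$; so I verify condition type 2-i. Along each boundary component with $p$ as endpoint, $u_1=u\ge m$, hence $\liminf_p u_1\ge m=\limsup_p u_2$, and consequently $\liminf_p u_1+\ve>\limsup_p u_2$ for every $\ve>0$. With all hypotheses verified, Theorem~\ref{maxprinciple} yields $u_2\le u_1$ on $\Omega$, i.e.\ $u\ge m$, as desired.

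The only subtlety I anticipate is the degenerate case where $(\cup_i D_i)\cup(\cup_i C_i\setminus\overline{A(u)})$ is empty, so that the formal definition of $u_2\le u_1$ on $\partial_\infty\Omega$ used in the paper cannot be invoked. In that case both $u$ and $m$ solve a pure Jenkins--Serrin problem over $\Omega$, and Theorem~\ref{th3} forces $u-m$ to be constant; the boundary condition then forces this constant to be non-negative, recovering the conclusion.
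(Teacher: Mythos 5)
Your proposal is correct and is exactly the argument the paper intends: the corollary is presented as an immediate consequence of Theorem~\ref{maxprinciple}, applied with $u_1=u$ and $u_2\equiv m$. Your verification of the hypotheses (that the hypothesis forces $B(u)=\emptyset$, that the constant function is an admissible solution with $A(u_2)=B(u_2)=\emptyset$, and that every type~2 ideal vertex satisfies condition 2-i since $\limsup_p u_2=m\le\liminf_p u_1$) simply fills in the details the paper leaves implicit.
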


\begin{corollary}
Let $\Omega\subset\H^2$ be an admissible domain and $u_1$ and $u_2$ be two
admissible solutions. We assume that $u_1= u_2$ on $\partial_\infty\Omega$.
Besides we assume that the behaviour near each ideal vertex
$p\in\partial_\infty\H^2$ is one of the following.
\begin{enumerate}
\item[\textbf{type 1}] $\Omega$ has necks near $p$;
\item[\textbf{type 2-i}] we have $\lim_p u_1=\lim_p u_2$ exists and is
finite along both boundary components with $p$ as endpoint;
\item[\textbf{type 2-ii}] if $A\subset A(u_1)(= A(u_2))$ (resp. $B\subset
B(u_1)(=
B(u_2))$) is a geodesic arc with $p$ as endpoint and $\Gamma$ is the
other boundary arc with endpoint $p$ that bounds $\Omega$ near $p$, we
have $\lim_p u_1=\lim_p u_2$  exists and is finite along $\Gamma$ and .
\end{enumerate}
Then we have $u_1= u_2$ in $\Omega$.
\end{corollary}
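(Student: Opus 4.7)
The plan is to deduce this uniqueness statement as a direct double application of the general maximum principle (Theorem~\ref{maxprinciple}). The only real work is in checking that the hypotheses of that theorem are fulfilled when one swaps the roles of $u_1$ and $u_2$.

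First I would record the boundary consequences of the hypothesis $u_1=u_2$ on $\partial_\infty\Omega$: interpreted in the admissible-solution sense (as explained before Theorem~\ref{maxprinciple}), this forces the equalities $A(u_1)=A(u_2)$ and $B(u_1)=B(u_2)$, and also forces $u_1=u_2$ pointwise on $(\cup_i D_i)\cup\bigl(\cup_i C_i\setminus \overline{A(u_1)\cup B(u_1)}\bigr)$. In particular the weak inequalities $u_1\le u_2$ and $u_2\le u_1$ both hold on $\partial_\infty\Omega$.

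Next I would verify the ideal-vertex hypotheses of Theorem~\ref{maxprinciple} for the pair $(u_1,u_2)$. At a type 1 vertex there is nothing to check. At a type 2-i vertex, the assumption that $\lim_p u_1$ and $\lim_p u_2$ exist, coincide, and are finite along each of the two boundary arcs gives
\[
\liminf_p u_1 \;=\; \lim_p u_1 \;=\; \lim_p u_2 \;=\; \limsup_p u_2,
\]
so $\liminf_p u_1+\ve>\limsup_p u_2$ for every $\ve>0$. At a type 2-ii vertex, the inclusion $A\subset A(u_2)\subset A(u_1)$ (respectively $B\subset B(u_1)\subset B(u_2)$) required by Theorem~\ref{maxprinciple} is immediate from $A(u_1)=A(u_2)$ and $B(u_1)=B(u_2)$, and the required strict inequality along $\Gamma$ follows exactly as in the type 2-i case from the existence, equality and finiteness of $\lim_p u_1=\lim_p u_2$ along $\Gamma$. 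Theorem~\ref{maxprinciple} therefore applies and yields $u_2\le u_1$ in $\Omega$.

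Finally I would observe that the three conditions on ideal vertices listed in the corollary are completely symmetric in $u_1$ and $u_2$: type 1 involves only the geometry of $\Omega$, and type 2-i and type 2-ii involve only the equal limits $\lim_p u_1=\lim_p u_2$. Consequently exactly the same verification, with the roles exchanged, shows that Theorem~\ref{maxprinciple} applies to the pair $(u_2,u_1)$, giving $u_1\le u_2$ in $\Omega$. Combining the two inequalities yields $u_1=u_2$ in $\Omega$, which is the desired conclusion. No step here is a real obstacle; the only point requiring care is the bookkeeping of the asymmetric hypothesis of Theorem~\ref{maxprinciple} in the type 2-ii case, which is handled by the equalities $A(u_1)=A(u_2)$ and $B(u_1)=B(u_2)$ forced by $u_1=u_2$ on $\partial_\infty\Omega$.
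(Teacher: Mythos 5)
Your proposal is correct and is precisely the argument the paper intends: the corollary is stated as an immediate consequence of Theorem~\ref{maxprinciple}, obtained by applying it to the pair $(u_1,u_2)$ and then, using the symmetry of the hypotheses (which you verify correctly via $A(u_1)=A(u_2)$, $B(u_1)=B(u_2)$ and the finiteness of the common limits at type 2 vertices), to the pair $(u_2,u_1)$. The paper offers no further proof, so your write-up supplies exactly the bookkeeping that was left implicit.
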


%%%%%%%%%%%%%%
\subsubsection{A counterexample}\label{seccontreexemple}
%%%%%%%%%%%%%%

In this section, we construct a counterexample to a general maximum
principle. To be more precise we have the following result:
\begin{proposition}\label{contreexemple}
There is a continuous function on $\partial_\infty\H^2$ minus two points
that admits several minimal extensions to $\H^2$.
\end{proposition}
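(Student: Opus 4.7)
The plan is to construct, for suitably chosen $p_1,p_2\in\partial_\infty\H^2$, a continuous function $f:\partial_\infty\H^2\setminus\{p_1,p_2\}\to\R$ and two distinct minimal graphs $u_1,u_2:\H^2\to\R$ which both assume the value $f$ at the ideal boundary away from $p_1,p_2$. The key is to exploit the fact that, at the ideal vertices $p_1,p_2$, none of the conditions (type~1, type~2-i, type~2-ii) of the maximum principle of Theorem~\ref{maxprinciple} can be forced by the asymptotic data alone, so uniqueness may fail. Concretely, I take $f$ with different constant limits from $D_1$ and $D_2$ at each $p_i$; for instance $f\equiv 0$ on $D_1$ and $f\equiv M$ on $D_2$ with $M>0$. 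Since $f$ is discontinuous at $p_1$ and $p_2$, there is no natural value to assign there, which gives the room to build two extensions differing in their interior behaviour near $p_1,p_2$.

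The first extension $u_1$ is the ``symmetric'' one: construct it by exhausting $\H^2$ with bounded Scherk domains $\Omega_n$, solving the Dirichlet problem on each $\Omega_n$ via Theorem~\ref{th2} with boundary data equal to $f$ on the asymptotic arcs and a chosen continuous interpolation (say a smooth path between the values $0$ and $M$) on auxiliary convex boundary arcs placed close to $p_1$ and $p_2$. The Compactness Theorem~\ref{th:compactness}, combined with Lemma~\ref{lem:barrier} as barriers along $D_1\cup D_2$, yields a limit minimal graph $u_1:\H^2\to\R$ with the prescribed asymptotic values. The second extension $u_2$ is the ``asymmetric'' one: we perform the analogous construction but replace the interpolating convex arcs near $p_i$ by short geodesic arcs labelled $+\infty$ and $-\infty$ as in Jenkins--Serrin, choosing them small enough (measured against suitable horocycles at $p_i$) that the inequalities~(\ref{hipJS2}) of Theorem~\ref{th2} hold. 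For each $n$ this produces a minimal graph $v_n$ on the corresponding domain; Proposition~\ref{prop:conv} gives, after extracting a subsequence and translating vertically if necessary, a limit minimal graph $u_2:\H^2\to\R$ whose asymptotic boundary values on $D_1\cup D_2$ coincide with $f$ (verified again using the Sa~Earp--Abresch barriers of Lemma~\ref{lem:barrier}).

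The final step is to show $u_1\neq u_2$. This follows because in the approximating sequence $v_n$ the values near $p_i$ are forced by the $\pm\infty$ labels on the auxiliary geodesic edges: $v_n$ is pushed up near $p_1$ and down near $p_2$, and this asymmetry is preserved when passing to the limit (it cannot be cancelled by a global vertical translation because the boundary data on $D_1\cup D_2$ is fixed). In contrast the symmetric construction yields $u_1$ with ``balanced'' interior limits at $p_1,p_2$, so $u_1$ and $u_2$ differ at some interior point. The main obstacle is step~2: one must verify that the sequence $\{v_n\}$ does not develop divergence lines in the interior of $\H^2$ that would prevent the limit from being a bona-fide minimal graph defined on all of $\H^2$. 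This is where Proposition~\ref{prop:lindiv} is crucial: the only candidate endpoints for divergence lines are the vertices of $\Omega_n$, and a careful choice of the positions and lengths of the auxiliary $\pm\infty$-arcs (together with (\ref{hipJS2}) for every inscribed polygonal sub-domain) is needed to exclude any limiting divergence line, so that the convergence takes place uniformly on compact subsets of the whole of $\H^2$.
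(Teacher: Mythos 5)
Your approach has a fatal gap: with the boundary data you propose ($f\equiv 0$ on $D_1$ and $f\equiv M$ on $D_2$, so that $f$ has finite one-sided limits at each puncture $p_i$), any two minimal extensions $u_1,u_2$ of $f$ are admissible solutions on the admissible domain $\Omega=\H^2$ whose two ideal vertices $p_1,p_2$ are of type 2 (the adjacent boundary arcs lie on $\partial_\infty\H^2$, i.e.\ on $\{\theta=0\}$ and $\{\theta=\pi\}$ in polar coordinates centered at $p_i$), and they satisfy the type 2-i hypothesis of Theorem~\ref{maxprinciple} at both vertices: along each $D_j$ one has $\liminf_{p_i}u_1=\limsup_{p_i}u_2$ equal to the common finite limit of $f$ along that arc, so $\liminf_{p_i}u_1+\ve>\limsup_{p_i}u_2$ for every $\ve>0$. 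Applying Theorem~\ref{maxprinciple} in both directions gives $u_1=u_2$. In other words, the paper's own general maximum principle rules out non-uniqueness for exactly the kind of data you choose; a jump discontinuity of $f$ at the punctures is not enough to escape it. Relatedly, your construction of the ``asymmetric'' extension via auxiliary $\pm\infty$ geodesic edges shrinking to $p_i$ either changes the domain and the boundary data, or in the limit produces another extension of the same $f$, which the maximum principle then identifies with the ``symmetric'' one; your claim that ``the asymmetry is preserved in the limit'' is precisely the point that needs (and, in your setup, cannot receive) a proof.

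The paper's construction is built to defeat the maximum principle: the boundary function $f$ is made to oscillate unboundedly near the puncture, $f=(-1)^k(m_k+(k+1))$ on a sequence of arcs $I_k$ accumulating at the ideal point $o$, so that along the boundary the inferior and superior limits at $o$ are $-\infty$ and $+\infty$ and none of the hypotheses (type 1, 2-i, 2-ii) of Theorem~\ref{maxprinciple} holds there. The two extensions $u$ and $u'$ are then obtained as monotone limits of solutions on an exhaustion with $+\infty$ resp.\ $-\infty$ imposed on the far geodesic $(p_n,q_n)$ (this much is in the spirit of your step two), but the proof that $u\neq u'$ is a quantitative flux computation: one compares the approximating solutions with the periodic Scherk-type graphs $w_{\pm1}$ of Lemma~\ref{defwlambda}, invokes the flux estimate of Lemma~\ref{defmk}, and chooses the constants $m_k$, the horocycles and the $\ve_k$ so that $\int_{[q_0^0,p_0^0]}\langle X_{u'}-X_{u},\nu\rangle>0$. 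Without an unbounded oscillation in $f$ and without such a flux estimate, the argument cannot be completed.
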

We remark that any such function admits a minimal extension to $\H^2$ by
Theorem~\ref{th3}. The idea to construct several extensions comes from
Collin's construction in \cite{col2}.

In the following, we shall work in the disk model for $\H^2$. Let us fix
$\alpha$ in $(\pi/4,\pi/2)$, we denote $z_\alpha=e^{i\alpha}$ the points
in $\partial_\infty\H^2$. Let us
consider the ideal rectangle $R_\alpha$ with the
points $z_\alpha,-\overline{z_\alpha},-z_\alpha$ and $\overline{z_\alpha}$
as vertices. This domain is symmetric with respect to the geodesics
$\{\re z=0\}$ and $\{\im z=0\}$. We can extend the domain $R_\alpha$
by
reflection along the "vertical" geodesics $(z_\alpha,
\overline{z_\alpha})$ and $(-\overline{z_\alpha},-z_\alpha)$ and their
images by these reflections. We obtain a domain $\Delta_\alpha$ which is
invariant under the translation $t$ along the geodesic $\{\im z=0\}$
defined
by $t(-\overline{z_\alpha})=z_\alpha$. We then denote by $p_0$ the
point $-z_\alpha$ and by $q_0$ the point $-\overline{z_\alpha}$; for
$n\in \Z$, we define $p_n$ and $q_n$ by $p_n=t^n(p_0)$ and $q_n=t^n(q_0)$
(see Figure~\ref{dessin4}).

We have a first lemma.
\begin{lemma}\label{defwlambda}
There exists a family of minimal graph $w_\lambda$ over
$\Delta_\alpha$ such that
\begin{itemize}
\item $w_\lambda$ takes on the geodesics $(p_k,p_{k+1})$ and
$(q_k,q_{k+1})$ the value $+\infty$ if $k$ is even and $-\infty$ is $k$ is
odd,
\item $w_\lambda=k\lambda$ on the geodesic $(p_k,q_k)$,
\item the graph of $w_\lambda$ is invariant by the translation of
$\H^2\times\R$ defined by $(p,z)\mapsto (t^2(p),z+2\lambda)$.
\end{itemize}
\end{lemma}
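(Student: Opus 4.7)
The plan is to construct $w_\lambda$ as a limit of Jenkins--Serrin solutions on finite truncations of $\Delta_\alpha$, in the spirit of the proof of Theorem~\ref{th2}, and then extract the periodicity from the uniqueness given by Theorem~\ref{maxprinciple}.

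For each $N\in\N$, let $\Delta_\alpha^N\subset\Delta_\alpha$ be the truncated ideal Scherk subdomain consisting of the $2N$ adjacent tiles with index $-N\le k\le N-1$; its boundary in $\H^2$ is made up of the top/bottom geodesic arcs $(p_k,p_{k+1})$ and $(q_k,q_{k+1})$ plus the two extremal vertical geodesics $(p_{\pm N},q_{\pm N})$. I would label the top/bottom arcs by $\pm\infty$ according to the parity of $k$ (marking even-indexed arcs as $A$-edges and odd-indexed ones as $B$-edges) and prescribe the constant continuous data $\pm N\lambda$ on the extremal verticals. Applying Theorem~\ref{th2} then produces a minimal graph $w_\lambda^N$ on $\Delta_\alpha^N$ with these boundary values. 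The polygon inequalities $2\alpha<\gamma$ and $2\beta<\gamma$ reduce, after a uniform (in $N$) choice of small horocycles at the ideal vertices $p_k,q_k$, to the geometric fact that inside a single tile $R_\alpha$ the length between horocycles of a top (or bottom) edge is strictly less than that of a side edge. This is exactly where the hypothesis $\alpha\in(\pi/4,\pi/2)$ enters; the $t$-invariance of the tiling makes the same horocycle choice valid for every $N$ simultaneously.

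Next I would pass to the limit. By Proposition~\ref{prop:conv}, a subsequence of $\{w_\lambda^N\}$ has divergence set ${\cal D}$ consisting of pairwise disjoint geodesics and converges on compact subsets of each component of $\Delta_\alpha\setminus{\cal D}$. Proposition~\ref{prop:lindiv} then rules out divergence lines: they cannot accumulate at interior points of top/bottom arcs (where $w_\lambda^N\equiv\pm\infty$ and Lemma~\ref{lem:barrier}-type barriers give uniform control in a neighborhood), and for $N>|k|$ they cannot cross an interior point of $(p_k,q_k)$, where $w_\lambda^N=k\lambda$. Hence ${\cal D}=\emptyset$ and $\{w_\lambda^N\}$ converges on compact subsets of $\Delta_\alpha$ to a minimal graph $w_\lambda$; standard barrier arguments with $h_{\theta_0}$ from Lemma~\ref{lem:barrier} then verify that $w_\lambda$ takes the prescribed $\pm\infty$ and $k\lambda$ boundary values.

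Finally I would deduce the translational symmetry by uniqueness. Define $\tilde w(p):=w_\lambda(t^2(p))-2\lambda$ on $\Delta_\alpha$ (which is $t^2$-invariant). A direct check shows $\tilde w$ has the same boundary data as $w_\lambda$: the labels $+\infty$ and $-\infty$ are preserved because $t^2$ preserves the parity of the tile index, and on $(p_k,q_k)$ one has $\tilde w=(k+2)\lambda-2\lambda=k\lambda$. By the uniqueness corollary of Theorem~\ref{maxprinciple}, $\tilde w\equiv w_\lambda$, which is exactly the invariance claim. The most delicate points I expect are (i) the uniform-in-$N$ verification of all polygon inequalities, which requires a combinatorial check over polygons built from tiles as well as from diagonal geodesics between the $p_k,q_k$, and (ii) the construction of barriers tailored to the ``mixed'' ideal vertices $p_k,q_k$, where an $A$- or $B$-edge meets a $C$-edge at $\partial_\infty\H^2$; both rely on suitably rescaled versions of the minimal graphs $h_{\theta_0}$ of Lemma~\ref{lem:barrier}.
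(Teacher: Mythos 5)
Your strategy has genuine gaps at its two crucial steps, and both are avoided by the paper's much shorter argument. First, in the limit process you assert that $w_\lambda^N=k\lambda$ on the interior geodesics $(p_k,q_k)$ for $|k|<N$, and you use this both to exclude divergence lines via Proposition~\ref{prop:lindiv} and to identify the boundary behaviour of the limit. But $(p_k,q_k)$ is an \emph{interior} curve of $\Delta_\alpha^N$ for $|k|<N$; the only places where you prescribed the data $\pm N\lambda$ are the two extremal verticals, and nothing in your construction forces the solution on the $2N$-tile domain to interpolate linearly on the intermediate verticals (the obvious reflection symmetry of $\Delta_\alpha^N$ about $(p_0,q_0)$ only pins down the value $0$ there). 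Without that, Proposition~\ref{prop:lindiv} does not apply along the $(p_k,q_k)$, divergence lines joining ideal vertices $p_j,q_k$ are not excluded except through the full flux argument (which brings back the uniform-in-$N$ Jenkins--Serrin inequalities for \emph{all} inscribed polygons of $\Delta_\alpha^N$, a combinatorial verification you flag but do not carry out and which the paper itself never performs for multi-tile domains), and even if the limit exists its values on the $(p_k,q_k)$ remain uncontrolled. Second, your periodicity step invokes ``the uniqueness corollary of Theorem~\ref{maxprinciple}'' on $\Delta_\alpha$. That theorem and its corollary are stated for admissible domains with \emph{finitely} many boundary arcs and ideal vertices, and Theorem~\ref{th3} likewise requires finitely many edges; $\Delta_\alpha$ has infinitely many ideal vertices, so neither applies. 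This is not a technicality: Proposition~\ref{contreexemple} is built precisely on such infinite-tile domains and shows that uniqueness can fail there, so the periodicity cannot be obtained as a consequence of a maximum principle on $\Delta_\alpha$.

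The paper's proof sidesteps all of this: it solves the Dirichlet problem of Theorem~\ref{th2} on the \emph{single} ideal rectangle $R_\alpha$ (four ideal vertices, so the Jenkins--Serrin inequalities are an elementary check using $\alpha\in(\pi/4,\pi/2)$), with data $+\infty$ on $(p_0,p_1)$ and $(q_0,q_1)$, $0$ on $(p_0,q_0)$ and $\lambda$ on $(p_1,q_1)$, and then extends to all of $\Delta_\alpha$ by Schwarz reflection about the horizontal geodesics lying over $(p_k,q_k)$. The reflection automatically produces the alternating $\pm\infty$ data, the values $k\lambda$ on the verticals, and the invariance under $(p,z)\mapsto(t^2(p),z+2\lambda)$, since $t^2$ is the composition of two such reflections. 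If you want to salvage your approach, you would essentially have to import this reflection idea anyway to control the interior verticals, at which point the limiting construction becomes superfluous.
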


\begin{proof}
Since $\alpha\in(\pi/4,\pi/2)$, the rectangle $R_\alpha$ satisfies the
hypotheses of Theorem~\ref{th2}. So, for every $\lambda\in\R$, we can
construct a minimal graph $w_\lambda$ on $R_\alpha$ with boundary data
$+\infty$ on $(p_0,p_1)$ and $(q_0,q_1)$, $0$ on $(p_0,q_0)$ and $\lambda$
on $(p_1,q_1)$. Since $w_\lambda$ is constant on $(p_0,q_0)$ and
$(p_1,q_1)$, we can extend the definition of $w_\lambda$ to
$\Delta_\alpha$ by Schwartz reflection. The properties of $w_\lambda$ are
deduced easily from its contruction.
\end{proof}

Let $H$ be a horocycle at a vertex $p_n$ of $\Delta_\alpha$, we then
define $p_n^-=H\cap (p_{n-1},p_n)$ and $p_n^+=H\cap (p_n,p_{n+1})$; in the
same way we define $q_n^-$ and $q_n^+$.

Let $D_\alpha$ be the domain bounded by the geodesics $(p_0,q_0)$ and
$(p_1,q_1)$ and the arcs in $\partial_\infty\H^2$ joining $p_0$ to $p_1$
and $q_0$ to $q_1$. We have a second lemma.
\begin{lemma}\label{defmk}
Let us consider at each vertex of $R_\alpha$, $p_0,p_1,q_0$ and $q_1$,
a horocycle (they are assumed to be disjoint). Let us fix $\ve>0$.
Then there exist $m>0$ and $\beta\in(\alpha,\pi/2)$ such that the
following is true. Let $u$ be a minimal graph over $D_\alpha$ which is
continuous up to $\partial_\infty D_\alpha$ minus the four vertices with:
\begin{itemize}
\item $u=m$ on the boundary subarcs of $\partial_\infty\H^2$ joining
$e^{i\beta}$ to $-e^{-i\beta}$
and $-e^{i\beta}$ to $e^{-i\beta}$,
\item $u\le m$ on $\partial_\infty D_\alpha$,
\item $u\le 0$ on $(p_0,q_0)$ and $(p_1,q_1)$.
\end{itemize}
Then:
\begin{align*}
\int_{[p_0^+,p_1^-]}\langle X_u, \nu\rangle &\ge \ell([p_0^+,p_1^-])-\ve &
\int_{[q_0^+,q_1^-]}\langle X_u, \nu\rangle &\ge \ell([q_0^+,q_1^-])-\ve
\end{align*}
with $\nu$ the outgoing normal from $R_\alpha$ and $[p_0^+,p_1^-]$ denotes
the segment in the geodesic $(p_0,p_1)$ joining $p_0^+$ to $p_1^-$.
\end{lemma}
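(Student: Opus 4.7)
Proof plan. The approach is by contradiction, using the compactness theory of Proposition~\ref{prop:conv}. Fix $\varepsilon > 0$ and suppose the conclusion fails. Then for some fixed $\beta_0 \in (\alpha, \pi/2)$ there is a sequence $m_k \to +\infty$ and minimal graphs $u_k$ on $D_\alpha$ satisfying the three boundary hypotheses with $\beta = \beta_0$ and $m = m_k$, but with $F_{u_k}([p_0^+,p_1^-]) < \ell([p_0^+, p_1^-]) - \varepsilon$ for every $k$. The first step is to normalize by $\tilde u_k := u_k - m_k$, which gives $\tilde u_k \le 0$ globally, $\tilde u_k \equiv 0$ on the two fixed middle arcs of $\partial_\infty D_\alpha$, and $\tilde u_k \le -m_k \to -\infty$ on the side geodesics $(p_0, q_0) \cup (p_1, q_1)$.

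Next, I extract via Proposition~\ref{prop:conv} a subsequence whose divergence set $\mathcal{D}$ is a finite disjoint union of divergence lines. Proposition~\ref{prop:lindiv}(1), applied on the middle arcs where $\tilde u_k \equiv 0$, forbids a divergence line from terminating in their interior. A barrier argument using $h_{\theta_0}$ from Lemma~\ref{lem:barrier} near the two ``corner'' pieces of each arc at infinity (where $\tilde u_k$ is uniformly bounded above) and Proposition~\ref{prop:lindiv}(2) near the side geodesics (where $|\tilde u_k|$ is comparable to $m_k$) likewise rule out divergence lines ending at any other boundary point. Every divergence line therefore joins two of the four ideal vertices $p_0, q_0, p_1, q_1$, and is one of the chords $(p_0, p_1)$, $(q_0, q_1)$, $(p_0, q_1)$, $(p_1, q_0)$.

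The case $\mathcal{B} = D_\alpha$ is then excluded by a Jenkins--Serrin flux obstruction: the limit would be a minimal graph on $D_\alpha$ with value $-\infty$ on both side geodesics and continuous values $\le 0$ on the arcs at infinity, but the necessary inequality $2\beta_1 < \gamma_1$ of Theorem~\ref{th2} applied with $\mathcal{P} = D_\alpha$ would fail, since the $D_i$ arcs at $\partial_\infty\H^2$ contribute nothing to $\gamma_1$ and so $\gamma_1 = \beta_1$. Hence $\mathcal{D} \neq \emptyset$, and a further flux-balance on each component of $\mathcal{B}$, using Remark~\ref{rem:plantas} together with the sign constraint $\tilde u_k \le 0$, excludes the diagonal chords and pins down $\mathcal{D} = (p_0, p_1) \cup (q_0, q_1)$. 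Finally, a barrier estimate near the middle bottom arc (where $\tilde u_k \equiv 0$) shows that $\{\tilde u_k|_L\}$ is locally uniformly bounded on the lower cap $L$, so up to subsequence it converges to a finite minimal graph $\tilde u_\infty^L \le 0$; Remark~\ref{rem:plantas} combined with $\tilde u_k \le 0$ globally then forces the divergence on the $R_\alpha$-side of $(p_0, p_1)$ to be toward $-\infty$, and Lemma~\ref{lem:div} yields $X_{u_k} \to \nu$ along $(p_0, p_1)$.

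Consequently $F_{u_k}([p_0^+, p_1^-]) \to \ell([p_0^+, p_1^-])$, contradicting the standing inequality; the symmetric argument via the upper cap $U$ and the chord $(q_0, q_1)$ gives the bound across $[q_0^+, q_1^-]$. The main obstacle is this sign determination: \emph{a priori} $X_{u_k}$ could converge to $-\nu$ along the divergence line, giving flux $-\ell$, which would \emph{not} contradict the hypothesis. Ruling out the wrong sign requires carefully combining the three ingredients available on each cap --- the global upper bound $\tilde u_k \le 0$, the vanishing boundary condition on the fixed middle arc, and the blow-up to $-\infty$ on the side geodesics --- so that Remark~\ref{rem:plantas} produces the direction needed for Lemma~\ref{lem:div}.
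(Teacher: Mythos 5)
Your very first move---freezing $\beta=\beta_0$ and letting only $m_k\to+\infty$---is where the argument breaks, and not just technically: the fixed-$\beta_0$ statement you are trying to establish by contradiction is actually \emph{false} when $\ve$ is small compared to $\beta_0-\alpha$. To see this, take $u_M$ with boundary data $m$ on the two middle arcs and $-M$ on the corner arcs and on the side geodesics. The translates $u_M+M$ increase, are bounded above (maximum principle against the Scherk-type graph on $D_\alpha$ minus the two caps cut off by the $\beta_0$-geodesics, with value $+\infty$ on those geodesics and $0$ elsewhere), and their divergence set is exactly the two caps between the $\beta_0$-geodesics and the middle arcs. Since $[p_0^+,p_1^-]$ lies in the convergence set, $F_{u_M}([p_0^+,p_1^-])$ converges to the flux of a \emph{finite} limit graph across a compact interior arc, which by Lemma~\ref{lem:flux}$(ii)$ is strictly less than $\ell([p_0^+,p_1^-])$; this defect does not shrink as $m$ grows. (In this example the divergence lines are the $\beta_0$-geodesics, which terminate at the endpoints of the middle arcs---points not excluded by Proposition~\ref{prop:lindiv}---so your classification ``${\cal D}=(p_0,p_1)\cup(q_0,q_1)$'' fails, and the flux across $[p_0^+,p_1^-]$ picks up the wrong value.) This is also why the two steps you could not close cannot be closed: the lower barrier built from the middle arc (where $\tilde u_k\equiv 0$) only controls $\tilde u_k$ on the sub-cap \emph{inside} the $\beta_0$-geodesic, so ``$\tilde u_k|_L$ locally uniformly bounded on the lower cap $L$'' is unavailable precisely near $(p_0,p_1)$, and the sign of $X_{u_k}$ along $(p_0,p_1)$ genuinely can come out wrong. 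Secondary gaps of the same kind: on the corner arcs you have only an upper bound ($\tilde u_k\le 0$), not a two-sided sandwich, and on the side geodesics Proposition~\ref{prop:lindiv}$(2)$ requires $u_n|_T$ to be the \emph{constant} $\pm M_n$ with $|u_n|\le M_n$, which your data do not satisfy; so finiteness of the divergence set is not established and Proposition~\ref{prop:conv} cannot be invoked as stated.

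The fix is to use the full strength of the negation: the lemma asserts the existence of a good pair $(m,\beta)$, so its failure provides a counterexample $u_n$ for \emph{every} choice, in particular for $m=n$ and $\beta_n=\alpha+1/n$. This is what the paper does, and it makes the whole divergence-line machinery unnecessary. One sandwiches $u_n$ between two explicit barriers: from above, $u_n\le w_0$ on $R_\alpha$ (the fixed Scherk graph of Lemma~\ref{defwlambda}, via the maximum principle, using $u_n\le 0$ on the side geodesics); from below, on the cap bounded by the $\beta_n$-geodesic and the middle arc, $u_n\ge h_n$ where $h_n$ is the Sa Earp graph of Lemma~\ref{lem:barrier} with value $n$ on the arc and $-\infty$ on the geodesic. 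Because $\beta_n\to\alpha$, these caps exhaust the full cap between $(p_0,p_1)$ and the bottom ideal arc, and $h_n\to+\infty$ there; hence $u_n\to+\infty$ on compact sets of that cap while remaining bounded above on $R_\alpha$, and Lemma~\ref{lem:limitflux} gives $F_{u_n}([p_0^+,p_1^-])\to\ell([p_0^+,p_1^-])$ directly, contradicting the assumed flux deficit. You should restructure your proof along these lines; the compactness/divergence-line route cannot succeed with $\beta$ fixed.
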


\begin{proof}
If the lemma is false, for every $n\in\N$, there is a minimal graph $u_n$
on $D_\alpha$ continuous up to $\partial_\infty D_\alpha$ minus
the four vertices with:
\begin{itemize}
\item $u_n=n$ on the boundary arcs joining $e^{i\beta_n}$ to
$-e^{-i\beta_n}$ and $-e^{i\beta_n}$ to $e^{-i\beta_n}$ where
$\beta_n=\alpha+1/n$,
\item $u\le n$ on $\partial_\infty D_\alpha$,
\item $u\le 0$ on $(p_0,q_0)$ and $(p_1,q_1)$,
\item $\displaystyle\int_{[p_0^+,p_1^-]}\langle X_{u_n}, \nu \rangle\le
\ell([p_0^+,p_1^-])-\ve$ or $\displaystyle
\int_{[q_0^+,q_1^-]}\langle X_u, \nu\rangle \le \ell([q_0^+,q_1^-])-\ve$.
\end{itemize} 
We recall that $w_0$ is defined over $R_\alpha$ with $w_0=0$ on
$(p_0,q_0)$ and $(p_1,q_1)$ and $w_0=+\infty$ on $(p_0,p_1)$ and
$(q_0,q_1)$. Thus by the maximum principle (Theorem~\ref{maxprinciple}),
for every $n\in\N$, $u_n\le w_0$:
the sequence $u_n$ is bounded above on $R_\alpha$. Let $h_n$ be the
minimal graph over the domain in $D_\alpha\backslash R_\alpha$ bounded by
the geodesic $(-e^{i\beta_n},e^{-i\beta_n})$ and the arc in
$\partial_\infty\H^2$ joining $-e^{i\beta_n}$ to $e^{-i\beta_n}$ with
boundary value $-\infty$ on the geodesic and $n$ on the subarc of
$\partial_\infty\H^2$. By the maximum principle, for every $n\in\N$,
$u_n\ge
h_n$. Since $\beta_n\rightarrow\alpha$, $u_n\rightarrow+\infty$ on the
domain bounded by the geodesic $(p_0,p_1)$ and the arc in
$\partial_\infty\H^2$ joining $p_0$ to $p_1$. This implies that:
$$
\int_{[p_0^+,p_1^-]}\langle X_{u_n}, \nu\rangle \longrightarrow
\ell([p_0^+,p_1^-])
$$
In the same way we prove that:
$$
\int_{[q_0^+,q_1^-]}\langle X_{u_n}, \nu\rangle \longrightarrow
\ell([q_0^+,q_1^-])
$$
This a contradiction and the lemma is proved.
\end{proof}

We can now prove Proposition~\ref{contreexemple}.

\begin{proof}
For every $n\in\N$, we denote by $\Omega_n$ the domain bounded by the
geodesic $(p_0,q_0)$ and $(p_n,q_n)$ and the arcs in $\partial_\infty\H^2$
joining $p_0$ to $p_n$ and $q_0$ to $q_n$, finally we define
$\Omega_\infty=\cup_n\Omega_n$ ($\Omega_\infty$ is a half-plane). Let $o$
be the endpoint of the geodesic $\{y=0\}$ in the ideal boundary
of $\Omega_\infty$. In the following we define a continuous function $f$ on
$\partial_\infty\Omega_\infty\backslash\{o\}$ which admits two
minimal extensions in $\Omega_\infty$; we shall have $f=0$ on $(p_0,q_0)$
thus, by Schwartz reflection, the definition will extend to $\H^2$ and the
proposition will be proved.

For every $n\in \N$, we choose $H(p_n)$ a horocycle centered at $p_n$. By
symmetry with respect to the geodesic $\{y=0\}$ we define $H(q_n)$ a
horocycle centered at $q_n$. Let $p_n^0$ and $q_n^0$ be the intersections
of
the geodesic $(p_n,q_n)$ with $H(p_n)$ and $H(q_n)$. We also define
$h(p_n)$(resp. $h(q_n)$) as the arc of $H(p_n)$ (resp. $H(q_n)$) between
$p_n^-$ and $p_n^+$ (resp. $q_n^-$ and $q_n^+)$ (see Figure~\ref{dessin4}).

\begin{figure}[h]
\begin{center}
\resizebox{0.7\linewidth}{!}{\input{dessin4.pstex_t}}
\caption{}\label{dessin4}
\end{center}
\end{figure}

Let us consider $w=w_1$ and $w'=w_{-1}$ where $w_{\pm1}$ are defined by
Lemma~\ref{defwlambda}. On $\Omega_\infty\cap D_\alpha$, $w\ge w'$ and
$w=0=w'$ on $(p_0,q_0)$, thus $X_{w'}-X_{w}$ points out of $\Omega_\infty$.
This implies that we can choose suitable $H(p_k)$ and
a positive sequence $(\ve_k)_{k\in\N}$ such that:
$$
0<\sum_{k\ge 0} \ve_k +\sum_{k\ge 0} \ell(h(p_k))+\sum_{k\ge 0}
\ell(h(q_k))< \frac{1}{5}\int_{[p_0^0,q_0^0]} \langle(X_{w'}-X_w),
\nu\rangle=\ve
$$
with $\nu$ the out-going normal from $\Omega_\infty$.

For every $k$, Lemma~\ref{defmk} associates to $\ve_k$ and $H(p_k),
H(p_{k+1}), H(q_k)$ and $H(q_{k+1})$ two real numbers $m_k>0$ and
$\beta_k\in(\alpha,\pi/2)$. Let $I_k$ be the image by $t^k$ of the arcs in
$\partial_\infty D_\alpha$ joining $e^{i\beta_k}$ to $-e^{-i\beta_k}$
and $-e^{i\beta_k}$ to $e^{-i\beta_k}$ and $J_k$ the image by $t^k$ of the
others arcs in $\partial_\infty D_\alpha\cap\partial_\infty\H^2$.

Let us define on $\partial_\infty\Omega_\infty\backslash\{o\}$ a continuous
function $f$ which satisfies
\begin{itemize}
\item $f=(-1)^k(m_k+(k+1))$ on $I_k$,
\item $|f|\le m_k+(k+1)$ on $J_k$,
\item $f=0$ on $(p_0,q_0)$.
\end{itemize}

For every $n\in\N$, we define on $\Omega_n$ the minimal graph $u_n$ and
$u_n'$ with boundary value $u_n=u_n'=f$ on $\partial_\infty \Omega_\infty
\cap \partial_\infty \Omega_n$ and $u_n=+\infty$ and $u_n'=-\infty$ on
$(p_n,q_n)$, these minimal graphs exist because of Theorem \ref{th2}. By
the maximum principle (Theorem~\ref{maxprinciple}), we have
$u_n\ge u_n'$ and
$\{u_n\}$ (resp. $\{u_n'\}$) is a decreasing sequence (resp. increasing
sequence). Hence they converge to minimal graphs $u$ and $u'$ on
$\Omega_\infty$ with $f$ as boundary value. Let us prove that $u\neq u'$.

To do this, let us introduce some comparison functions; first we need some
new domains : for every $n>0$ we define 
\begin{align*}
B_n&=\left(\bigcup_{0\le 2k+1\le
n}t^{2k+1}(\overline{R_\alpha})\right)\cup\left(\bigcup_{0\le 2k\le
n}t^{2k}(\overline{D_\alpha})\right)\\
B_n'&=\left(\bigcup_{0\le 2k\le
n}t^{2k}(\overline{R_\alpha})\right)\cup\left(\bigcup_{0\le 2k+1\le
n}t^{2k+1}(\overline{D_\alpha})\right)
\end{align*}
On $B_n$, we define the minimal graph $v_n$ with boundary values $-\infty$
on $(p_k,p_{k+1})\cup(q_k,q_{k+1})$ if $k\le n$ and $k$ odd, $n+1$ on
$(p_{n+1},q_{n+1})$ and $f$ on the remainder of $\partial_\infty B_n$. On
$B_n'$, we define the minimal graph $v_n'$ with boundary value $+\infty$ on
$(p_k,p_{k+1})\cup(q_k,q_{k+1})$ if $k\le n$ and $k$ even, $-(n+1)$ on
$(p_{n+1},q_{n+1})$ and $f$ on the remainder of $\partial_\infty B_n'$. We
notice that these minimal graphs exist : Theorem~\ref{th2} can be applied
because of the existence of $w$.

On $\partial \Delta_\alpha\cap\overline{B_n}$, we have $v_n\le w$. Thus by
Theorem~\ref{maxprinciple}, $v_n\le w$ in $\Delta_\alpha\cap B_n$. Hence,
for every $0\le k\le n$, $v_n \le k$ on
$(p_k,q_k)$. Let us fix $k$ an even integer less than $n$; we have $v_n\le
k+1$  on $(p_k,q_k)\cup(p_{k+1},q_{k+1})$ and $v_n=f=m_k+(k+1)$ on $I_k$,
thus by Lemma~\ref{defmk} applied to $t^k(D_\alpha)$ we obtain:
\begin{align}
\int_{[p_k^+,p_{k+1}^-]}\langle X_{v_n},\nu\rangle & \ge
\ell([p_k^+,p_{k+1}^-])-\ve_k \label{eq1}\\
 \int_{[q_k^+,q_{k+1}^-]}\langle X_{v_n}, \nu\rangle & \ge
\ell([q_k^+,q_{k+1}^-])-\ve_k \label{eq2}
\end{align}
With $\nu$ the outgoing normal from $\Delta_\alpha$. When $k$ is odd,
we have 
\begin{align}\label{eq3}
\int_{[p_{k}^+,p_{k+1}^-]}\langle X_{v_n}, \nu\rangle & =
-\ell([p_{k}^+,p_{k+1}^-]) &
\int_{[q_k^+,q_{k+1}^-]}\langle X_{v_n}, \nu\rangle & =
-\ell([q_k^+,q_{k+1}^-])
\end{align}
Let $\Gamma_n$ be the closed curve in $\overline{B_n}$ composed of the
geodesic arcs $[p_0^0,q_0^0]$, $[p_k^+,p_{k+1}^-]$ for $0\le k\le n$,
$[p_{n+1}^0,q_{n+1}^0]$ and $[q_k^+,q_{k+1}^-]$ for $0\le k\le n$ and the
arcs of horocycles $h(p_k)\cap B_n$ and $h(q_k)\cap B_n$ for $0\le k\le
n+1$. By Stokes theorem $\int_{\Gamma_n}\langle (X_{v_n}-X_w),\nu
\rangle=0$ with $\nu$ the outgoing normal, so we have :
\begin{align*}
0&=\int_{\Gamma_n}\langle (X_{v_n}-X_w),\nu\rangle\\
&=\int_{[p_0^0,q_0^0]}\langle(X_{v_n}-X_w),\nu\rangle+
\int_{[p_{n+1}^0,q_{n+1}^0]} \langle(X_{v_n}-X_w),\nu\rangle \\
&\quad\quad+\sum_{k=0}^n\left(\int_{[p_k^+,p_{k+1}^-]}\langle (X_{v_n}
-X_w),\nu\rangle + \int_{[q_k^+,q_{k+1}^-]}\langle(X_{v_n}
-X_w),\nu\rangle\right)\\
&\quad\quad +\sum_{k=0}^{n+1}\left(\int_{h(p_k)\cap B_n}\langle (X_{v_n}
-X_w),\nu\rangle + \int_{h(q_k)\cap B_n}\langle (X_{v_n}
-X_w),\nu\rangle\right)\\
&\textrm{ since }X_{v_n}-X_w\textrm{ points out of }B_n\textrm{ along
}(p_{n+1},q_{n+1})\\
&\ge \int_{[p_0^0,q_0^0]}\langle(X_{v_n}-X_w),\nu\rangle\\
&\quad\quad+\sum_{\substack{k=0\\k
\textrm{ even}}}^n\left(\int_{[p_k^+,p_{k+1}^-]}\langle(X_{ v_n }
-X_w),\nu\rangle+ \int_{[q_k^+,q_{k+1}^-]}\langle(X_{v_n}
-X_w),\nu\rangle\right)\\
&\quad\quad+\sum_{\substack{k=0\\k
\textrm{ odd}}}^n\left(\int_{[p_k^+,p_{k+1}^-]}\langle(X_{ v_n }
-X_w),\nu\rangle+ \int_{[q_k^+,q_{k+1}^-]}\langle(X_{v_n}
-X_w),\nu\rangle\right)\\
&\quad\quad-\sum_{k=0}^{n+1}\left(2\ell(h(p_k))
+2\ell(h(q_k))\right)\\ 
&\textrm{because of \eqref{eq1},\eqref{eq2} and \eqref{eq3}}\\
&\ge \int_{[p_0^0,q_0^0]}\langle(X_{v_n}-X_w),\nu\rangle -
\sum_{\substack{k=0\\k
\textrm{ even}}}^n 2\ve_k -2\sum_{k=0}^{n+1}\left(\ell(h(p_k))
+\ell(h(q_k))\right)
\end{align*}
Thus since $X_{v_n}-X_w$ points out of $\Omega_n$ along $(p_0,q_0)$:
$$
0\le \int_{[p_0^0,q_0^0]}\langle (X_{v_n}-X_w),\nu\rangle \le 2\left(
\sum_{\substack{k=0\\k \textrm{ even}}}^n \ve_k
+\sum_{k=0}^{n+1}\ell(h(p_k))
+\ell(h(q_k))\right)\le 2\ve
$$

Now, on $\partial B_n$ we have $u_n\ge v_n$. So, by
Theorem~\ref{maxprinciple}, $u_n\ge v_n$ on $B_n$. This implies that
$X_{v_n}-X_{u_n}$ points out $B_n$ along $(p_0,q_0)$ and 
$$
\int_{[q_0^0,p_0^0]}\langle X_{u_n},\nu\rangle\le
\int_{[q_0^0,p_0^0]}\langle X_{v_n},\nu\rangle\le
\left(\int_{[q_0^0,p_0^0]}\langle X_{w},\nu\rangle\right)+2\ve
$$
Thus for the limit $u$, we have:
$$
\int_{[q_0^0,p_0^0]}\langle X_{u},\nu\rangle\le
\left(\int_{[q_0^0,p_0^0]}\langle X_{w},\nu\rangle \right)+2\ve
$$

Working with $u_n'$, $v_n'$ and $w'$ on $B_n'$ in the same way we prove
that :
$$
\int_{[q_0^0,p_0^0]}\langle X_{u'},\nu\rangle\ge
\left(\int_{[q_0^0,p_0^0]}\langle X_{w'},\nu\rangle \right)-2\ve
$$
Thus:
$$
\int_{[q_0^0,p_0^0]}\langle(X_{u'}-X_{u}),\nu\rangle \ge
\left(\int_{[q_0^0,p_0^0]}\langle (X_{w'}-X_{w}),\nu \rangle\right)-4\ve>0
$$
This implies that $X_u\neq X_{u'}$ on $[q_0^0,p_0^0]$ and $u\neq u'$ on
$\Omega_\infty$
\end{proof}

\appendix

%%%%%%%%%%%%%%%%%%%
%%%%%%%%%%%%%%%%%%%

\section{CMC graphs in $\H^2\times \R$ invariant under
translations} \label{appendix}

%%%%%%%%%%%%%%%%%%%
%%%%%%%%%%%%%%%%%%%

In this section, we give a description of constant mean curvature (cmc) $H$
surfaces which are invariant under translations along a
horizontal geodesic. 

Let us fix a geodesic $\Gamma$ in $\H^2$ and consider $(\phi,\theta)$
polar coordinates at an endpoint of $\Gamma$ such that $\Gamma=
\{\theta=\pi/2\}$. The translations along $\Gamma$ are given by
$\phi\mapsto\phi+constant$. 

Actually, we study cmc graphs which gives a local description of
translation
invariant surfaces; on such a graph, we choose the upward pointing normal.
Let $u$ be a function defined on $\Omega\subset\H^2$, the graph of $u$ has
constant mean curvature $H$ if $u$ satisfies 
\begin{equation}\label{cmc}
\mbox{div}\left(\frac{\nabla u}{W_u}\right)= \mbox{div}\left(X_u\right)=2H
\end{equation}
In the following we assume $H>0$ \textit{i.e.} the mean curvature vector is
upward pointing. Let $u$ be a cmc graph invariant by the translations along
$\Gamma$. Then $u$ can be written as $u(\phi,\theta)=f(\theta)$. We have
$\nabla u =\sin^2(\theta)f'(\theta)\frac{\partial}{\partial\theta}$. Let
$\theta_0,\theta_1\in(0,\pi)$  with $\theta_0<\theta_1$ and
$\phi_0,\phi_1\in\R$ with $\phi_0<\phi_1$. Using \eqref{cmc}, the
Divergence Theorem gives us:
$$
\int_{\partial([\phi_0,\phi_1]\times [\theta_0,\theta_1])} \langle
X_u,\nu\rangle=2H \mathrm{Area}\left([\phi_0,\phi_1]\times
[\theta_0,\theta_1]\right)
$$

Then
$$
\int_{\phi_0}^{\phi_1}\frac{f'(\theta_1)}{\sqrt{
1+\sin^2(\theta_1){f'(\theta_1)}^2}}d\phi-
\int_{\phi_0}^{\phi_1}\frac{f'(\theta_0)}{\sqrt{
1+\sin^2(\theta_0){f'(\theta_0)}^2}}d\phi =
2H\int_{\phi_0}^{\phi_1}\int_{\theta_0}^{\theta_1}\frac{1}{\sin^2(\theta)}
d\theta d\phi
$$

Thus $u$ is a cmc $H$ graph if and only if $f$ satisfies:
$$
\dfrac{d}{d\theta}\left(\frac{f'}{\sqrt{
1+\sin^2\theta\left|f'\right|^2}} \right)=\frac{2H}{\sin^2(\theta)}
$$
Hence $f'$ satisfies:
\begin{equation}\label{integre}
\frac{f'}{\sqrt{ 1+\sin^2\theta\left|f'\right|^2}}
=-2H\cot(\theta)+A
\end{equation}

We notice that changing $\theta$ by $\pi-\theta$ replaces $A$ by $-A$;
thus, in the following we assume $A\ge 0$. 

\underline{Case $H=0$} (Figure \ref{H=0}). We have $\dis
f'=\frac{A}{\sqrt{1-A^2\sin^2(\theta)}}$. Thus there are three subcases:
\begin{enumerate}
\item $A<1$. $f'$ and $f$ are defined on $(0,\pi)$, $u$ is an entire
graph. Moreover $f$ takes finite boundary value at $0$ and $\pi$.
\item $A=1$. $f'$ is defined on $(0,\pi/2)$ by $f'=1/\cos(\theta)$. Then
$f$ is defined on $(0,\pi/2)$ and takes a finite boundary value at $0$ and
diverges to $+\infty$ at $\pi/2$. 
\item $A>1$. $f'$ and $f$ are defined on $(0,\theta_1)$, with $\theta_1=
\arcsin(1/A)$. $f$ takes finite boundary values at $0$ and $\theta_1$ and
$\dis\frac{d f}{d\nu}(\theta_1)=+\infty$.
\end{enumerate}

\begin{figure}[h]
\begin{center}
\resizebox{0.8\linewidth}{!}{\input{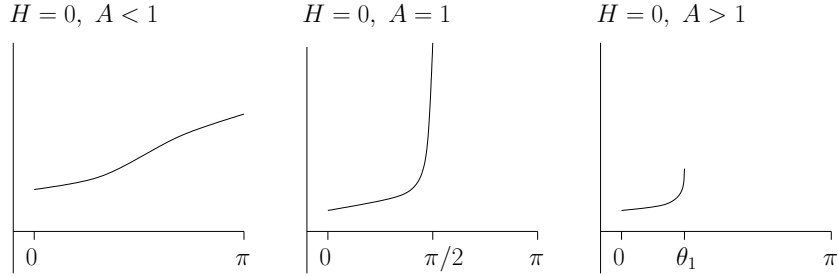}}
\caption{$H=0$ case}\label{H=0}
\end{center}
\end{figure}

Let us now study the case $H>0$. Equation \eqref{integre} can be written:
$$
\frac{\sin(\theta)f'}{\sqrt{ 1+\sin^2\theta\left|f'\right|^2}}
=-2H(\cos(\theta)-k\sin(\theta))
$$
where $2H k=A$ ($k\ge 0$). Then $f'$ is defined when
$|\cos(\theta)-k\sin(\theta)|<1/(2H)$ by
$$
f'(\theta)=\frac{-2Hg(\theta)}{\sin(\theta)\sqrt{1-4H^2g^2(\theta)}}
$$
We define $g(\theta)=\cos(\theta)-k\sin(\theta)$.
$g'(\theta)=-\sin(\theta)-k\cos(\theta)$, thus $g'(\theta)=0$ for
$\theta=\theta_0=\pi+\arctan(-k)$. We have $g(\theta_0)=-\sqrt{1+k^2}$.
The behaviour of $g$ is summarized in the following table.
$$
\begin{array}{|c|ccccr|}
\hline
&0&&\theta_0&&\pi\\
\hline
g'(\theta)&-k&-&0&+&k\\
\hline
&1&&&&-1\\
g&&\searrow &&\nearrow&\\
&&&-\sqrt{1+k^2}&&\\
\hline
\end{array}
$$

A. \underline{Case $H<1/2$} (Figure \ref{Hpetit}). There are three
sub-cases:
\begin{enumerate}
\item[A1.] $k<\sqrt{(1/2H)^2-1}$. $f'$ and $f$ are defined on $(0,\pi)$,
$u$ is an entire graph. $f$ takes boundary value $+\infty$ at $0$ and
$\pi$.
\item[A2.] $k=\sqrt{(1/2H)^2-1}$. $f'$ and $f$ are defined on
$(0,\theta_0)$
and $(\theta_0,\pi)$. $f$ takes boundary value $+\infty$ at $0$ and $\pi$,
$\lim_{{\theta_0}^-} f=+\infty$ and
$\lim_{{\theta_0}^+}f=-\infty$.
\item[A3.] $k>\sqrt{(1/2H)^2-1}$. There are $\theta_1$ and $\theta_2$ with
$0<\theta_1<\theta_0<\theta_2<\pi$ such that $f'$ and $f$ are defined on
$(0,\theta_1)$ and $(\theta_2,\pi)$. $f$ takes finite boundary value
at $\theta_1$ and $\theta_2$, $+\infty$ at $0$ and $\pi$,
$\dis\frac{d f}{d\nu}(\theta_1)=+\infty$ and $\dis\frac{d
f}{d\nu}(\theta_2)=-\infty$.
\end{enumerate}

\begin{figure}[h]
\begin{center}
\resizebox{0.8\linewidth}{!}{\input{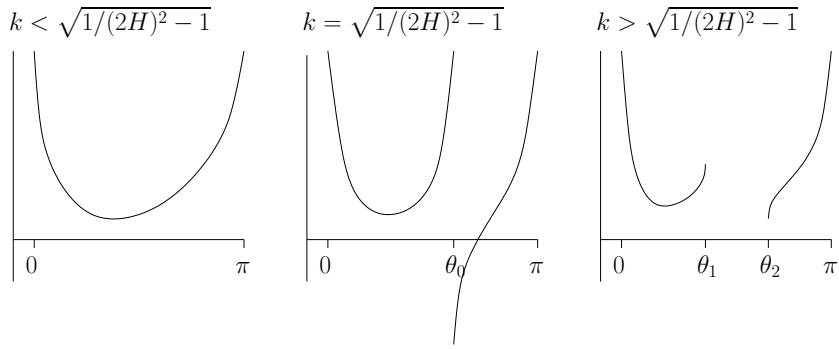}}
\caption{$H<1/2$ case}\label{Hpetit}
\end{center}
\end{figure}

B. \underline{Case $H=1/2$} (Figure \ref{Hgrand}). There are two subcases:
\begin{enumerate}
\item[B1.] $k=0$. $f'$ is defined on $(0,\pi)$ by $f'=\dis
-\frac{\cos(\theta)}{\sin^2(\theta)}$. Hence $f$ is defined on $(0,\pi)$ by
$f=\dis\frac{1}{\sin(\theta)}+K$: $f$ takes boundary value $+\infty$ at
$0$ and $\pi$.
\item[B2.] $k>0$. There is $\theta_1\in(0,\theta_0)$ such that $f'$ and $f$
are defined on $(0,\theta_1)$. $f$ takes finite boundary value at
$\theta_1$, $\dis\frac{d f}{d\nu}(\theta_1)=+\infty$ and boundary value
$+\infty$ at $0$.
\end{enumerate}

C. \underline{Case $H>1/2$} (Figure \ref{Hgrand}). There are $\theta_1$ and
$\theta_2$ with
$0<\theta_1<\theta_2<\theta_0$ such that $f'$ and $f$ are defined on
$(\theta_1,\theta_2)$. $f$ takes finite boundary value at $\theta_1$ and
$\theta_2$, $\dis\frac{d f}{d\nu}(\theta_1)=+\infty$ and $\dis\frac{d
f}{d\nu}(\theta_2)=+\infty$

\begin{figure}[h]
\begin{center}
\resizebox{0.8\linewidth}{!}{\input{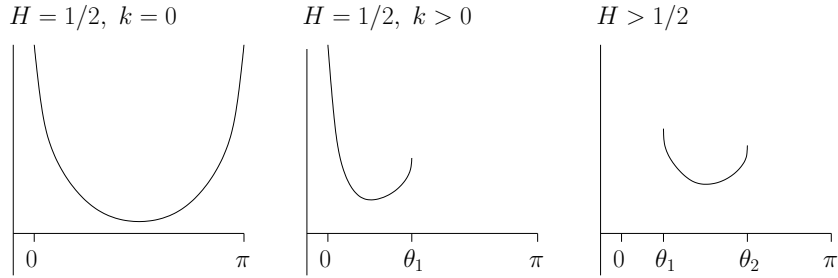}}
\caption{$H=1/2$ and $H>1/2$ cases}\label{Hgrand}
\end{center}
\end{figure}

%%%%%%%%%%%%%%%%%%%
%%%%%%%%%%%%%%%%%%%

\bibliographystyle{plain}
\bibliography{bill}

\begin{thebibliography}{10}

\bibitem{col2}
P.~Collin.
\newblock Deux exemples de graphes de courbure moyenne constante sur une bande
  de $\rth2$.
\newblock {\em C. R. Acad. Sci. Paris}, 311--I:539--542, 1990.

\bibitem{ck1}
P.~Collin and R.~Krust.
\newblock Le probleme de {D}irichlet pour l'equation des surfaces minimales sur
  des domaines non bornes.
\newblock {\em Bull. Soc. Math. France}, 120:101--120, 1991.

\bibitem{cor2}
P.~Collin and H.~Rosenberg.
\newblock Construction of harmonic diffeomorphisms and minimal graphs.
\newblock Preprint (arXiv: math.DG/0701547).

\bibitem{cor3}
P.~Collin and H.~Rosenberg.
\newblock The {J}enkins-{S}errin theorem for minimal graphs in homogeneous
  3-manifolds.
\newblock Preprint in preparation.

\bibitem{gt1}
D.~Gilbarg and N.~S. Trudinger.
\newblock {\em Elliptic partial differential equations of second order}.
\newblock Springer-Verlag, New York, 2nd edition, 1983.

\bibitem{hrs1}
L.~Hauswirth, H.~Rosenberg, and J.~Spruck.
\newblock Infinite boundary value problems for constant mean curvature graphs
  in {${\Bbb H}\sp 2\times {\Bbb R}$} and {${\Bbb S} \sp 2\times \Bbb R$}.
\newblock Preprint.

\bibitem{hrs2}
L.~Hauswirth, H.~Rosenberg, and J.~Spruck.
\newblock On complete mean curvature {$H=\frac{1}{2}$} surfaces in {${\Bbb
  H}^2\times{\Bbb R}$}.
\newblock Preprint.

\bibitem{jes1}
H.~Jenkins and J.~Serrin.
\newblock Variational problems of minimal surface type {II}. {B}oundary value
  problems for the minimal surface equation.
\newblock {\em Arch. Rational Mech. Anal.}, 21:321--342, 1966.

\bibitem{mazet0}
L.~Mazet.
\newblock {\em Construction de surfaces minimales par r\'esolution du probl\`
  eme de Dirichlet,}.
\newblock PhD thesis, Universit\'e Toulouse III Paul Sabatier, 2004.

\bibitem{mazet5}
L.~Mazet.
\newblock Lignes de divergence pour les graphes \`a courbure moyenne constante.
\newblock {\em Ann. Inst. H. Poincar\'e Anal. Non Lin\'eaire}, 24(5):757--771,
  2007.

\bibitem{NeRo2}
B.~Nelli and H.~Rosenberg.
\newblock Minimal surfaces in {${\Bbb H}^2\times{\Bbb R}$}.
\newblock {\em Bulletin of the Brazilian Mathematical Society}, 33(2):263--292,
  2002.

\bibitem{os1}
R.~Osserman.
\newblock {\em A Survey of Minimal Surfaces}.
\newblock Dover Publications, New York, 2nd edition, 1986.

\bibitem{pro2}
J.~P\'{e}rez and A.~Ros.
\newblock Properly embedded minimal surfaces with finite total curvature.
\newblock In {\em The Global Theory of Minimal Surfaces in Flat
  Spaces-{L}{N}{M}-1775}, pages 15--66. Springer-Verlag, 2002.
\newblock G. P. Pirola, editor. MR1901613.

\bibitem{pin1}
A.~L. Pinheiro.
\newblock A {J}enkins-{S}errin theorem in {${\Bbb M}^2\times{\Bbb R}$}.
\newblock to appear in Bull. Braz. Math. Soc.

\bibitem{e}
R.~{S}a Earp.
\newblock Parabolic and hyperbolic screw motion surfaces in $\hxr$.
\newblock {\em J. Aust. Math. Soc.}

\bibitem{sc3}
R.~Schoen.
\newblock {\em Estimates for Stable Minimal Surfaces in Three Dimensional
  Manifolds}, volume 103 of {\em Annals of Math. Studies}.
\newblock Princeton University Press, 1983.
\newblock MR0795231, Zbl 532.53042.

\bibitem{sp3}
J.~Spruck.
\newblock Infinite boundary value problems for surfaces of constant mean
  curvature.
\newblock {\em Arch. Rational Mech. Anal.}, 49:1--31, 1972/1973.

\bibitem{you}
R.~Younes.
\newblock {\em Surfaces minimales dans $\widetilde{PSL}(2,\mathbb{R})$}.
\newblock PhD thesis, Universit\'e de Tours.

\end{thebibliography}

%%%%%%%%%%%%%%%%%%%
%%%%%%%%%%%%%%%%%%%

\textsc{Laurent Mazet}

Universit\'e Paris-Est,

Laboratoire d'Analyse et Math\'ematiques Appliqu\'ees, UMR 8050

UFR de Sciences et technologies, D\'epartement de Math\'ematiques

61 avenue du g\'en\'eral de Gaulle 94010 Cr\'eteil cedex (France)

\texttt{laurent.mazet@univ-paris12.fr}

\medskip

\textsc{M. Magdalena Rodr\'\i guez}

Universidad Complutense de Madrid,

Departamento de \'Algebra

Plaza de las Ciencias, 3

28040 Madrid (Spain)

\texttt{magdalena@mat.ucm.es}

\medskip

\textsc{Harold Rosenberg}

Universit\'e Denis Diderot Paris 7,

Institut de Math\'ematiques de Jussieu, UMR 7586

UFR de Math\'ematiques, Equipe de G\'eom\'etrie et Dynamique

Site de Chevaleret

75205 Paris cedex 13 (France)

\texttt{rosen@math.jussieu.fr}
\end{document}